\newtheorem{theorem}{\sc Theorem}[section]
\newtheorem{proposition}[theorem]{\sc Proposition}
\newtheorem{lemma}[theorem]{\sc Lemma}
\newtheorem{corollary}[theorem]{\sc Corollary}
\theoremstyle{definition}
\newtheorem{definition}[theorem]{\sc Definition}
\newtheorem{example}[theorem]{\sc Example}
\theoremstyle{remark}
\newtheorem{remark}[theorem]{\sc Remark}
\newcommand{\tensor}[1]{\otimes_{\scriptscriptstyle{#1}}}
\newcommand{\cotensor}[1]{\, \square_{\scriptscriptstyle{#1}} \,}
\newcommand{\Sf}[1]{\mathsf{#1}}
\newcommand{\fk}[1]{\mathfrak{#1}}
\newcommand{\rmod}[1]{\Sf{Mod}_{#1}}
\renewcommand{\hom}[3]{\mathrm{Hom}_{\Sscript{#1}}\left(#2,\,#3\right)}
\newcommand{\cohom}[3]{\mathrm{Hom}^{\Sscript{#1}}\left(#2,\,#3\right)}
\newcommand{\td}[1]{\widetilde{#1}}
\newcommand{\bara}[1]{\overline{#1}}
\newcommand{\End}[2]{\mathrm{End}_{\Sscript{#1}}(#2)}
\newcommand{\CoEnd}[2]{\mathrm{End}^{\Sscript{#1}}(#2)}
\newcommand{\rcomod}[1]{ \mathsf{Comod}{}_{#1}}
\newcommand{\frcomod}[1]{ \mathsf{comod}{}_{#1}}
\newcommand{\can}[1]{\mathsf{can}_{#1}}
\newcommand{\LR}[1]{\left\{\underset{}{} #1 \right\}}
\newcommand{\B}[1]{\boldsymbol{#1}}
 \newcommand{\id}{\mathrm{Id}}
\newcommand{\Spec}[1]{{\rm Spec}(#1)}
\newcommand{\aA}{\mathscr{A}}
\newcommand{\bB}{\mathscr{B}}
\newcommand{\gG}{\mathscr{G}}
\newcommand{\hH}{\mathscr{H}}
\newcommand{\kK}{\mathscr{K}}
\newcommand{\lL}{\mathscr{L}}
\newcommand{\oO}{\mathscr{O}}
\newcommand{\pP}{\mathscr{P}}
\newcommand{\rR}{\mathscr{R}}
\newcommand{\sS}{\mathscr{S}}
\newcommand{\uU}{\mathscr{U}}
\newcommand{\cA}{{\mathcal A}}
\newcommand{\cG}{{\mathcal G}}
\newcommand{\cH}{{\mathcal H}}
\newcommand{\cJ}{{\mathcal J}}
\newcommand{\cK}{{\mathcal K}}
\newcommand{\cM}{{\mathcal M}}
\newcommand{\cR}{{\mathcal R}}
\newcommand{\cS}{{\mathcal S}}
\newcommand{\Sscript}[1]{\scriptscriptstyle{#1}}
\newcommand{\due}[3]{{}_{{#2 }} {#1}_{{ #3}}\,}
\newcommand{\Algk}{{\rm Alg}_{\Sscript{\Bbbk}}}
\newcommand{\Algl}{{\rm Alg}_{\Sscript{L}}}
\newcommand{\hHo}{\hH_{\Sscript{0}}}
\newcommand{\hHa}{\hH_{\Sscript{1}}}
\newcommand{\AL}{A_{\Sscript{L}}}
\newcommand{\HL}{\cH_{\Sscript{L}}}
\newcommand{\hHL}{\hH_{\Sscript{L}}}
\newcommand{\gGL}{\gG_{\Sscript{L}}}
\newcommand{\oOL}{\oO_{\Sscript{L}}}
\newcommand{\hHq}{\hH_{\Sscript{q}}}
\newcommand{\Hq}{\cH_{\Sscript{q}}}
\newcommand{\Hp}{\cH_{\Sscript{p}}}
\newcommand{\sL}{{\sf{s}}_{\Sscript{L}}}
\newcommand{\tL}{{\sf{t}}_{\Sscript{L}}}
\newcommand{\eL}{\varepsilon_{\Sscript{L}}}
\newcommand{\Lq}{L_{\Sscript{q}}}
\begin{document}
\allowdisplaybreaks

\title[On geometrically transitive Hopf algebroids]{On geometrically transitive Hopf algebroids}

\author{Laiachi El Kaoutit}
\address{Universidad de Granada. Departamento de \'{A}lgebra  y IEMath-Granada. Facultad de Educaci\'{o}n, Econon\'ia y Tecnolog\'ia de Ceuta. Cortadura del Valle, s/n. E-51001 Ceuta, Spain}
\email{kaoutit@ugr.es}
\urladdr{http://www.ugr.es/~kaoutit/}

\date{\today}
\subjclass[2010]{Primary  16D90, 16T15, 18B40, 18D35, 18D10; Secondary 14M17, 20L05, 58H05}
\thanks{Research supported by the Spanish Ministerio de Econom\'{i}a y Competitividad  and FEDER, grants MTM2016-77033-P and MTM2013-41992-P}

\begin{abstract}
This paper contributes to the characterization of a certain class of commutative Hopf algebroids. It is shown that a commutative flat Hopf algebroid with a non zero base ring and a nonempty character groupoid is geometrically transitive if and only if any base change morphism is a weak equivalence (in particular, if any extension of the base ring is Landweber exact), if and only if any trivial bundle is a principal  bi-bundle, and if and only if any two objects are fpqc locally isomorphic. As a consequence,  any  two isotropy Hopf algebras of a geometrically transitive Hopf algebroid (as above) are weakly equivalent. Furthermore, the character groupoid is transitive and any two isotropy Hopf algebras are conjugated. Several other characterizations of these Hopf algebroids in relation to transitive groupoids are also given.
\end{abstract}
\vspace{-1cm}

\keywords{Transitive groupoids; groupoid bisets; weak equivalences;  Landweber exactness; geometrically transitive Hopf algebroids; principal bi-bundles; isotropy Hopf algebras; character groupoid.}
\maketitle
\vspace{-1cm}
\begin{small}
\tableofcontents
\end{small}

\pagestyle{headings}

\vspace{-1.4cm}
\section{Introduction}
\subsection{Motivation and overview}\label{ssec:1}   
A commutative Hopf algebroid can be thought as an  affine groupoid scheme, that is,  a groupoid scheme \cite[D\'efinition page 299]{DemGab:GATIGAGGC} in which the schemes defining objects and morphisms are affine schemes. In other words, this is a representable presheaf of groupoids in the category of affine schemes, or a  prestack of groupoids whose "stackification" leads  to a stack in the \emph{fpqc} (fid\`element plate et quasi-compacte)  topology. 
For instance, an action of an affine group scheme  on an affine scheme leads to an affine groupoid scheme which gives rise (by passage to the coordinate rings) to a commutative Hopf algebroid, known as \emph{split Hopf algebroid} (\cite[Appendix A.1]{Ravenel:1986}, see also \cite{Landweber:1973}). More examples  of commutative Hopf algebroids can be performed using Set-theoretically  constructions in groupoids.

Hopf algebroids in relation with groupoids are fundamental objects in both algebraic topology and algebraic geometry. They appear in the study of stable homotopy theory \cite{Ravenel:1986,HovStr:CALEHT, Naumann:07,Powell:2008} (see also the references therein), and prove to be very useful in  studying quotients of preschemes, prestacks of groupoids over affine schemes as well as (commutative) Tannakian categories \cite{Gabriel:LNM151, Deligne:1990, Bruguieres:1994, Alonso/all:2014, Laumon/Moret-Bailly:2002}.  

As in the case of affine group schemes \cite{DemGab:GATIGAGGC},   
several constructions and results on groupoids  have  a certain geometric meaning in presheaves of groupoids and then a possible  algebraic interpretation at the level of  Hopf algebroids.  In this way,  Hopf algebroids are better understood  when looking to  classical results in groupoids, or  by mimicking well-known results on classical geometric groupoids, e.g.~ topological or Lie groupoids.  
The main motivation  of this paper fits into these lines of research. Specifically,  we  study a class of commutative Hopf algebroids called \emph{geometrically transitive} (see below),  by means of transitive  groupoids and their properties, obtaining in this way  several new characterizations of these Hopf algebroids. Besides,  much of the properties of transitive groupoids hereby developed and used in the study of Hopf algebroids   can be also seen as a contribution to the theory of groupoids.

The notion of transitivity varies depending on the context. In groupoid theory,  a  (small) groupoid  is said to be \emph{transitive} when the cartesian  product of the source and the target is a surjective map. A Lie groupoid is called \emph{locally trivial} (or a \emph{transitive Lie groupoid}), when this map is a surjective submersion \cite{Mackenzie:2005, Cartier:2008}. For  groupoid schemes, the meaning of the abstract notion of transitivity was introduced by Deligne in \cite[page 114]{Deligne:1990}. More precisely,   a groupoid scheme  is \emph{transitive} in the fpqc topology sense if the morphism constituted by the fibred product of the source and the target  is a cover in this topology.  
In  \cite[D\'efinition page 5850]{Bruguieres:1994}, Brugui\`eres introduced a  class of  Hopf algebroids referred to as \emph{geometrically transitive}, where he showed (\cite[Th\'eor\`eme 8.2 page 5858]{Bruguieres:1994}) that in the commutative case (the case which we are interested in) these are Hopf algebroids whose associated affine groupoid schemes are transitive in the fpqc sense. It is also implicitly shown in \cite{Bruguieres:1994} that a commutative Hopf algebroid is geometrically transitive  if and only if the unit map (i.e., the tensor product of the source and the target) is a faithfully flat extension.  This, in fact, can be thought of as a proper definition of geometrically transitive (GT for short) commutative  Hopf algebroids. Nevertheless, we will use here the original definition of \cite{Bruguieres:1994} (see Definition \ref{def:geo-tran} below) and show using elementary methods that the faithful flatness of the unit characterizes in fact GT  Hopf algebroids with nonempty character groupoids (see Theorem \ref{thm:AI} below).

Transitive groupoids are also characterized by the fact that any two objects are isomorphic, or equivalently: a groupoid with only one connected component, or  \emph{connected groupoid} in the terminology of \cite{Higgins:1971}.  From the geometric point of view, that is,  for presheaves of groupoids, this means that any two objects are locally isomorphic in the fpqc topology, see \cite[Proposition 3.3]{Deligne:1990}. At the level of Hopf algebroids, this property can be directly  expressed in terms of faithfully flat extensions (see Definition \ref{def:1} below), which in turn characterizes  GT Hopf algebroids with nonempty character groupoids, as we will see in the sequel by using elementary (algebraic) arguments.  Our methods, in fact, lead us also to recover other results on equivalences of categories stated in \cite[\S 3.5]{Deligne:1990} (see the paragraph after Theorem \ref{thm:AI}).

From their very definition one can then see that GT  Hopf algebroids can be understood as a natural algebro-geometric substitute of transitive groupoids. Under this point of view, it is reasonable to expect that most of the properties or characterizations of transitive groupoids could have an analogous counterpart at the level of GT Hopf algebroids.  However, apart from the basic definition,  
there are still several characterizations of transitive groupoids which, up to our knowledge,  are not known for GT  Hopf algebroids. In the following, we describe the two most interesting  of these characterizations.

A perhaps well-known result (see Proposition \ref{prop:grpd} for details) says that a groupoid $\gG$: $\xymatrix@C=35pt{G_{\Sscript{1}}\ar@<0.80ex>@{->}|-{\scriptscriptstyle{\sf{s}}}[r] \ar@<-0.80ex>@{->}|-{\scriptscriptstyle{\sf{t}}}[r] & \ar@{->}|-{ \scriptscriptstyle{\iota}}[l]G_{\Sscript{0}}}$
is  transitive if and only if for any map $\varsigma: X \to G_{\Sscript{0}}$ the induced morphism of groupoids $\gG^{\Sscript{\varsigma}} \to \gG$ is a weak equivalence (i.e., an essentially surjective and fully faithful functor), where $\gG^{\Sscript{\varsigma}}$ is the \emph{induced groupoid} 
whose set of objects is  $X$ and its set of arrows is  the fibred product $ X \,  \due \times {\Sscript{\varsigma}} {\, \Sscript{\Sf{t}}} \, G_{\Sscript{1}} \,  \due \times {\Sscript{\Sf{s}}} {\, \Sscript{\varsigma}}  \, X $, that is, $\gG^{\Sscript{\varsigma}}$ is the pull-back groupoid of $\gG$ along $\varsigma$ (see \cite[\S 2.3]{Mackenzie:2005} and \cite{Higgins:1971} for a dual construction). 
Another more interesting and perhaps not yet known characterization of the transitivity is by means of \emph{principal groupoid-bisets}; for the precise definition see Definitions  \ref{def:Gset},   \ref{def:biset}, and \ref{def:pbset}.   This notion  is in fact an abstract formulation of the notion of \emph{principal bi-bundles} in the context of topological and Lie groupoids \cite{Moedijk/Mrcun:2005, Jelenc:2013}, or that of \emph{bi-torsors} in sheaf theory \cite{DemGab:GATIGAGGC, Giraud:1971},  which is  of course based on the natural generalization of the notion of group-bisets \cite{Bouc:2010} to the context of groupoids. The aforementioned characterization can be expressed as follows: a groupoid $\gG$ is transitive if and only if  for any map $\varsigma: X \to G_{\Sscript{0}}$ the pull-back groupoid-biset $ G_{\Sscript{1}} \,\due \times {\Sscript{\Sf{s}}} {\, \Sscript{\varsigma}}  \, X $ is a principal $(\gG,\gG^{\Sscript{\varsigma}})$-biset, see again Proposition \ref{prop:grpd}.  

The  main aim of this paper is to investigate GT  Hopf algebroids by means of transitive groupoids. Our aim is in part to  see how the previous characterizations of  transitive groupoids can be transferred,  by means of weak equivalences and principal groupoids-bisets, to the commutative Hopf algebroid framework.

\subsection{Description of the main results}\label{ssec:raroes} Let $\Bbbk$ be the ground field. The term algebra in the following stands for a commutative $\Bbbk$-algebra, as usual the unadorned tensor product $\tensor{}$ denotes the tensor product over $\Bbbk$. 

Our main result is summarized in the following theorem, which includes Theorem \ref{thm:A} below:

{\renewcommand{\thetheorem}{{\bf A}}
\begin{theorem}\label{thm:AI}
Let $(A,\cH)$ be a commutative flat Hopf algebroid over $\Bbbk$ with $A\neq 0$ and denote by $\hH$ its associated presheaf of groupoids. Assume that $\hHo(\Bbbk) \neq \emptyset$.  Then the following are equivalent:
\begin{enumerate}[(i)]
\item $\etaup=\Sf{s}\tensor{}\Sf{t}: A\tensor{}A \to \cH$ is a  faithfully flat extension;
\item Any two objects of $\mathscr{H}$ are fpqc locally isomorphic (see Definition \ref{def:1});
\item For any extension $\phi: A \to B$, the  extension $\alpha: A \to \cH_{\Sscript{\Sf{t}}}\tensor{A}{}_{\Sscript{\phi}}B$, $a \mapsto \Sf{s}(a)\tensor{A}1_{\Sscript{B}}$ is  faithfully flat;
\item $(A,\cH)$ is geometrically transitive (see Definition \ref{def:geo-tran});
\item For any extension $\phi: A \to B$, the canonical morphism 
${\B \phi}:(A,\cH) \to (B,\cH_{\Sscript{\phi}})$ of Hopf algebroids is a weak equivalence, that is, the 
induced functor $\phi_{*}: \rcomod{\cH} \to \rcomod{\cH_{\Sscript{\phi}}}$  is an equivalence of  symmetric monoidal $\Bbbk$-linear categories of comodules, where $\cH_{\Sscript{\phi}}=B\tensor{A}\cH\tensor{A}B$;
\item For any extension $\phi: A \to B$,  the trivial principal left $(\cH, \cH_{\Sscript{\phi}})$-bundle $\cH\tensor{A}B$ is a principal bi-bundle (see subsection \ref{ssec:W}).
\end{enumerate} 
\end{theorem}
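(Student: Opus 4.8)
The plan is to arrange the six statements into a single cycle pivoting on the faithful flatness of the unit, proving $(i)\Rightarrow(iii)\Rightarrow(vi)\Rightarrow(v)\Rightarrow(iv)\Rightarrow(i)$, and then adjoining the separate equivalence $(i)\Leftrightarrow(ii)$. The implication $(i)\Rightarrow(iii)$ is the computational core that forces \emph{arbitrary} base change to behave well. Factoring the extension as $\mathrm{id}\tensor{}\phi\colon A\tensor{}A\to A\tensor{}B$, I would identify $\cH_{\Sscript{\Sf{t}}}\tensor{A}{}_{\Sscript{\phi}}B$ with the pushout $\cH\tensor{A\tensor{}A}(A\tensor{}B)$ along $\Sf{s}\tensor{}\Sf{t}$ and $\mathrm{id}\tensor{}\phi$; under this identification $\alpha$ is the composite of $A\to A\tensor{}B$, $a\mapsto a\tensor{}1_{\Sscript{B}}$, with the base change of $\Sf{s}\tensor{}\Sf{t}$ along $\mathrm{id}\tensor{}\phi$. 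Base change of a faithfully flat map is faithfully flat, and $A\to A\tensor{}B$ is faithfully flat as soon as $B\neq 0$, so $\alpha$ is faithfully flat; crucially this uses only $(i)$ and never any flatness of $\phi$ itself.

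Next, $(iii)\Leftrightarrow(vi)$ should be a matter of unwinding definitions. The object $\cH_{\Sscript{\Sf{t}}}\tensor{A}{}_{\Sscript{\phi}}B$ of $(iii)$ is precisely the trivial principal left $(\cH,\cH_{\Sscript{\phi}})$-bundle $\cH\tensor{A}B$ of $(vi)$; it is automatically principal on the left because $\cH$ is $A$-flat, and its upgrade to a principal bi-bundle amounts exactly to faithful flatness of the remaining structure map, namely the map $\alpha$. Then $(vi)\Rightarrow(v)$ is the comodule-level Morita statement developed in the bundle sections: a principal bi-bundle between flat Hopf algebroids induces an equivalence of the symmetric monoidal comodule categories, so $\phi_{*}\colon\rcomod{\cH}\to\rcomod{\cH_{\Sscript{\phi}}}$ is an equivalence. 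This step is the Hopf-algebroid transfer of the groupoid correspondence between principal bisets and weak equivalences recalled in the introduction.

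For $(v)\Rightarrow(iv)$ I would finally invoke the standing hypothesis $\hH_{\Sscript{0}}(\Bbbk)\neq\emptyset$: choose a character $x\colon A\to\Bbbk$ and apply $(v)$ to $\phi=x$. The base-changed object $\cH_{\Sscript{x}}=\Bbbk\tensor{A}\cH\tensor{A}\Bbbk$ is an ordinary Hopf algebra over $\Bbbk$, so $\rcomod{\cH}$ is equivalent, as a symmetric monoidal category, to the comodules over a Hopf algebra. Since geometric transitivity is a property of the comodule category, hence invariant under weak equivalence, and every nonzero Hopf algebra over a field is trivially geometrically transitive, $(A,\cH)$ is geometrically transitive, which is $(iv)$. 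The closing $(iv)\Rightarrow(i)$ is the anchor extracting faithful flatness of $\Sf{s}\tensor{}\Sf{t}$ from Brugui\`eres' definition, and it is here that the bare $A$-flatness in the hypotheses must be upgraded to faithful flatness over $A\tensor{}A$. Finally $(i)\Leftrightarrow(ii)$ follows from the universal property of $\cH$: the two projections $A\to A\tensor{}A$ form the universal pair of objects, they are locally isomorphic exactly when the universal arrow $\Sf{s}\tensor{}\Sf{t}\colon A\tensor{}A\to\cH$ is a faithfully flat cover, and any pair of objects over an arbitrary base is pulled back from this one.

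The step I expect to be the main obstacle is the block $(iii)\Leftrightarrow(vi)\Rightarrow(v)$: making the Morita dictionary between principal bi-bundles and weak equivalences precise, and verifying that the bi-principality of the trivial bundle is faithfully identified with the single faithful flatness condition on $\alpha$. This demands careful bookkeeping of the two $A$-module structures on $\cH$ and of the competing left and right coactions, and it is the genuinely new input beyond classical faithfully flat descent. A secondary delicate point is the weak-equivalence invariance of geometric transitivity used in $(v)\Rightarrow(iv)$ together with the anchor $(iv)\Rightarrow(i)$; by contrast the descent computation $(i)\Rightarrow(iii)$ and the Yoneda argument $(i)\Leftrightarrow(ii)$ are routine.
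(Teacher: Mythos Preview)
Your cycle $(i)\Rightarrow(iii)\Rightarrow(vi)\Rightarrow(v)\Rightarrow(iv)\Rightarrow(i)$ is essentially the paper's route, and your shortcut $(i)\Rightarrow(iii)$ via the pushout identification $\cH_{\Sf{t}}\tensor{A}{}_{\Sscript{\phi}}B\cong\cH\tensor{A\otimes A}(A\tensor{}B)$ is correct and in fact cleaner than the paper, which reaches $(iii)$ only through $(ii)$. The block $(iii)\Leftrightarrow(v)\Leftrightarrow(vi)$ that you flag as the main obstacle is precisely \cite[Proposition~4.1]{Kaoutit/Kowalzig:14}, which the paper simply cites; there is no new difficulty there.

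The genuine gap is your claimed ``routine'' implication $(ii)\Rightarrow(i)$. Applying $(ii)$ to the universal pair $\iota_1,\iota_2\in A(A\tensor{}A)$ yields a faithfully flat $p\colon A\tensor{}A\to C'$ and $g\colon\cH\to C'$ with $g\circ\etaup=p$. This shows $\etaup$ is conservative (if $M\tensor{A\otimes A}\cH=0$ then $M\tensor{A\otimes A}C'=0$, hence $M=0$), but it does \emph{not} show $\etaup$ is flat: the standing hypothesis is only that $\cH$ is flat over $A$ via $\Sf{s}$ (equivalently $\Sf{t}$), not over $A\tensor{}A$ via $\etaup$, and a factorisation of a faithfully flat map through $\etaup$ cannot manufacture flatness of $\etaup$. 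The paper does not attempt $(ii)\Rightarrow(i)$ directly; instead it proves $(ii)\Rightarrow(iii)$ by a substantive argument showing that \emph{every} right $\cH$-comodule is faithfully flat as an $A$-module (using local isomorphism to compare fibres and the hypothesis $A(\Bbbk)\neq\emptyset$ to anchor to a field), and then applies this to the comodule $\cH\tensor{A}B$. You should either reproduce that argument or reroute $(ii)$ into your cycle through $(iii)$ rather than $(i)$.

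A secondary point you correctly flag but underestimate: in $(v)\Rightarrow(iv)$, condition $(GT1)$ --- projectivity of $\cH$ over $A\tensor{}A$ --- is not visibly a property of the symmetric monoidal category $\rcomod{\cH}$ alone, so ``GT is weak-equivalence invariant'' is not automatic. The paper handles this by passing to the tensor product Hopf algebroid $(A\tensor{}A,\cH^{\Sscript{o}}\tensor{}\cH)$, observing that $\Sf{x}^{\Sscript{o}}\tensor{}\Sf{x}$ is again a weak equivalence, and then deducing that every $(\cH^{\Sscript{o}}\tensor{}\cH)$-comodule --- in particular $\cH$ itself --- is projective over $A\tensor{}A$ via \cite[Proposition~5.1(ii)]{Bruguieres:1994}. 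The step $(iv)\Rightarrow(i)$ is likewise not a formality: it again uses the tensor product Hopf algebroid together with Lemma~\ref{lema:MN} to verify $(GT3)$ for $(A\tensor{}A,\cH^{\Sscript{o}}\tensor{}\cH)$ and then invokes Lemma~\ref{lema:1-2}(b).
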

}

The flatness is to ensure that the categories of comodules of the involved Hopf algebroids are Grothendieck ones with exact forgetful functor to the category of modules over the base algebra. As for the hypothesis $A\neq 0$, one of the conditions  in Definition  \ref{def:geo-tran} below says that the endomorphisms ring of $A$ viewed as a comodule should coincides with $\Bbbk$, so it is reasonable to ask $A$ to be a non zero object.  

The examples of Hopf algebroids we have in mind and satisfy the assumptions of Theorem \ref{thm:AI}, are the ones  which can be obtained from Tannaka's reconstruction process, by using  $\Bbbk$-linear representations of a (small) groupoid $\gG$ and taking $A=Map(G_{\Sscript{0}}, \Bbbk)$, the set of all maps from $G_{\Sscript{0}}$ to $\Bbbk$, as the base algebra. 

The proof of Theorem \ref{thm:AI} is done by showing the implications $(i) \Rightarrow (ii) \Rightarrow (iii) \Rightarrow (iv) \Rightarrow (i)$, and using the equivalences $(iii)\Leftrightarrow (v) \Leftrightarrow(vi)$ from \cite[Proposition 5.1]{Kaoutit/Kowalzig:14}. 
The assumption $\hHo(\Bbbk) \neq \emptyset$ which ensures that the character groupoid $\hH(\Bbbk)$ is nonempty,  is needed to prove the implication $(iii) \Rightarrow (iv)$. Although, the implication $(ii) \Rightarrow (iii)$ can be shown  under the weaker assumption $\hHo(L) \neq \emptyset$ for some field extension $L$ of $\Bbbk$. Moreover, the equivalent conditions of Theorem \ref{thm:AI} are stable under change of scalars for the class of Hopf algebroids  with nonempty character groupoids (i.e.,  with $\hHo(\Bbbk) \neq \emptyset$). Indeed, if  $L$ is a field extension of $\Bbbk$ and $(A, \cH)$ is a Hopf algebroid which verifies the assumptions of Theorem \ref{thm:AI} and satisfies one of  these conditions say $(i)$, then the Hopf algebroid  $(\AL, \HL)=(A\tensor{\Bbbk}L, \cH\tensor{\Bbbk}L)$ satisfies this condition as well. However, if  $\hHo(\Bbbk) = \emptyset$ and there is a field extension $L$ such that $\hHo(L) \neq \emptyset$, then it is not clear to us how to show the implication $(iii) \Rightarrow (iv)$,   see Remarks \ref{rem:Lkk} and \ref{rem:Lk} for more comments on the change of scalars.

As we have mention before, saying that $(A,\cH)$ is GT Hopf algebroid is equivalent to say that the $\Bbbk$-groupoid  $\hHa$ acts transitively on $\hHo$ in the fpqc sense. Thus, by Theorem  \ref{thm:AI}, this  can be now easily deduced by comparing condition $(i)$ with the Definition of \cite[\S 1.6]{Deligne:1990}.  On the other hand, there is also a notion of transitivity for Hopf algebroids \cite[D\'efinition page 5850]{Bruguieres:1994} and as was shown in \cite[Proposition 7.3 page 5851]{Bruguieres:1994}, a Hopf algebroid $(A,\cH)$ is geometrically transitive if and only if $(\AL,\HL)$ is transitive for any field extension $L$ of $\Bbbk$. Perhaps this justifies the terminology, although it is not clear, at least to us,  how to express this transitivity at the level of the associated presheaves of groupoids with respect to a certain topology,  see  Remark  \ref{rem:Transitif} for more details. 

Condition $(v)$ in Theorem \ref{thm:AI} implies in particular that any algebra extension $B$ is \emph{Landweber exact} over $A$ in the sense of \cite[Definition 2.1]{HovStr:CALEHT} and shows  that certain GT Hopf algebroids do not have a non trivial hereditary torsion theory in the sense of \cite[Theorem A]{HovStr:CALEHT}.  On the other hand, following the notation of \cite[\S 3.5]{Deligne:1990}, we know that the category of comodules $\rcomod{\cH}$ is canonically equivalent, as  a symmetric monoidal $\Bbbk$-linear category, to the category of linear representations ${\rm Rep}(\hHo:\hHa)$ of the associated affine presheaf of groupoids. So, up to these canonical equivalences, condition $(v)$ gives the "affine version" of the equivalence of categories stated in \cite[(3.5.1) page 130]{Deligne:1990}. Furthermore, Theorem \ref{thm:AI} shows that for affine $\Bbbk$-schemes  with the induced fpqc topology, the equivalence of categories  stated in \emph{op.cit.},  is not only a necessary condition to have a transitive action (for the class of affine $\Bbbk$-groupoids $\hH$ with $\hHo(\Bbbk) \neq \emptyset$), but also a sufficient one. Thus we obtain a new characterization of these transitive affine $\Bbbk$-groupoids.

The fact that transitive groupoids are characterized by the  conjugacy of  their isotropy groups, and the analogue of this characterization in the Hopf algebroid context  also  attracted our attention. More precisely, given a Hopf algebroid $(A,\cH)$ and denote by $\hH(C)$ the fiber of $\hH$ at a  commutative algebra $C$, that is, the groupoid with set of objects $\hHo(C)=\Algk(A, C)$ and set of arrows $\hHa(C)=\Algk(\cH,C)$ (see equation \eqref{Eq:miacosa} below). Assume as above that  the character groupoid $\hH(\Bbbk)$ in nonempty (i.e., $\hH_{\Sscript{0}}(\Bbbk) \neq \emptyset$), then for any object $x: A \to \Bbbk$, there is a presheaf  of sets which assigns to each  algebra $C$, the isotropy group of the object $x^*(1_{\Sscript{C}}) \in \hH_{\Sscript{0}}(C)$, where $1_{\Sscript{C}}: \Bbbk \to C$ denotes the unit of $C$.
It turns out that this is an affine group scheme represented by the Hopf algebra  $(\Bbbk_{\Sscript{x}}, \cH_{\Sscript{x}})$ which is the base change of $(A,\cH)$ by the algebra map $x$ (here $\Bbbk_{\Sscript{x}}$ denotes $\Bbbk$ viewed as an $A$-algebra via $x$, and $\cH_{\Sscript{x}}=\Bbbk_{\Sscript{x}}\tensor{A}\cH\tensor{A}\Bbbk_{\Sscript{x}}$).  
The  pair $(\Bbbk_{\Sscript{x}}, \cH_{\Sscript{x}})$ is refereed to as the \emph{isotropy Hopf algebra} of $(A,\cH)$  at (the point) $x$. Now, recall from \cite{HovStr:CALEHT} that  two flat Hopf algebroids $(A, \cH)$  and $(B,\cK)$ are weakly equivalent  if there is a diagram of weak equivalences 
$$
\xymatrix@R=7pt{  &  (C, \cJ) & \\ (A, \cH) \ar@{->}[ru] & & \ar@{->}[lu] (B,\cK),}
$$ 
see Subsection \ref{ssec:W} for more details. 
The fact that two isotropy groups of  a transitive groupoid are isomorphic is  translated to the fact that  two isotropy Hopf algebras of a GT Hopf algebroid are weakly equivalent; of course, Hopf algebras are considered here as Hopf algebroids where source and target coincide.

This result is part of the subsequent corollary of Theorem \ref{thm:AI} which contains both  Proposition \ref{prop:weak-isotropy} and Corollary \ref{coro:DFR}; the last statement in part \emph{(2)} below follows from Propositions  \ref{prop:GT}\emph{(GT13)} and \ref{prop:dualizable}.

{\renewcommand{\thetheorem}{{\bf A}}
\begin{corollary}  
Let $(A,\cH)$ be a flat Hopf algebroid as in Theorem \ref{thm:AI} with $\hH_{\Sscript{0}}(\Bbbk) \neq \emptyset$. Assume that $(A,\cH)$ is  geometrically transitive. Then 
\begin{enumerate}[(1)]
\item Any two isotropy Hopf algebras are weakly equivalent.
\item Any dualizable (right) $\cH$-comodule is locally free of constant rank. Moreover, any right $\cH$-comodule is an inductive limit of dualizable right $\cH$-comodules.
\end{enumerate}
\end{corollary}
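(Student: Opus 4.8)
The two statements are obtained by specialising Theorem~\ref{thm:AI} to the characters of $(A,\cH)$. Since $\hH_{\Sscript 0}(\Bbbk)=\mathrm{Alg}_{\Sscript\Bbbk}(A,\Bbbk)\neq\emptyset$ we may fix a character $x:A\to\Bbbk$; the associated base change $(\Bbbk_{\Sscript x},\cH_{\Sscript x})$ is the isotropy Hopf algebra at $x$ and, having base ring $\Bbbk$, is an ordinary commutative Hopf algebra (an affine $\Bbbk$-group scheme). The main input is item~(v): for \emph{every} extension $\phi:A\to B$ the morphism $\B\phi:(A,\cH)\to(B,\cH_{\Sscript\phi})$ is a weak equivalence, so that $\phi_{*}:\rcomod{\cH}\to\rcomod{\cH_{\Sscript\phi}}$ is an equivalence of symmetric monoidal categories.

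For part~(1), take two characters $x,y:A\to\Bbbk$ and apply item~(v) with $\phi=x$ and with $\phi=y$. This produces weak equivalences $\B x:(A,\cH)\to(\Bbbk_{\Sscript x},\cH_{\Sscript x})$ and $\B y:(A,\cH)\to(\Bbbk_{\Sscript y},\cH_{\Sscript y})$, i.e.\ a zig-zag
\[
\xymatrix@R=14pt@C=26pt{ & (A,\cH)\ar@{->}[dl]_-{\B x}\ar@{->}[dr]^-{\B y} & \\ (\Bbbk_{\Sscript x},\cH_{\Sscript x}) & & (\Bbbk_{\Sscript y},\cH_{\Sscript y}).}
\]
Because the relation generated by weak equivalences of flat Hopf algebroids is an equivalence relation (see \cite{HovStr:CALEHT} and subsection~\ref{ssec:W}), this zig-zag exhibits the two isotropy Hopf algebras as weakly equivalent, with $(A,\cH)$ as the connecting term. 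This is Proposition~\ref{prop:weak-isotropy}; the only non-formal ingredient is the symmetry and transitivity of weak equivalence, which we import from \cite{HovStr:CALEHT}.

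For the first assertion of part~(2), let $M$ be a dualizable right $\cH$-comodule. The forgetful functor $\rcomod{\cH}\to\rmod{A}$ is strong symmetric monoidal, so it sends $M$ to a dualizable $A$-module; hence the underlying module of $M$ is finitely generated projective and has a well-defined (locally constant) rank function $\mathrm{rk}(M):\Spec{A}\to\mathbb{Z}$. On the other hand, by faithful flatness of $\cH$ over $A$ and the antipode, the coaction of $M$ is equivalent to an isomorphism of $\cH$-modules $\Sf s^{*}M\cong\Sf t^{*}M$ between the pull-backs of $M$ along the source and the target. Comparing fibres at a prime $\fk q\in\Spec{\cH}$ gives $\mathrm{rk}_{\Sf s^{-1}(\fk q)}(M)=\mathrm{rk}_{\Sf t^{-1}(\fk q)}(M)$. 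Now $(A,\cH)$ is geometrically transitive, so by Theorem~\ref{thm:AI}(i) the unit $\Sf s\tensor{}\Sf t:A\tensor{}A\to\cH$ is faithfully flat and $\Spec{\cH}\to\Spec{A\tensor{}A}$ is surjective. Given any two primes $\fk p_{1},\fk p_{2}\in\Spec{A}$, the ring $\kappa(\fk p_{1})\tensor{}\kappa(\fk p_{2})$ is non-zero, so it has a prime, whose contraction is a prime $\fk P\subset A\tensor{}A$ with $u_{1}^{-1}(\fk P)=\fk p_{1}$ and $u_{2}^{-1}(\fk P)=\fk p_{2}$ (here $u_{1},u_{2}:A\to A\tensor{}A$ are the two coprojections); lifting $\fk P$ to some $\fk q\in\Spec{\cH}$ yields $\Sf s^{-1}(\fk q)=\fk p_{1}$ and $\Sf t^{-1}(\fk q)=\fk p_{2}$. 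Therefore $\mathrm{rk}_{\fk p_{1}}(M)=\mathrm{rk}_{\fk p_{2}}(M)$ for all pairs of primes, i.e.\ $\mathrm{rk}(M)$ is constant, and $M$ is locally free of constant rank.

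For the second assertion of part~(2), specialise item~(v) to $\phi=x$: the equivalence $x_{*}:\rcomod{\cH}\xrightarrow{\ \simeq\ }\rcomod{\cH_{\Sscript x}}$ identifies $\cH$-comodules with comodules over the Hopf algebra $\cH_{\Sscript x}$ over the field $\Bbbk$. Over a field the fundamental theorem of comodules writes every $\cH_{\Sscript x}$-comodule as the filtered union of its finite-dimensional subcomodules, and a finite-dimensional comodule over a Hopf algebra is dualizable, its dual being the linear dual with coaction twisted by the antipode. Since an equivalence of categories preserves filtered colimits and, being symmetric monoidal, preserves dualizable objects, transporting back along $x_{*}^{-1}$ shows that every right $\cH$-comodule is an inductive limit of dualizable right $\cH$-comodules. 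Together with the previous paragraph this is Corollary~\ref{coro:DFR}. The one genuinely non-formal step is the constant-rank argument above, valid in every characteristic thanks to the surjectivity of $\Spec{\cH}\to\Spec{A\tensor{}A}$; everything else is a transport of standard facts across the equivalences provided by Theorem~\ref{thm:AI}(v).
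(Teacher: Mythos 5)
Your part (1) contains a genuine gap at its last step. In this paper, ``weakly equivalent'' is \emph{not} defined as the equivalence relation generated by weak equivalences: by the definition recalled in subsection~\ref{ssec:W}, the Hopf algebras $(\Bbbk_{\Sscript{x}},\cH_{\Sscript{x}})$ and $(\Bbbk_{\Sscript{y}},\cH_{\Sscript{y}})$ are weakly equivalent only if there exist a third Hopf algebroid $(C,\cJ)$ and weak equivalences $(\Bbbk_{\Sscript{x}},\cH_{\Sscript{x}})\to (C,\cJ)$ and $(\Bbbk_{\Sscript{y}},\cH_{\Sscript{y}})\to (C,\cJ)$ pointing \emph{into} the common term. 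Your zig-zag $(\Bbbk_{\Sscript{x}},\cH_{\Sscript{x}})\leftarrow (A,\cH)\rightarrow (\Bbbk_{\Sscript{y}},\cH_{\Sscript{y}})$ has both arrows pointing \emph{out of} the middle object, so $(A,\cH)$ cannot serve as ``the connecting term''; you have a span where a cospan is required. Passing from the one to the other is precisely the non-formal content here: the paper's proof of Proposition~\ref{prop:weak-isotropy} composes the symmetric monoidal equivalences $\rcomod{\cH_{\Sscript{x}}}\simeq \rcomod{\cH}\simeq \rcomod{\cH_{\Sscript{y}}}$ and then invokes \cite[Theorem A]{Kaoutit/Kowalzig:14}, which asserts that flat Hopf algebroids with symmetric monoidally equivalent comodule categories are weakly equivalent; the explicit cospan is only produced later, in Proposition~\ref{prop:tst}, via the two-sided translation Hopf algebroid $(P_{\Sscript{x,y}},\cH_{\Sscript{x,y}})$. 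The fact you appeal to --- that being connected by a zig-zag of weak equivalences already implies being weakly equivalent in the cospan sense --- is proved neither in \cite{HovStr:CALEHT} nor in subsection~\ref{ssec:W}; asserting it amounts to assuming the very theorem of \cite{Kaoutit/Kowalzig:14} that the paper cites. To repair part (1), either quote \cite[Theorem A]{Kaoutit/Kowalzig:14} at this point or construct the cospan directly.

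Part (2) is correct, and both halves take routes genuinely different from the paper's. For the constant-rank statement, the paper combines local isomorphy of any two objects (condition (ii) of Theorem~\ref{thm:A}) with faithfully flat descent of the rank function (Lemma~\ref{lema: DFR} and Proposition~\ref{prop:DFR}) and then specializes one object to a character $y\in A(\Bbbk)$; you instead use the canonical isomorphism between the base changes of $M$ along $\Sf{s}$ and $\Sf{t}$ (built from the coaction and the antipode --- no flatness is actually needed for this) together with the surjectivity of $\Spec{\cH}\to \Spec{A\tensor{}A}$ furnished by condition (i). This is a correct and arguably more direct scheme-theoretic argument, which in particular reproves the relevant case of Proposition~\ref{prop:DFR}. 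For the ``inductive limit of dualizables'' statement, the paper argues via rational modules over the convolution algebra of $\cH$ (condition (GT13) of Proposition~\ref{prop:GT} combined with Proposition~\ref{prop:dualizable}); you instead transport the fundamental theorem of comodules over the Hopf $\Bbbk$-algebra $\cH_{\Sscript{x}}$ back through the monoidal equivalence $x_{*}$ of Theorem~\ref{thm:AI}(v), which is valid because an equivalence preserves filtered colimits and a symmetric monoidal equivalence preserves dualizable objects. Hence the only repair needed concerns part (1).
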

}

The notion of  the conjugation relation between two isotropy Hopf algebras  is not automatic. This relation can be formulated by using 2-isomorphisms in the  2-category of flat Hopf algebroids. More specifically, using the notations and the assumptions of Theorem \ref{thm:AI}, for a given Hopf algebroid $(A,\cH)$, two isotropy Hopf algebras $(\Bbbk_{\Sscript{x}}, \cH_{\Sscript{x}})$ and $(\Bbbk_{\Sscript{y}}, \cH_{\Sscript{y}})$, at the points $x,\,  y\, \in \, \hH_{\Sscript{0}}(\Bbbk)$ are said to be  \emph{conjugated} provided there is an isomorphism $\Sf{g}:   (\Bbbk_{\Sscript{x}}, \cH_{\Sscript{x}}) \to (\Bbbk_{\Sscript{y}}, \cH_{\Sscript{y}})$ of Hopf algebras such that the following diagram 
$$
\xymatrix@R=7pt{ (\Bbbk_{\Sscript{x}},\cH_{\Sscript{x}}) \ar@{->}^-{\Sf{g}}[rr] & & (\Bbbk_{\Sscript{y}},\cH_{\Sscript{y}}) \\ & \ar@{->}^-{\Sf{x}}[lu] (A,\cH) \ar@{->}_-{\Sf{y}}[ru] &}
$$
commutes up to a $2$-isomorphism, where $\Sf{x}$ and $\Sf{y}$ are the canonical morphisms attached, respectively,  to $x$ and $ y$.
The transitivity of the conjugation relation characterizes  in fact  the transitivity of the character groupoid. This result is also a corollary of Theorem \ref{thm:AI} and stated as Proposition \ref{prop:conjugation} below: 

{\renewcommand{\thetheorem}{{\bf B}}
\begin{corollary}\label{coro:B}  
Let $(A,\cH)$ be a flat Hopf algebroid as in Theorem \ref{thm:AI} with $\hH_{\Sscript{0}}(\Bbbk) \neq \emptyset$. Assume that $(A,\cH)$ is  geometrically transitive. Then the following  are equivalent 
\begin{enumerate}[(i)]
\item the character groupoid $\mathscr{H}(\Bbbk)$ is transitive;
\item for any two objects $x, y$ in $\mathscr{H}_{\Sscript{0}}(\Bbbk)$, the left $\cH$-comodule algebras $\cH\tensor{A}\Bbbk_{\Sscript{x}}$ and $\cH\tensor{A}\Bbbk_{\Sscript{y}}$ are isomorphic;
\item any two isotropy Hopf algebras are conjugated.
\end{enumerate}
Furthermore, under the stated assumptions, we have that condition $(i)$ is always fulfilled. 
\end{corollary}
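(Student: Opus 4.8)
The plan is to reduce the whole statement to elementary facts about the characters groupoid $\mathscr{H}(\Bbbk)$, whose objects are the algebra maps $x:A\to\Bbbk$ and whose arrows are the algebra maps $g:\cH\to\Bbbk$, with source $g\circ\Sf{s}$ and target $g\circ\Sf{t}$. Condition $(i)$ is by definition the transitivity of this abstract groupoid, so I would prove $(i)\Leftrightarrow(ii)$ and $(i)\Leftrightarrow(iii)$, each time recognising $(ii)$ and $(iii)$ as the affine (Hopf-algebraic) incarnations of two classical characterisations of transitivity: all source-fibres are isomorphic as left $\mathscr{H}(\Bbbk)$-sets, and all isotropy groups are conjugate. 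Throughout, the standing hypothesis that $(A,\cH)$ is flat and geometrically transitive is what guarantees that the isotropy Hopf algebras $(\Bbbk_{\Sscript{x}},\cH_{\Sscript{x}})$ and the trivial bundles $\cH\tensor{A}\Bbbk_{\Sscript{x}}$ are genuine objects of the relevant (2-)categories, so that the classical groupoid constructions can be transported verbatim.

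For $(i)\Leftrightarrow(ii)$ I first observe that $\cH\tensor{A}\Bbbk_{\Sscript{x}}$ is precisely the coordinate algebra of the source-fibre over $x$: its characters are exactly the arrows of $\mathscr{H}(\Bbbk)$ with source $x$, and its $A$-algebra (anchor) structure records their targets. Given an arrow $g:x\to y$, I would build the comodule-algebra isomorphism $\Phi_{g}:\cH\tensor{A}\Bbbk_{\Sscript{x}}\to\cH\tensor{A}\Bbbk_{\Sscript{y}}$ by right translation, namely $\overline{u}\mapsto\sum\overline{u_{(1)}}\,g(u_{(2)})$, obtained from the comultiplication $\Delta$ followed by $g$ on the second leg; it is unital and multiplicative because $\Delta$ and $g$ are, it intertwines the coactions by coassociativity, and its inverse is the right translation by $g\circ S$ coming from the inverse arrow, so $(i)\Rightarrow(ii)$. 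Conversely, from an isomorphism $\Phi$ of left $\cH$-comodule algebras I would transport the identity character $e_{y}=y\circ\epsilon$ of $\cH\tensor{A}\Bbbk_{\Sscript{y}}$: the composite $e_{y}\circ\Phi$ is a character of $\cH\tensor{A}\Bbbk_{\Sscript{x}}$, hence an algebra map $\cH\to\Bbbk$ of source $x$; since $\Phi$ is a comodule (so $A$-linear) map it respects the anchor, which forces the target to be $y$. This yields an arrow $x\to y$ and proves $(ii)\Rightarrow(i)$.

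For $(i)\Leftrightarrow(iii)$ the key is to unwind the notion of $2$-isomorphism in the $2$-category of flat Hopf algebroids. A $2$-cell between two morphisms $(A,\cH)\to(\Bbbk_{\Sscript{y}},\cH_{\Sscript{y}})$ is a natural isomorphism of the induced comodule functors, which by the reconstruction and representability available for flat Hopf algebroids is the same datum as an algebra map $\theta:\cH\to\Bbbk$ whose source and target are the two object-components being compared. Applying this to the triangle, the object-component of $\Sf{g}\circ\Sf{x}$ is $x$ and that of $\Sf{y}$ is $y$, so a $2$-isomorphism filling the triangle is exactly an arrow $x\to y$ in $\mathscr{H}(\Bbbk)$; this gives $(iii)\Rightarrow(i)$ at once. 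For $(i)\Rightarrow(iii)$ I would take an arrow $g:x\to y$, set $\theta:=g$ as the $2$-cell, and define $\Sf{g}:(\Bbbk_{\Sscript{x}},\cH_{\Sscript{x}})\to(\Bbbk_{\Sscript{y}},\cH_{\Sscript{y}})$ to be the affinisation of conjugation by $g$, i.e. the Hopf-algebra map obtained from the iterated comultiplication followed by $g$ and $g\circ S$ on the outer legs; the naturality of $\theta$ with respect to this map is the usual groupoid identity $g\,a=(g\,a\,g^{-1})\,g$, so the triangle commutes up to the $2$-isomorphism $g$.

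The routine parts are the verifications that $\Phi_{g}$ and the conjugation map are well defined over $A$, multiplicative and invertible; these follow formally from the Hopf-algebroid axioms and the antipode $S$, exactly as for abstract groupoids. The main obstacle is the conceptual one in the previous paragraph: identifying the $2$-cells of the ambient $2$-category with arrows of $\mathscr{H}(\Bbbk)$, for it is here that one must invoke the precise construction of this $2$-category (subsection \ref{ssec:W}) together with the flatness and geometric transitivity of $(A,\cH)$ in order to know that no $2$-isomorphisms exist beyond those represented by characters of $\cH$. Once this dictionary is in place, $(i)$, $(ii)$ and $(iii)$ become three readings of the single statement that $\mathscr{H}(\Bbbk)$ is connected.
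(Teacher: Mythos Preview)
Your overall strategy and constructions mirror the paper's: the paper proves the cycle $(i)\Rightarrow(ii)\Rightarrow(iii)\Rightarrow(i)$ with exactly the translation and conjugation maps you describe, and your $(ii)\Rightarrow(i)$ is just the first half of the paper's $(ii)\Rightarrow(iii)$ (extract a character from the isomorphism, then stop). So the route is essentially the same, but two points in your write-up need correction.

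First, your translation formula is not well defined as written. With $g\circ\Sf{s}=x$ and $g\circ\Sf{t}=y$, the assignment $u\tensor{A}1_{\Sscript{x}}\mapsto u_{\Sscript{(1)}}\tensor{A}g(u_{\Sscript{(2)}})1_{\Sscript{y}}$ fails the check on $\Sf{t}(a)u\tensor{A}1_{\Sscript{x}}=x(a)\,(u\tensor{A}1_{\Sscript{x}})$: since $\Delta(\Sf{t}(a)u)=u_{\Sscript{(1)}}\tensor{A}\Sf{t}(a)u_{\Sscript{(2)}}$, the left side produces the factor $g(\Sf{t}(a))=y(a)$, not $x(a)$. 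The paper's map inserts the antipode, sending $u\tensor{A}1_{\Sscript{x}}\mapsto u_{\Sscript{(1)}}\tensor{A}h(\sS(u_{\Sscript{(2)}}))1_{\Sscript{y}}$; relatedly, the characters of $P_{\Sscript{x}}=\cH_{\Sscript{\Sf{t}}}\tensor{A}\Bbbk_{\Sscript{x}}$ are the arrows of $\hH(\Bbbk)$ with \emph{target} $x$, not source $x$ (cf.\ Lemma~\ref{lema:P}). These direction issues are easy to repair, but they are exactly the content of the checks you called ``routine''.

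Second, and more importantly, the ``main obstacle'' you flag does not exist. In the $2$-category of flat Hopf algebroids (built in subsection~\ref{ssec:H}, not~\ref{ssec:W}), a $2$-cell is \emph{by definition} an algebra map $\fk{c}:\cH\to B$ satisfying the diagrams~\eqref{Eq:2cells}; no reconstruction or representability argument is involved, and geometric transitivity plays no role. A $2$-isomorphism filling the triangle is therefore literally an algebra map $g:\cH\to\Bbbk$ with $g\circ\Sf{s}=x$, $g\circ\Sf{t}=y$ together with the third compatibility in~\eqref{Eq:2cells}, which is exactly equation~\eqref{Eq:Z}. In fact the paper's proof of the three equivalences never invokes the GT hypothesis; it is present only to situate the statement as a corollary of Theorem~\ref{thm:AI}, not because the argument needs it.
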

}

Let  $(A,\cH)$  be a flat Hopf algebroid  over $\Bbbk$ with $A \neq 0$ and  $\hHo(\Bbbk) = \emptyset$. If  $(A,\cH)$ satisfies condition $(i)$ of Theorem \ref{thm:AI} and there exists a field extension $L$ of $\Bbbk$ such that $\hHo(L) \neq \emptyset$, then Corollary \ref{coro:B} can be applied to $(\AL, \HL)$  and implies that $\hH(L)$ is a transitive groupoid, see Proposition \ref{prop:Lk}. 

Transitive groupoids are related to principal group-bisets, in the sense that there is a (non canonical) correspondence between these two notions, see Subsection \ref{ssec:GPS}. This in fact is an abstract formulation of Ehresmann's well-known result dealing with the correspondence between transitive Lie groupoids and principal fibre bundles, as was expounded in  \cite{Cartier:2008}. 
The analogous correspondence at the level of Hopf algebroids is not always possible and some technical assumptions are required.  The  formulation of this  result is given as follows.

For any object $x \in \hH_{\Sscript{0}}(\Bbbk)$, consider the presheaf of sets which associates to each algebra $C$  the set $\pP_{\Sscript{x}}(C) :=\Sf{t}^{-1}\big( \{ x^*(1_{\Sscript{C}})\}\big)$, where $\Sf{t}$ is the target of the groupoid $\hH(C)$. In the terminology of \cite{Higgins:1971}, this is the \emph{left star set} of the object $ x^*(1_{\Sscript{C}})$.
Denote by $\alpha_{\Sscript{x}}: A \to P_{\Sscript{x}}:=\cH\tensor{A}\Bbbk_{\Sscript{x}}$ the algebra map which sends $a \mapsto \Sf{s}(a)\tensor{}1$. It turns out that  the presheaf of sets $\mathscr{P}_{\Sscript{x}}$ is affine, and is up to a natural isomorphism represented by the  $(\cH,\cH_{\Sscript{x}})$-bicomodule algebra $P_{\Sscript{x}}$.
The subsequent is a  corollary of Theorem \ref{thm:AI}, and formulates the desired result. It is  a combination of Lemma \ref{lema:P} and Proposition \ref{prop:GTPB} below.

{\renewcommand{\thetheorem}{{\bf C}}
\begin{corollary} Let $(A,\cH)$ be a flat Hopf algebroid as in Theorem \ref{thm:AI} with $\hH_{\Sscript{0}}(\Bbbk) \neq \emptyset$.   If $(A,\cH)$ is a GT Hopf algebroid, then for any object $x \in \hH_{\Sscript{0}}(\Bbbk)$, the comodule algebra $(P_{\Sscript{x}}, \alpha_{\Sscript{x}})$ is a right  principal $\cH_{\Sscript{x}}$-bundle (i.e., a Hopf-Galois extension).

Conversely, let $(P,\alpha)$ be a right principal $B$-bundle over a Hopf $\Bbbk$-algebra $B$ with extension $\alpha: A \to P$. Denote by $\upsilonup: \cH:=(P\tensor{}P)^{\Sscript{coinv_{B}}} \to P\tensor{}P$ the canonical injection, where $P\tensor{}P$ is a right $B$-comodule algebra via the diagonal coaction  (here $R^{\Sscript{coinv_B}}$ denotes the subalgebra of  coinvariant elements of a right $B$-comodule algebra $R$). Assume that $\upsilonup$ is a faithfully flat extension and that $$\cH\tensor{A}\cH = \Big((P\tensor{}P)\tensor{A}(P\tensor{}P) \Big)^{\Sscript{coinv_B}},$$
where $(P\tensor{}P)\tensor{A}(P\tensor{}P)$ is endowed with a canonical right $B$-comodule algebra structure. 
Then the pair of algebras $(A,\cH)$ admits a unique  structure of a GT Hopf algebroid such that $(\alpha,\upsilonup):(A,\cH) \to (P,P\tensor{}P)$ is a morphism of GT Hopf algebroids.
\end{corollary}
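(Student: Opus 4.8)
The plan is to prove the two assertions separately, in each case reducing the principal-bundle conditions to the characterizations of geometric transitivity provided by Theorem \ref{thm:AI}. The underlying geometry is the algebraic form of the Ehresmann correspondence: $P_{\Sscript{x}}$ is the left star of the object $x$ in the groupoid $\hH$, on which the isotropy group represented by $\cH_{\Sscript{x}}$ acts, whereas in the converse $(P, P\tensor{}P)$ is the codiscrete (pair) Hopf algebroid of $\Spec{P}$ and $(A,\cH)$ is the Hopf algebroid of the associated gauge groupoid, recovered by descent along the principal bundle $\alpha$.

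For the forward direction (Lemma \ref{lema:P}) I would simply specialize Theorem \ref{thm:AI} to the extension $\phi=x\colon A \to \Bbbk_{\Sscript{x}}$. Geometric transitivity yields condition (vi), so the trivial principal left $(\cH,\cH_{\Sscript{x}})$-bundle $\cH\tensor{A}\Bbbk_{\Sscript{x}}=P_{\Sscript{x}}$ is a principal bi-bundle; in particular it is a right principal $\cH_{\Sscript{x}}$-bundle, that is, a Hopf--Galois extension. Unwinding this, the faithful flatness of $\alpha_{\Sscript{x}}$ is exactly condition (iii) at $\phi=x$, while the bijectivity of the canonical map $\can{P_{\Sscript{x}}}\colon P_{\Sscript{x}}\tensor{A}P_{\Sscript{x}} \to P_{\Sscript{x}}\tensor{}\cH_{\Sscript{x}}$ is the right-principality packaged in (vi); these two facts are precisely the content of the lemma.

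For the converse (Proposition \ref{prop:GTPB}) I would first put on $(P, P\tensor{}P)$ the codiscrete Hopf algebroid structure, with source $p\mapsto p\tensor{}1$, target $p\mapsto 1\tensor{}p$, counit the multiplication $p\tensor{}q\mapsto pq$, antipode the flip $p\tensor{}q\mapsto q\tensor{}p$, and comultiplication $p\tensor{}q\mapsto (p\tensor{}1)\tensor{P}(1\tensor{}q)$; its unit map is the identity of $P\tensor{}P$, so it is geometrically transitive. Each of these maps is $B$-colinear for the diagonal coaction (for the counit and antipode one uses that $P$ is a comodule algebra and that $B$ is commutative), hence restricts to $B$-coinvariants: the source and target land in $\cH=(P\tensor{}P)^{\Sscript{coinv_{B}}}$ because $a\tensor{}1$ and $1\tensor{}a$ are coinvariant for $a\in A=P^{\Sscript{coinv_{B}}}$, the counit restricts to $\epsilon\colon \cH \to A$, and the flip restricts to an antipode $S\colon \cH\to\cH$. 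The only delicate structure map is the comultiplication: the $B$-colinear assignment $p\tensor{}q\mapsto (p\tensor{}1)\tensor{A}(1\tensor{}q)$ carries $\cH$ into $((P\tensor{}P)\tensor{A}(P\tensor{}P))^{\Sscript{coinv_{B}}}$, and the standing hypothesis identifies this target with $\cH\tensor{A}\cH$, so $\Delta\colon \cH \to \cH\tensor{A}\cH$ is well defined and, by construction, is intertwined by $\upsilonup$ with the codiscrete comultiplication. Because $\upsilonup$ is faithfully flat, hence injective, these structure maps are the unique ones making $(\alpha,\upsilonup)$ a morphism, and each Hopf algebroid axiom for $(A,\cH)$ may be verified after applying $\upsilonup$ (or $\upsilonup\tensor{A}\upsilonup$), where it reduces to the already established axiom in $(P,P\tensor{}P)$. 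Flatness of $\cH$ over $A$ follows from the cancellation property of faithfully flat maps, since $P\tensor{}P$ is flat over $A$ and faithfully flat over $\cH$.

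It remains to see that $(A,\cH)$ is geometrically transitive, for which I would use Theorem \ref{thm:AI}(i). A direct computation gives $\upsilonup\circ(\Sf{s}\tensor{}\Sf{t})=\alpha\tensor{}\alpha$ as maps $A\tensor{}A\to P\tensor{}P$, namely $a\tensor{}b\mapsto a\tensor{}b$; the extension $\alpha\tensor{}\alpha$ is faithfully flat because $\alpha$ is, and $\upsilonup$ is faithfully flat by hypothesis, so the cancellation property forces $\Sf{s}\tensor{}\Sf{t}\colon A\tensor{}A\to\cH$ to be faithfully flat, which is condition (i). \textbf{The main obstacle} I anticipate is bookkeeping rather than conceptual: in the forward direction, tracking the various $A$-module structures (via $\Sf{s}$, $\Sf{t}$ and $x$) so that the specialization of (vi) genuinely produces $\can{P_{\Sscript{x}}}$; and in the converse, the patient check of coassociativity and of the antipode identities, where one must also propagate the hypothesis that coinvariants commute with the tensor square to the higher tensor powers of $\cH$ over $A$, a propagation guaranteed by the faithful flatness of $\upsilonup$.
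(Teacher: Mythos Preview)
Your proposal is correct and follows essentially the same route as the paper: the forward direction is exactly the paper's one-line appeal to Theorem~\ref{thm:AI} specialized at $\phi=x$, and for the converse you construct the Hopf algebroid structure on $\cH$ by restricting the codiscrete structure of $(P,P\tensor{}P)$ to $B$-coinvariants (using the hypothesis $\cH\tensor{A}\cH=\cM^{\Sscript{coinv_B}}$ for the comultiplication) and then deduce geometric transitivity from the factorization $\upsilonup\circ(\Sf{s}\tensor{}\Sf{t})=\alpha\tensor{}\alpha$, which is precisely the paper's argument. The only cosmetic difference is that the paper checks counitality via the translation map $\delta:B\to P\tensor{A}P$ rather than by your faithfully-flat descent argument, but both work.
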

}

\section{Abstract groupoids: General notions and basic properties}\label{sec:Grpd}
This section contains  the results  about  groupoids,
which we want to transfer to the context of Hopf algebroids in the forthcoming sections.  For sake of completeness we include some of their proofs.

\subsection{Notations, basic notions and examples}\label{ssec:basic}
A \emph{groupoid (or abstract groupoid)} is a small category where each morphism is an isomorphism. That is, a pair of two sets $\mathscr{G}:=(G_{\Sscript{1}}, G_{\Sscript{0}})$ with diagram 
$\xymatrix@C=35pt{G_{\Sscript{1}}\ar@<0.70ex>@{->}|-{\scriptscriptstyle{\sf{s}}}[r] \ar@<-0.70ex>@{->}|-{\scriptscriptstyle{\sf{t}}}[r] & \ar@{->}|-{ \scriptscriptstyle{\iota}}[l]G_{\Sscript{0}}}$,
where $\Sf{s}$ and $\Sf{t}$ are resp.~ the source and the target of a given arrow, and $\iota$ assigns to each object its identity arrow; together with an associative and unital multiplication  $G_{\Sscript{2}}:= G_{\Sscript{1}}\, \due  \times {\Sscript{\Sf{s}}} {\, \Sscript{\Sf{t}}} \, G_{\Sscript{1}} \to G_{\Sscript{1}}$  as well as a map $G_{\Sscript{1}} \to G_{\Sscript{1}}$ which associates to each arrow its inverse. 

Given a groupoid $\gG$, consider an object $x \in G_{\Sscript{0}}$, \emph{the isotropy group of $\gG$ at $x$}, is the group of loops:
\begin{equation}\label{Eq:isotropy}
\gG^{\Sscript{x}}:=\Big\{ g \in G_{\Sscript{1}}|\, \Sf{s}(g)=\Sf{t}(g)=x \Big\}.
\end{equation}

Notice that the disjoint union ${\biguplus}_{\Sscript{x\, \in \, G_0}} \gG^{\Sscript{x}}$ of all isotropy groups form  the set of arrows of a subgroupoid  of $\gG$ whose source equal to its target, namely, the projection ${\biguplus}_{\Sscript{x\, \in \, G_0}} \gG^{\Sscript{x}} \to G_{\Sscript{0}}$.

A \emph{morphism of groupoids} $\phi: \mathscr{H} \to \mathscr{G}$ is a functor between the underlying categories. That is, $\phi=(\phi_{\Sscript{0}}, \phi_{\Sscript{1)}})$, where $\phi_{\Sscript{0}}: H_{\Sscript{0}} \to G_{\Sscript{0}}$ and $\phi_{\Sscript{1}}:  H_{\Sscript{1}} \to G_{\Sscript{1}}$ satisfying the pertinent compatibility conditions: 
$$
\phi_{\Sscript{1}} \circ \iota \,=\, \iota \circ \phi_{ \Sscript{0}},\;\; \phi_{\Sscript{0}} \circ \Sf{s} \,=\, \Sf{s} \circ \phi_{ \Sscript{1}},\;\; \phi_{\Sscript{0}} \circ \Sf{t} \,=\, \Sf{t} \circ \phi_{ \Sscript{1}},\;\; \phi_{\Sscript{1}}(fg)= \phi_{\Sscript{1}}(f) \phi_{\Sscript{1}}(g),
$$
whenever the multiplication $fg$ in $H_{\Sscript{1}}$ is  permitted.

Obviously any such a morphism  induces  morphisms between the isotropy groups: $\phi^{\Sscript{u}}: \hH^{\Sscript{u}}  \to \gG^{\Sscript{\phi_0(u)}}$, for every $u \in H_{\Sscript{0}}$. Naturally,  groupoids,  morphism of groupoids, and natural transformations form a $2$-category $\Sf{Grpds}$.
Next we describe some typical examples of  groupoids and their morphisms.

\begin{example}\label{exam:pb}
Let $G$ be a group and  fix a set $M$. Denote by $BG_{\Sscript{M}}$ the category whose objects are (left) $G$-torsors of the form $(P,G,M)$ and morphisms are $G$-morphisms. In the terminology of Definition  \ref{def:pbset}  below,  an object $(P,G,M)$ in $BG_{\Sscript{M}}$ is a  \emph{principal  left $G$-set} $P$. Precisely, this is a left $G$-set $P$ with projection  $\pi:P \to M$ to the set of orbits $M$ and where the canonical map $G\times P \to P\, \due  \times {\Sscript{\pi}} {\, \Sscript{\pi}} \, P$, $(g,p) \mapsto (gp,p)$ is bijective. It is clear that any morphism in this category is an isomorphism, thus, $BG_{\Sscript{M}}$ is a  groupoid (probably not small). The groupoid $BG_{\Sscript{pt}}$ plays  a crucial role in the representation theory of the group $G$. Furthermore,  when $M$ varies in the category $\sf{Sets}$ of sets, we obtain the presheaf of groupoids $BG: \Sf{Sets}^{\Sscript{op}} \longrightarrow \Sf{Grpds}$, which is known as \emph{the classifying stack} of the group $G$.
\end{example}

\begin{example}\label{exam:X}
Assume that $\cR \subseteq X \times X$ is an equivalence relation on a set $X$.  One can construct a groupoid $\xymatrix@C=35pt{\cR \ar@<0.8ex>@{->}|-{\scriptscriptstyle{pr_2}}[r] \ar@<-0.8ex>@{->}|-{\scriptscriptstyle{pr_1}}[r] & \ar@{->}|-{ \scriptscriptstyle{\iota}}[l] X, }$  with structure maps as follows. The source and the target are $\Sf{s}=pr_{\Sscript{2}}$ and $\Sf{t}=pr_{\Sscript{1}}$, the second and the first projections, and the  map of identity arrows $\iota$ is the diagonal one. The multiplication and the inverse maps use, respectively, the transitivity and reflexivity of $\cR$ and are given by 
$$
(x,x') \, (x',x'')\,=\, (x,x''),\quad \text{and} \quad (x,x')^{-1}\,=\, (x',x).
$$

This is  an important class of groupoids  known as \emph{the groupoid of equivalence relation}, see \cite[Exemple 1.4, page 301]{DemGab:GATIGAGGC}.  A particular situation is  when $\cR=X \times X$, that is, the obtained groupoid is the so called \emph{the groupoid  of pairs} (called \emph{fine groupoid} in \cite{Brown:1987} and \emph{simplicial groupoid} in \cite{Higgins:1971}); or  when $\cR$ is defined by a certain fibred product $X\, \due \times {\Sscript{\nuup}} {\; \Sscript{\nuup}} \, X$ for a map $\nuup: X \to Y$.
\end{example}

\begin{example}\label{exam:action}
Any group $G$ can be considered as a groupoid by taking $G_{\Sscript{1}}=G$ and $G_{\Sscript{0}}=\{*\}$ (a set with one element). Now if $X$ is a right $G$-set with action $\rho: X\times G \to X$, then one can define the so called \emph{the action groupoid}: $G_{\Sscript{1}}=X \times G$ and $G_{\Sscript{0}}=X$, the source and the target are $\Sf{s}=\rho$ and $\Sf{t}=pr_{\Sscript{1}}$, the identity map sends $x \mapsto (e,x)=\iota_{\Sscript{x}}$, where $e$ is the identity element of $G$. The multiplication is given by  $(x,g) (x',g')=(x,gg')$, whenever $xg=x'$, and the inverse is defined by $(x,g)^{-1}=(xg,g^{-1})$. Clearly the pair of maps $(pr_{\Sscript{2}}, *): (G_{\Sscript{1}}, G_{\Sscript{0}}) \to (G,\{*\})$ defines a morphism of groupoids. 
\end{example}

\begin{example}\label{exam:induced}
Let $\gG=(G_{\Sscript{1}}, G_{\Sscript{0}})$ be a groupoid and $\varsigma:X \to G_{\Sscript{0}}$ a map. Consider the following  pair of sets:
$$
G^{\Sscript{\varsigma}}{}_{\Sscript{1}}:= X \,  \due \times {\Sscript{\varsigma}} {\, \Sscript{\Sf{t}}} \, G_{\Sscript{1}} \;  \due \times {\Sscript{\Sf{s}}} {\, \Sscript{\varsigma}}  \, X= \Big\{ (x,g,x') \in X\times G_{\Sscript{1}}\times X| \;\; \varsigma(x)=\Sf{t}(g), \varsigma(x')=\Sf{s}(g)  \Big\}, \quad G^{\Sscript{\varsigma}}{}_{\Sscript{0}}:=X.
$$
Then $\gG^{\Sscript{\varsigma}}{}=(G^{\Sscript{\varsigma}}{}_{\Sscript{1}}, G^{\Sscript{\varsigma}}{}_{\Sscript{0}})$ is a groupoid, with structure maps: $\Sf{s}= pr_{\Sscript{3}}$, $\Sf{t}= pr_{\Sscript{1}}$, $\iota_{\Sscript{x}}=(\varsigma(x), \iota_{\Sscript{\varsigma(x)}}, \varsigma(x))$, $x \in X$. The multiplication is defined by $(x,g,y) (x',g',y')= ( x,gg',y')$, whenever $y=x'$, and the inverse is given by $(x,g,y)^{-1}=(y,g^{-1},x)$. 
The groupoid $\gG^{\Sscript{\varsigma}}$ is known as \emph{the induced groupoid of $\gG$ by the map $\varsigma$}, (or \emph{ the pull-back groupoid of $\gG$ along $\varsigma$}, see   \cite{Higgins:1971} for dual notion).  Clearly, there  is a canonical morphism $\phi^{\Sscript{\varsigma}}:=(pr_{\Sscript{2}}, \varsigma): \gG^{\Sscript{\varsigma}} \to \gG$ of groupoids. 
\end{example}

Any morphism $\phi: \mathscr{H} \to \mathscr{G}$ of groupoids factors through the canonical morphism $\gG^{\Sscript{\phi_0}} \to \gG$, that is we have the following  (strict) commutative diagram 
$$
\xymatrix@R=7pt{\hH \ar@{->}^-{\phi}[rr] \ar@{.>}_-{\phi'}[rd]  & & \gG \\ & \gG^{\Sscript{\phi_0}} \ar@{->}^-{}[ru] & }
$$
of groupoids, where $\phi'_{\Sscript{0}}=id_{\Sscript{\hH_{0}}}$ and 
$$
\phi'_{\Sscript{1}}: H_{\Sscript{1}} \longrightarrow G^{\Sscript{\phi_0}}{}_{\Sscript{1}}, \quad \Big( h \longmapsto \big(\Sf{t}(h), \phi_{\Sscript{1}}(h), \Sf{s}(h)\big)\Big).  
$$

A particular and important example of an induced groupoid is the case when $\gG$ is a groupoid with one object, that is, a group.  In this case, to any group $G$ and a set $X$, one can associated the groupoid $(X\times G \times X,X)$ as the induced groupoid of $(G,\{ * \})$ by the map $*: X \to \{ * \}$.

Recall that a groupoid $\gG=(G_{\Sscript{1}}, G_{\Sscript{0}})$ is said to be \emph{transitive} if the map $(\Sf{s},\Sf{t}): G_{\Sscript{1}} \to G_{\Sscript{0}} \times G_{\Sscript{0}}$ is surjective.

\begin{example}\label{exam.transitive}
The groupoid of pairs is clearly transitive, as well as any  induced groupoid of the form $(X\times G \times X, X)$. On other hand, if a group $G$ acts transitively on  a set $X$, then the associated action groupoid is by construction transitive. 
\end{example}

Let $\gG$ be a transitive groupoid. Then if $x, y \in G_{\Sscript{0}}$, there is a non-canonical isomorphism of groups $\gG^{\Sscript{x}} \cong \gG^{\Sscript{y}}$ given by conjugation: Let 
$g \in G_{\Sscript{1}}$ with $x= \Sf{s}(g)$ and $\Sf{t}(g)=y$, then
$$
\gG^{\Sscript{x}} \longrightarrow \gG^{\Sscript{y}}, \Big( h \longmapsto g h g^{-1}\Big) 
$$
is an isomorphism of groups. 
This fact is essential in showing that any transitive groupoid is  isomorphic, in a non-canonical way, to an induced groupoid of the form $(X\times G \times X, X)$. Indeed, given a transitive groupoid $\gG$, fix an object $x \in G_{\Sscript{0}}$ with isotropy group $\gG^{\Sscript{x}}$ and chose a family of arrows $\{f_{\Sscript{y}}\}_{\Sscript{ y \,\in\, G_0}}$ such that $f_{\Sscript {y}} \in \Sf{t}^{-1}(\{x\})$ and $\Sf{s}(f_{\Sscript{y}})=y$, for $y \neq x$ while $f_{\Sscript{x}}=\iota(x)$, for $y=x$.  In this way the morphism 
$$
\phi^{\Sscript{x}}: \gG \overset{\cong}{\longrightarrow} (G_{\Sscript{0}}\times \gG^{\Sscript{x}}\times G_{\Sscript{0}}, G_{\Sscript{0}}),\quad \Big[ (g,z) \longmapsto \Big(\Big(\Sf{s}(g), f_{\Sscript{\Sf{t}(g)}}\, g \, f_{\Sscript{\Sf{s}(g)}}^{-1}, \Sf{t}(g)\Big), z \Big) \Big]
$$
establishes an isomorphism of groupoids.

\subsection{Groupoids actions,  equivariant maps  and the orbits sets}\label{ssec:Grpd1} The following definition is a natural generalization to the context of groupoids, of the usual notion of group-set. It is an abstract formulation of that given in \cite[Definition 1.6.1]{Mackenzie:2005} for Lie groupoids, and essentially the same definition based on the Sets-bundles notion given in  \cite[Definition 1.11]{Renault:1980}.
\begin{definition}\label{def:Gset}
Given a groupoid $\mathscr{G}$, a \emph{right} $\mathscr{G}$-\emph{set} is a triple $(X,\varsigma, \rho)$ where  $X$ is a set and $\varsigma:X \to G_{\Sscript{0}}$ and $\rho: X\, \due \times {\Sscript{\varsigma}} {\, \Sscript{\Sf{t}}} \, G_{\Sscript{1}} \to X$ (shortly written as  $\rho(x,g) := xg$)  are the \emph{structure} and \emph{action} maps respectively. These maps obey  the following conditions
\begin{enumerate}
\item $\Sf{s}(g)=\varsigma(xg)$, for any $x \in X$ and $g \in G_{\Sscript{1}}$ with $\varsigma(x)=\Sf{t}(g)$.
\item $x \iota_{\varsigma(x)}= x$, for every $x \in X$.
\item $ (xg)h= x(gh)$, for every $x \in X$, $g,h \in G_{\Sscript{1}}$ with $\varsigma(x)=\Sf{t}(g)$ and $\Sf{t}(h)=\Sf{s}(g)$.
\end{enumerate}
\end{definition}
In order to simplify the notation we denote a right $\gG$-set by a pair $(X,\varsigma)$, omitting the action $\rho$. A \emph{left groupoid action} is analogously defined by interchanging the source with the target and similar notations might  be adopted. Obviously, any groupoid  $\gG$ acts  over itself on both sides by using the regular action, i.e.,  the multiplication $G_{\Sscript{1}} \, \due \times {\Sscript{\Sf{s}}} {\, \Sscript{\Sf{t}}} \, G_{\Sscript{1}} \to G_{\Sscript{1}}$. Thus, we have that  $(G_{\Sscript{1}}, \Sf{s})$ is a right $\gG$-set and $(G_{\Sscript{1}}, \Sf{t})$ is a left $\gG$-set.

Let  $(X,\varsigma)$ be a right $\gG$-set, and consider the pair of sets $X \rJoin \gG :=\Big( X\, \due \times {\Sscript{\varsigma}} {\, \Sscript{\Sf{t}}} \, G_{\Sscript{1}} , X \Big)$ as a groupoid with structure maps $\Sf{s}=\rho$, $\Sf{t}=pr_{\Sscript{1}}$, $\iota_{\Sscript{x}}=(x, \iota_{\varsigma(x)})$. The multiplication and the inverse maps are defined by $(x,g)(x',g')=(x,gg')$ and $(x,g)^{-1}=(xg,g^{-1})$. The groupoid $X\rJoin \gG$ is known as the \emph{right translation groupoid of $X$ by $\gG$}. 

For sake of completeness let us recall the notion of equivariant maps. A \emph{morphism of  right $\gG$-sets} (or \emph{$\gG$-equivariant map})  $F: (X,\varsigma) \to (X',\varsigma')$ is a map $F:X \to X'$ such that the diagrams 
\begin{equation}
\begin{gathered}
\xymatrix@R=7pt{ & X \ar@{->}_-{\Sscript{\varsigma}}[ld]  \ar@{->}^-{F}[dd] & \\ G_{\Sscript{0}}& & \\ & X' \ar@{->}^-{\Sscript{\varsigma'}}[lu]  & } \qquad  \qquad \xymatrix@R=7pt{X\, \due \times {\Sscript{\varsigma}} {\, \Sscript{\Sf{t}}} \,  G_{\Sscript{1}} \ar@{->}^-{}[rr]  \ar@{->}_-{\Sscript{F\, \times \, id}}[dd] & & X  \ar@{->}^-{\Sscript{F}}[dd] \\  & & \\ X'\, \due \times {\Sscript{\varsigma'}} {\, \Sscript{\Sf{t}}} \,  G_{\Sscript{1}}  \ar@{->}^-{}[rr] & & X'  } 
\end{gathered}
\end{equation}
commute. Clearly any such a $\gG$-equivariant map induces a morphism of groupoids $\Sf{F}: X \rJoin \gG \to X' \rJoin \gG$.

Next we recall the notion of the orbit set attached to a right groupoid-set.  This notion is a generalization of the orbit set in the context of group-sets.  Here we use the (right) translation groupoid to introduce this set. 
First we recall the notion of the orbit set of a given groupoid.  \emph{The orbit set of a groupoid} $\gG$ is the  quotient set of $G_{\Sscript{0}}$  by the following equivalence relation: 
$$ 
x \sim y \, \iff \, \exists \, g \in G_{\Sscript{1}} \;\text{ such that }\; {\sf{s}}(g)=x \;\text{ and }\; {\sf{t}}(g)=y.
$$
In others words, this is the set of all connected components of $\gG$. 

Given  a right $\gG$-set  $(X,\varsigma)$, the \emph{orbit set}  $X/\gG$ of $(X,\varsigma)$ is the orbit set of the translation groupoid  $X \rJoin \gG$. If $\gG=(X\times G, X)$ is an  action groupoid as in Example \ref{exam:action}, then obviously the orbit set of this groupoid coincides with the classical set  $X/G$ of orbits.

\subsection{Principal groupoid-bisets and the two sided translation groupoid}\label{ssec:biset}
We give in this subsection an exhaustive survey on principal groupoids bisets and the formal constructions of their bicategories. 

Let $\gG$ and $\hH$ be two groupoids and $(X,\varsigma, \vartheta)$ a triple consisting of a set $X$ and two maps $\varsigma : X \to G_{\Sscript{0}}$, $\vartheta: X \to H_{\Sscript{0}}$.  The following definitions are abstract formulations of those given in \cite{Jelenc:2013, Moedijk/Mrcun:2005} for topological  and Lie groupoids.
\begin{definition}\label{def:biset}
An \emph{$(\hH,\gG)$-biset}  is a triple $(X,\varsigma, \vartheta)$ where $(X,\varsigma)$ is endowed with a  right $\gG$-action $\rho:X\, \due \times {\Sscript{\varsigma}} {\, \Sscript{\Sf{t}}} \,  G_{\Sscript{1}} \to X$  and  $(X,\vartheta)$ with a left $\hH$-action $ \lambda: H_{\Sscript{1}}\, \due \times {\Sscript{\Sf{s}}} {\, \Sscript{\vartheta}} \,  X \to X$ 
such that
\begin{enumerate} 
\item For any $x \in X$, $h \in H_{\Sscript{1}}$, $g \in G_{\Sscript{1}}$ with $\vartheta(x)=\Sf{s}(h)$ and $\varsigma(x)=\Sf{t}(g)$, we have
$$ \vartheta(xg) =\vartheta(x)\; \text{ and }\; \varsigma(hx)=\varsigma(x).$$
\item For any $ x \in X$, $h \in H_{\Sscript{1}}$ and $ g \in G_{\Sscript{1}}$ with  $\varsigma(x)=\Sf{t}(g)$, $\vartheta(x)=\Sf{s}(h)$, we have 
$h(xg)\,=\, (hx)g$.
\end{enumerate}
\end{definition}

\emph{The two sided translation groupoid} associated to a given $(\hH, \gG)$-biset $(X,\varsigma, \vartheta)$ is defined to be the groupoid $\hH \lJoin X \rJoin \gG$ whose set of objects is $X$ and set of arrows is 
$$
H_{\Sscript{1}}\, \due \times {\Sscript{\Sf{s}}} {\, \Sscript{\vartheta}} \, X \, \due \times {\Sscript{\varsigma}}{\, \Sscript{\Sf{s}}} \, G_{\Sscript{1}}\,=\, \Big\{ (h,x,g) \, \in \,  H_{\Sscript{1}}\times X \times G_{\Sscript{1}}| \,\, \Sf{s}(h)= \vartheta(x),\, \Sf{s}(g)=\varsigma(x) \Big\}.
$$
The structure maps are:  
$$
\Sf{s}(h,x,g)=x,\quad \Sf{t}(h,x,g)=hxg^{-1}\;\;  \text{ and }\; \iota_{\Sscript{x}}=(\iota_{\Sscript{\vartheta(x)}}, x,  \iota_{\Sscript{\varsigma(x)}}). 
$$
The multiplication and the inverse are given by: 
$$
(h,x,g) (h',x',g')\,=\,(hh',x',gg'),\quad  (h,x,g)^{-1}=(h^{-1}, hxg^{-1}, g^{-1}).
$$

Associated to a given $(\hH, \gG)$-biset $(X,\varsigma,\vartheta)$, there are two canonical morphisms of groupoids: 
\begin{eqnarray}
\Sigma: \hH \lJoin X \rJoin \gG \longrightarrow \gG, & & \Big( (h,x,g),  y \big) \longmapsto \big( g,\varsigma(y) \Big),         \label{Eq:t}  \\
\Theta: \hH \lJoin X \rJoin \gG \longrightarrow \hH, & &   \Big( (h,x,g),  y \big) \longmapsto \big( h,\vartheta(y) \Big). \label{Eq:s}
\end{eqnarray}

\begin{definition}\label{def:pbset}
Let  $(X,\varsigma,\vartheta)$ be  an $(\hH,\gG)$-biset. We say that $(X,\varsigma,\vartheta)$ is a \emph{left principal $(\hH,\gG)$-biset} if it satisfies the following conditions:
\begin{enumerate}[(P-1)]
\item $\varsigma:  X \to G_{\Sscript{0}}$ is surjective;
\item the canonical map 
\begin{equation}\label{Eq:can}
\nabla: H_{\Sscript{1}}\, \due \times {\Sscript{\Sf{s}}} {\, \Sscript{\vartheta}} \, X \longrightarrow X\,  \due \times {\Sscript{\varsigma}} {\, \Sscript{\varsigma}} \, X  , \quad \Big( (h,x) \longmapsto (hx,x)\Big) 
\end{equation}
is bijective. 
\end{enumerate}
\end{definition}
By condition (P-2) we consider the map $\delta: = pr_{\Sscript{1}} \circ \nabla^{-1}:  X\,  \due \times {\Sscript{\varsigma}} {\, \Sscript{\varsigma}} \, X \to H_{\Sscript{1}}$. This map clearly satisfies:  
\begin{eqnarray}
 \Sf{s}\big(\delta(x,y)\big) &=& \vartheta(y) \label{Eq:d1} \\
\delta(x,y)y&=& x,\quad \text{ for any} \,  x,y \in X\, \text{ with }\, \varsigma(x)=\varsigma(y); \label{Eq:d2} \\ 
\delta(hx,x) &=& h, \quad \text{for } h \in H_{\Sscript{1}},\, x \in X\,\, \text{ with } \Sf{s}(h)=\vartheta(x). \label{Eq:d3}
\end{eqnarray}
Equation \eqref{Eq:d3}, shows that the action is in fact free, that is, $h x=x$ only when $h=\iota_{\Sscript{\vartheta(x)}}$. The subsequent lemma is also immediate from this definition. 
\begin{lemma}\label{lema:orbit}
Let $(X,\varsigma,\vartheta)$ be a left principal  $(\hH,\gG)$-biset. Then the map $\varsigma$ induces a bijection between the orbit set $X/\hH$ and the set of objects $G_{\Sscript{0}}$.
\end{lemma}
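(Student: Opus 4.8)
The plan is to produce the bijection explicitly: I would show that $\varsigma$ is constant on $\hH$-orbits, so that it descends to a map $\bara{\varsigma}\colon X/\hH \to G_{\Sscript{0}}$, and then read off surjectivity from condition (P-1) and injectivity from condition (P-2) repackaged through the map $\delta$.

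First I would verify well-definedness. By condition (1) of Definition \ref{def:biset} one has $\varsigma(hx)=\varsigma(x)$ whenever $hx$ is defined, so $\varsigma$ takes a common value on each connected component of the left translation groupoid $\hH \lJoin X$; hence it factors as a map $\bara{\varsigma}\colon X/\hH \to G_{\Sscript{0}}$ on the orbit set. Surjectivity of $\bara{\varsigma}$ is then immediate, since (P-1) says precisely that every object of $G_{\Sscript{0}}$ is of the form $\varsigma(x)$ for some $x \in X$.

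For injectivity I would take $x,y \in X$ with $\varsigma(x)=\varsigma(y)$, so that $(x,y) \in X\, \due \times {\Sscript{\varsigma}} {\, \Sscript{\varsigma}} \, X$, and set $h:=\delta(x,y)$. Equation \eqref{Eq:d1} gives $\Sf{s}(h)=\vartheta(y)$, so that the action $hy$ is defined, while \eqref{Eq:d2} gives $hy=x$. Thus $x$ and $y$ lie in the same $\hH$-orbit and therefore determine the same class in $X/\hH$, which proves that $\bara{\varsigma}$ is injective.

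The one point I would treat with care --- and the only place where anything could go wrong --- is the identification of the abstract orbit set $X/\hH$ (defined as the set of connected components of the left translation groupoid $\hH \lJoin X$) with the concrete relation ``$x \sim y$ if and only if $hy=x$ for some $h \in H_{\Sscript{1}}$''. Once this identification is pinned down, the bijectivity of the canonical map $\nabla$ from (P-2), encoded in the properties \eqref{Eq:d1}--\eqref{Eq:d3} of $\delta$, does all the work, so I do not expect any genuine obstacle beyond this bookkeeping.
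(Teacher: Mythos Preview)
Your proof is correct and is exactly the natural unpacking of the lemma; the paper itself does not give an explicit proof but simply declares the statement ``immediate from this definition'' (namely Definition~\ref{def:pbset} together with the properties \eqref{Eq:d1}--\eqref{Eq:d3} of $\delta$). Your well-definedness/surjectivity/injectivity split via (P-1) and $\delta$ is precisely how one makes that immediacy explicit, and the bookkeeping worry you flag about identifying $X/\hH$ with the relation ``$x\sim y$ iff $hy=x$ for some $h$'' is harmless: in the (left) translation groupoid the source and target of an arrow $(h,x)$ are $x$ and $hx$, so connected components are exactly $\hH$-orbits in the naive sense.
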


Analogously one defines \emph{right principal $(\hH,\gG)$-biset}. \emph{A principal  $(\hH,\gG)$-biset} is  both left and right principal biset.  
For instance, $(G_{\Sscript{1}}, \Sf{t}, \Sf{s})$ is a left and right principal $(\gG,\gG)$-biset, known as the \emph{unit principal biset},  which we denote by $\uU(\gG)$. 
More examples of left principal bisets can be performed, as in the geometric case, by pulling back other left principal bisets. Precisely, assume we are given $(X,\varsigma,\vartheta)$ a left principal $(\hH,\gG)$-biset, and let $\psi: \kK \to \gG$ be a morphism of groupoids. Consider the set $Y:= X\,\due \times {\Sscript{\vartheta}}{\, \Sscript{\psi_0}} \, K_{\Sscript{0}}$ together with maps $ pr_{\Sscript{2}}  :Y \to K_{\Sscript{0}}$ and $\td{\varsigma}:=  \varsigma \circ  pr_{\Sscript{1}}: Y \to H_{\Sscript{0}}$. Then the triple $(Y,\td{\varsigma},pr_{\Sscript{2}} )$ is an $(\hH, \kK)$-biset with actions 
\begin{eqnarray}
\lambda: H_{\Sscript{1}} \, \due \times {\Sscript{\Sf{s}}} {\, \Sscript{\varsigma}} \, Y \longrightarrow Y, & & \big(h, (x,u)\big) \longmapsto \big( hx,u \big)  \label{Eq:laction} \\ 
 \rho: Y  \, \due \times {\Sscript{\td{\varsigma}}} {\, \Sscript{\Sf{t}}} \, K_{\Sscript{1}} \longrightarrow Y, & & \big((x,u), f\big) \longmapsto \big( x\psi_{\Sscript{1}}(f),\Sf{s}(f) \big),  \label{Eq:raction}
\end{eqnarray}
which is  actually a left principal $(\hH,\kK)$-biset, and known as the \emph{pull-back principal biset of $(X,\varsigma,\vartheta)$}; we denote it by $\psi^{\Sscript{*}}\big((X,\varsigma,\vartheta)\big)$. A left principal biset is called a \emph{trivial left principal biset} if it is the pull-back of the unit left principal biset, that is, of the form $\psi^{\Sscript{*}}(\uU(\gG))$ for some morphism of groupoids $\psi: \kK \to \gG$. 

Next we expound  the bicategorical constructions beyond the notion of principal groupoids-bisets.  A \emph{morphism of left principal $(\hH, \gG)$-bisets} $F: (X,\varsigma, \vartheta) \to (X', \varsigma', \vartheta')$  is a map $F: X \to X'$ which is simultaneously  $\gG$-equivariant and $\hH$-equivariant, that is,   the following diagrams
\begin{equation}\label{Eq:mpb}
\begin{gathered}
\xymatrix@R=7pt{ & X \ar@{->}_-{\Sscript{\varsigma}}[ld] \ar@{->}^-{\Sscript{\vartheta}}[rd]  \ar@{->}^-{F}[dd] & \\ G_{\Sscript{0}}& & H_{\Sscript{0}} \\ & X' \ar@{->}^-{\Sscript{\varsigma'}}[lu] \ar@{->}_-{\Sscript{\vartheta'}}[ru] & } \qquad \xymatrix@R=7pt{X\, \due \times {\Sscript{\varsigma}} {\, \Sscript{\Sf{t}}} \,  G_{\Sscript{1}} \ar@{->}^-{}[rr]  \ar@{->}_-{\Sscript{F\, \times  \, id}}[dd] & & X  \ar@{->}^-{\Sscript{F}}[dd] \\  & & \\ X'\, \due \times {\Sscript{\varsigma'}} {\, \Sscript{\Sf{t}}} \,  G_{\Sscript{1}}  \ar@{->}^-{}[rr] & & X'  } \qquad \xymatrix@R=7pt{ H_{\Sscript{1}} \, \due \times {\Sscript{\Sf{s}}} {\, \Sscript{\vartheta}} \, X \ar@{->}^-{}[rr]  \ar@{->}_-{\Sscript{id \,  \times  \, F}}[dd] & & X  \ar@{->}^-{\Sscript{F}}[dd] \\  & & \\ H_{\Sscript{1}} \, \due \times {\Sscript{\Sf{s}}} {\, \Sscript{\vartheta'}} \,  X'  \ar@{->}^-{}[rr] & & X'.  }
\end{gathered}
\end{equation}
commute. \emph{An isomorphism of left principal bisets} is a morphism whose underlying map is bijective.   As in the geometric case we have:

\begin{proposition}\label{prop:pbg}
Given two groupoids $\gG$ and $\hH$. Then any morphism between left principal $(\hH, \gG)$-bisets is an isomorphism. 
\end{proposition}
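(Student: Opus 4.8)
The plan is to prove that the underlying map $F: X \to X'$ of such a morphism is a bijection; once this is done, the set-theoretic inverse $F^{-1}$ is automatically compatible with $\varsigma, \vartheta$ and equivariant for both actions (this follows formally from the equivariance of $F$), so it is itself a morphism of bisets and $F$ is an isomorphism. The essential tools are the left principality of \emph{both} bisets: on the one hand the surjectivity of $\nabla'$ (condition (P-2) for $X'$), and on the other hand the division map $\delta$ attached to $(X,\varsigma,\vartheta)$ via \eqref{Eq:can} together with the freeness of the left $\hH$-action on $X'$ recorded just after \eqref{Eq:d3}. I note in advance that only the $\hH$-equivariance of $F$ and its compatibility with $\varsigma$ and $\vartheta$ will enter the argument; the $\gG$-equivariance is not needed, reflecting the fact that ``left principal'' is a condition on the $\hH$-action.

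For surjectivity I would begin from an arbitrary $x' \in X'$. Condition (P-1) for $(X,\varsigma,\vartheta)$ yields some $x \in X$ with $\varsigma(x) = \varsigma'(x')$, and since $F$ is compatible with the source maps, $\varsigma'(F(x)) = \varsigma(x) = \varsigma'(x')$, so $(F(x),x')$ lies in $X'\, \due \times {\Sscript{\varsigma'}} {\, \Sscript{\varsigma'}} \, X'$. Applying $(\nabla')^{-1}$ (condition (P-2) for $X'$) produces a unique $h \in H_{\Sscript{1}}$ with $\Sf{s}(h) = \vartheta'(x')$ and $h\,x' = F(x)$; a short check of the composability constraints then gives $\Sf{t}(h) = \vartheta'(F(x)) = \vartheta(x)$, so that $h^{-1}x$ is defined. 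By $\hH$-equivariance, $F(h^{-1}x) = h^{-1}F(x) = h^{-1}(h\,x') = x'$, which proves that $F$ is onto.

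For injectivity, suppose $F(x_1) = F(x_2)$. Compatibility with $\varsigma$ gives $\varsigma(x_1) = \varsigma(x_2)$, hence $(x_1,x_2) \in X\, \due \times {\Sscript{\varsigma}} {\, \Sscript{\varsigma}} \, X$, and (P-2) for $X$ furnishes $h := \delta(x_1,x_2) \in H_{\Sscript{1}}$ with $h\,x_2 = x_1$ and $\Sf{s}(h) = \vartheta(x_2)$ by \eqref{Eq:d2} and \eqref{Eq:d1}. Applying $F$, using $\hH$-equivariance and $F(x_1) = F(x_2)$, I get $h\,F(x_2) = F(h\,x_2) = F(x_1) = F(x_2)$. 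Since the left $\hH$-action on $X'$ is free, this forces $h = \iota_{\Sscript{\vartheta'(F(x_2))}} = \iota_{\Sscript{\vartheta(x_2)}}$, whence $x_1 = h\,x_2 = x_2$.

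I do not anticipate a genuine obstacle: this is the groupoid-biset analogue of the classical statement that a morphism of torsors is invertible, and the real content is already packaged in the bijectivity of $\nabla$ and the freeness of the action. The only points requiring care are the bookkeeping of the source/target composability conditions, so that each expression such as $h^{-1}x$ is legitimate, and the (formal) verification that the inverse of the bijection $F$ inherits equivariance; I expect these to be entirely routine.
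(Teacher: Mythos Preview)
Your proof is correct and follows essentially the same approach as the paper's: both arguments use surjectivity of $\varsigma$ plus the bijection $\nabla'$ (equivalently, Lemma~\ref{lema:orbit}) to produce a preimage for surjectivity, and the division map $\delta$ together with freeness of the $\hH$-action on $X'$ for injectivity. The only cosmetic differences are that the paper invokes Lemma~\ref{lema:orbit} rather than naming $\delta$ explicitly, and solves $h'F(x)=x'$ directly instead of first finding $hx'=F(x)$ and then applying $h^{-1}$.
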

\begin{proof}
Let  $F: (X,\varsigma, \vartheta) \to (X', \varsigma', \vartheta')$ be a morphism of left principal $(\hH, \gG)$-bisets. We  first show that  $F$ is injective. So take $x, y \in X$ such that $F(x )=F(y)$, whence $\varsigma(x)=\varsigma(y)$. By Lemma \ref{lema:orbit}, we know that there exists $h \in H_{\Sscript{1}}$ with  $\Sf{s}(h)=\vartheta(x)$ such that $hx=y$. Therefore, we have $F(hx)=F(y)=hF(x)=F(x)$ and so $h=\iota_{\Sscript{\vartheta(x)}}$, since the left action is free. This shows that $x=y$. The surjectivity of $F$ is derived as follows. Take an arbitrary element $x' \in X'$ and consider its image $\varsigma'(x') \in G_{\Sscript{0}}$. Since $\varsigma$ is surjective, there exists $x \in X$ such that $\varsigma(x) =\varsigma'F(x)=\varsigma'(x')$. This means that $F(x)$ and $x'$ are in the same orbit, so there exists $h' \in H_{\Sscript{1}}$ (with $\Sf{s}(h')=\vartheta(x)$) such that $h'F(x)=F(h'x)=x'$, which shows that $F$ is surjective.   
\end{proof}

\begin{remark}\label{remark:Entredosmares}
By Proposition \ref{prop:pbg}, the category of left principal bisets $\mathsf{PB}^{\Sscript{l}}(\hH,\gG)$ is actually a groupoid (not necessarly  a small category). On the other hand, notice that if $(X,\varsigma, \vartheta)$ is a left principal $(\hH,\gG)$-biset, then its opposite $(X^{\Sscript{o}}, \vartheta, \varsigma)$ is a right principal $(\gG,\hH)$-biset, where the underlying set still the same set $X$ while the actions were switched by using the inverse maps of both groupoids. This in fact establishes an isomorphism of categories between $\mathsf{PB}^{\Sscript{l}}(\hH,\gG)$ and the category of right principal bisets  $\mathsf{PB}^{\Sscript{r}}(\gG,\hH)$.
\end{remark}

\begin{remark}\label{remark:TanFrida}
Given $(X,\varsigma, \vartheta)$ an $(\hH,\gG)$-biset and $(X',\varsigma', \vartheta')$  a $(\gG,\kK)$-biset.   One can  endow the fibre product $X\, \due \times {\Sscript{\varsigma}} {\,\Sscript{\vartheta'}} \, X'$ within a structure of an $(\hH, \kK)$-biset. Furthermore, $\gG$ also acts on this set by the action $(x,x').g=(xg,g^{-1}x')$, for $g \in G_{\Sscript{1}}$, $(x,x') \in X\, \due \times {\Sscript{\varsigma}} {\,\Sscript{\vartheta '}} \, X'$ with $\Sf{t}(g)= \varsigma(x)=\vartheta'(x')$. Denote by  $X\tensor{\gG}X':= \big(X\, \due \times {\Sscript{\varsigma}} {\,\Sscript{\vartheta'}} \, X'\big)/\gG $ its orbit set, then clearly this set inherits a structure of $(\hH, \kK)$-biset. This is \emph{the tensor product of bisets}, also known as  \emph{le produit contract\'e} \cite[D\'efinition 1.3.1 page 114]{Giraud:1971},  \cite[Chap.III, \S 4, 3.1]{DemGab:GATIGAGGC}. 
It turns out that, if $(X,\varsigma, \vartheta)$ is a left principal biset and $(X',\varsigma', \vartheta')$ is a left principal biset, then $X\tensor{\gG}X'$ is a left principal $(\hH,\kK)$-biset.  Moreover, one can show that the tensor product (over different groupoids) is associative, up to a natural isomorphism. This defines the bicategory $\mathsf{PB}^{\Sscript{l}}$ of left principal bisets. Analougly, we have the bicategories $\mathsf{PB}^{\Sscript{r}}$ and  $\mathsf{PB}^{\Sscript{b}}$ (of  principal bisets).

For a single $0$-cell, i.e., a groupoid $\gG$,  the category $\mathsf{PB}^{\Sscript{b}}(\gG,\gG)$ turns to be a \emph{bigroup} (or \emph{a categorical group}).  
Moreover, in analogy with the group case, one can construct with the help of Proposition \ref{prop:pbg} and by using morphisms between left translation groupoids,  a presheaf $\bB\gG: \Sf{Sets}^{\Sscript{op}} \longrightarrow 2\text{-}\Sf{Grpds}$ to the category of $2$-groupoids known as \emph{the classifying 2-stack} of the groupoid $\gG$ (compare with Example \ref{exam:pb}).
\end{remark} 

\subsection{Principal groupoids-biset versus weak equivalences}\label{ssec:WE} 
 A morphism of groupoids $\phi: \mathscr{H} \to \mathscr{G}$ is said to be a \emph{weak equivalence} if it satisfies the following two conditions:
\begin{enumerate}[(WE-1)]
\item The composition map $\xymatrix{ G_{\Sscript{1}} \, \due \times {\Sscript{\Sf{s}}} {\, \Sscript{\phi_{\Sscript{0}}}} \, H_{\Sscript{0}} \ar@{->}^-{\Sscript{pr_1}}[r] & G_{\Sscript{1}}  \ar@{->}^-{\Sscript{\Sf{t}}}[r] & G_{\Sscript{0}} }$ is surjective.\label{Cond1}

\item The following diagram is cartesian 
$$
\xymatrix@R=7pt{ H_{\Sscript{1}} \ar@{->}_-{(\Sf{s},\Sf{t})}[dd]  \ar@{->}^-{\phi_{\Sscript{1}}}[rr] & &  G_{\Sscript{1}} \ar@{->}^-{(\Sf{s},\Sf{t})}[dd]\\ & & \\ H_{\Sscript{0}} \times H_{\Sscript{0}} \ar@{->}^-{\phi_{\Sscript{0}} \times \phi_{\Sscript{0}}}[rr] & & G_{\Sscript{0}} \times G_{\Sscript{0}} }
$$ 
Equivalently  there is a bijection $\Gamma: H_{\Sscript{1}} \cong H_{\Sscript{0}} \,\, \due \times {\Sscript{\phi_0}} {\, \Sscript{\Sf{s}}} \, G_{\Sscript{1}}\,\, \due \times {\Sscript{\Sf{t}}} {\, \Sscript{\phi_0}}  \, H_{\Sscript{0}}$ such that $pr_{\Sscript{2}} \circ \Gamma=\phi_{\Sscript{0}}$  and $(pr_{\Sscript{1}},\,pr_{\Sscript{3}}) \circ \Gamma= (\Sf{s},\Sf{t})$. \label{Cond2}
\end{enumerate}

In categorical terms, condition (WE-1) says that $\phi$ is an \emph{essentially surjective} functor:  Each object of $\mathscr{G}$ is isomorphic to the  image by $\phi$ of an object in $\mathscr{H}$. The second condition, means that $\phi$ is \emph{fully faithful}: If $u, v$ are two objects in $\mathscr{H}$ then $\phi$ defines a bijection between the sets of arrows $\mathscr{H}(u, v)$ and  $\mathscr{G}\big(\phi_{\Sscript{0}}(u), \phi_{\Sscript{0}}(v)\big)$. Both properties classically characterize  functors which define  equivalences of categories. 

Two groupoids $\gG$ and $\hH$ are said to be \emph{weakly equivalent} when there exists a third groupoid $\kK$ with a diagram (i.e.,  a \emph{span}) of weak equivalences:
$$
\xymatrix@R=7pt{ & \ar@{->}_-{}[ld] \kK \ar@{->}^-{}[rd] &  \\  \gG  & &   \hH.}
$$

For sake of completeness, next  we give a result which relate the notion of principal biset with that of weak equivalence.

\begin{proposition}\label{prop:pb}
Let $\gG$ and $\hH$ be two groupoids. Assume that there is $(X,\varsigma, \vartheta)$ a  principal $(\hH, \gG)$-biset. Then the canonical morphisms of groupoids 
$$
\xymatrix@R=7pt{ & \ar@{->}_-{\Sscript{\Theta}}[ld] \hH \lJoin X \rJoin \gG \ar@{->}^-{\Sscript{\Sigma}}[rd] &  \\  \hH  & &   \gG}
$$
are weak equivalences, where $\Theta$, $\Sigma$ are as in \eqref{Eq:t} and \eqref{Eq:s}. In particular, $\gG$ and $\hH$ are weakly equivalent. 
\end{proposition}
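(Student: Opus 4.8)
The plan is to verify the two defining conditions (WE-1) and (WE-2) of a weak equivalence separately for each of the two projections $\Sigma$ and $\Theta$ of \eqref{Eq:t} and \eqref{Eq:s}, using that $(X,\varsigma,\vartheta)$ is a principal biset and hence \emph{both} left and right principal. The point to keep in mind is an asymmetry: the essential surjectivity and the full faithfulness of $\Sigma$ will be powered by the left-principal data, namely condition (P-1) for $\varsigma$ together with the bijection $\nabla$ of \eqref{Eq:can} (equivalently the division map $\delta$ of \eqref{Eq:d1}--\eqref{Eq:d3}), whereas those of $\Theta$ will be powered by the mirror, right-principal data attached to $\vartheta$ and the right $\gG$-action. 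This is precisely why genuine two-sided principality is required and not merely left principality.

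First I would treat $\Sigma$, whose object map is $\varsigma$ and whose arrow map sends $(h,x,g)\mapsto g$. For (WE-1) it suffices to observe that for any object $z\in G_{\Sscript{0}}$, condition (P-1) provides $x\in X$ with $\varsigma(x)=z$, and then the pair $(\iota_{\Sscript{z}},x)$ realises $z$ as a target of the required composite; thus $\Sigma$ is essentially surjective. For (WE-2) I would show directly that the assignment $(h,x,g)\mapsto\big(x,\,g,\,hxg^{-1}\big)$ is a bijection from the arrow set of $\hH\lJoin X\rJoin\gG$ onto the set of triples $(x,g,y)$ with $\varsigma(x)=\Sf{s}(g)$ and $\Sf{t}(g)=\varsigma(y)$, which is exactly the cartesian-square condition for $\Sigma$. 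Well-definedness and injectivity are immediate; for surjectivity, given such a triple one must recover a \emph{unique} $h$ solving $hxg^{-1}=y$. Since $\varsigma(xg^{-1})=\Sf{t}(g)=\varsigma(y)$, the bijectivity of $\nabla$, equivalently $h=\delta(y,xg^{-1})$, furnishes this $h$ uniquely.

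The step I expect to be the real (if mild) obstacle is checking that the reconstructed $h$ actually defines a \emph{legitimate} arrow of the translation groupoid, that is, that the structural constraints $\Sf{s}(h)=\vartheta(x)$ and $\Sf{s}(g)=\varsigma(x)$ hold automatically. The second is part of the hypothesis on $(x,g,y)$, while the first follows from \eqref{Eq:d1} together with the biset identity $\vartheta(xg^{-1})=\vartheta(x)$. Keeping this bookkeeping straight, namely which fibre product each element inhabits and that $\delta$ lands in the isotropy over the correct source object, is the only genuinely delicate part of the computation; everything else is routine manipulation of the groupoid actions.

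Finally I would handle $\Theta$ (object map $\vartheta$, arrow map $(h,x,g)\mapsto h$) as the mirror image of the above: (WE-1) uses surjectivity of $\vartheta$ coming from the right-principal counterpart of (P-1), and (WE-2) uses the right-principal bijection $X\times_{G_{\Sscript{0}}}G_{\Sscript{1}}\to X\times_{H_{\Sscript{0}}}X$, $(x,g)\mapsto(xg,x)$, to recover from a triple $(x,h,y)$ the unique $g$ solving $yg=hx$ (equivalently $hxg^{-1}=y$), with the validity check $\Sf{s}(g)=\varsigma(x)$ following as before. Rather than redo this verbatim, one may instead invoke Remark \ref{remark:Entredosmares}: passing to the opposite principal $(\gG,\hH)$-biset $(X^{\Sscript{o}},\vartheta,\varsigma)$ interchanges the two projections, so that $\Theta$ becomes an instance of the already-treated $\Sigma$. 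Once both $\Sigma$ and $\Theta$ are shown to be weak equivalences, the span displayed in the statement exhibits, by the very definition of weakly equivalent groupoids, that $\gG$ and $\hH$ are weakly equivalent, which completes the proof.
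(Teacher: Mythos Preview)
Your proposal is correct and follows essentially the same route as the paper: for $\Sigma$ you use surjectivity of $\varsigma$ for (WE-1) and the explicit bijection $(h,x,g)\mapsto(x,g,hxg^{-1})$ with inverse $(x,g,y)\mapsto(\delta(y,xg^{-1}),x,g)$ for (WE-2), exactly as the paper does, and then dispose of $\Theta$ by the mirror (right-principal) argument. Your extra remark that one may alternatively deduce the $\Theta$ case from the $\Sigma$ case via the opposite biset of Remark~\ref{remark:Entredosmares} is a nice shortcut the paper does not make explicit.
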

\begin{proof}
We only show that if $(X,\varsigma,\vartheta)$ is a left principal $(\hH,\gG)$-biset, then the canonical morphism 
$$
\Sigma:  \hH \lJoin  X \rJoin \gG \longrightarrow \gG,\quad \Big( \big((h,x,g),x\big)  \longmapsto (g, \varsigma(x) \Big)
$$
is a weak equivalence. The proof  of the fact that  $\Theta$ is a weak equivalence follows similarly from the assumption that $(X,\varsigma,\vartheta)$ is right principal $(\hH, \gG)$-biset. 

Condition (WE-1) for $\Sigma$ is clear, since $\varsigma$ is surjective by condition (P-1) of Definition \ref{def:pbset}.  Consider the map  
$$
\xymatrix@R=0pt{ 
\Gamma: H_{\Sscript{1}} \, \due \times {\Sscript{\Sf{s}}} {\; \Sscript{\vartheta}}\, X\, \due  \times {\Sscript{\varsigma}} {\,\Sscript{\Sf{s}}}\, G_{\Sscript{1}} \ar@{->}^-{}[rr] & &  X \, \due \times {\Sscript{\varsigma}} {\; \Sscript{\Sf{s}}}\, G_{\Sscript{1}}\, \due  \times {\Sscript{\Sf{t}}} {\,\Sscript{\varsigma}}\, X \\  \big(h,x,g \big) \ar@{|->}^-{}[rr]& & \big( x,g,hxg^{-1}\big).  }
$$
Using the map $\delta: X\, \due  \times {\Sscript{\varsigma}} {\; \Sscript{\varsigma}}\, X \to H_{\Sscript{1}}$ resulting from condition (P-2) on $(X,\varsigma, \vartheta)$ and which satisfies equations \eqref{Eq:d1}-\eqref{Eq:d3}, we define the inverse of $\Gamma$ to be the  map: 
$$
\xymatrix@R=0pt{ 
\Gamma^{-1}: X \, \due \times {\Sscript{\varsigma}} {\; \Sscript{\Sf{s}}}\, G_{\Sscript{1}}\, \due  \times {\Sscript{\Sf{t}}} {\,\Sscript{\varsigma}}\, X  \ar@{->}^-{}[rr] & &  
H_{\Sscript{1}} \, \due \times {\Sscript{\Sf{s}}} {\; \Sscript{\vartheta}}\, X\, \due  \times {\Sscript{\varsigma}} {\,\Sscript{\Sf{s}}}\, G_{\Sscript{1}} \\  \big(x,g,y \big) \ar@{|->}^-{}[rr]& & \big( \delta(y,xg^{-1}), x,g\big), }
$$
which gives condition (WE-2) for $\Sigma$.
\end{proof}

\begin{remark}\label{remak:tusojosverdes}
As we have seen in Remark \ref{remark:Entredosmares}, the opposite of left principal $(\hH,\gG)$-biset is a right principal $(\gG,\hH)$-biset. Thus the opposite of principal biset is also a principal biset. In this way,  Proposition \ref{prop:pb} says that the ``equivalence relation" between groupoids defined by '\emph{being connected by a principal biset'}  is contained in the equivalence relation defined by '\emph{being weakly equivalent}'. An interesting question is then to check if both relations are the same. Precisely, one can  ask whether two weakly equivalent  groupoids  $\hH$ and $\gG$ are connected by a certain principal $(\hH,\gG)$-biset. The complete answer was recently given in \cite[Theorem 2.9]{Kaoutit/Kowalzig:14} (see Remark \ref{remark:serastu} below for these equivalence relations in Hopf algebroids context).
\end{remark}

\subsection{Transitive groupoids are characterized by weak equivalences}\label{ssec:TGrpd}
This subsection is the main motivation for the forthcoming sections. Here we show perhaps a well known result that characterizes transitive groupoids by means of weak equivalences and principal groupoids-bisets.
\begin{proposition}\label{prop:grpd}
Let $\mathscr{G}$ be a groupoid. Then the following are equivalent:
\begin{enumerate}[(i)]
\item For every map $\varsigma: X \to G_{\Sscript{0}}$, the induced morphism of groupoids $\phi^{\Sscript{\varsigma}}: \mathscr{G}^{\Sscript{\varsigma}} \to \mathscr{G}$ is a weak equivalence;
\item $\mathscr {G}$ is a transitive groupoid;
\item For every map $\varsigma: X \to G_{\Sscript{0}}$, the pull-back biset $\phi^{\Sscript{\varsigma^{\,*}}}(\uU(\gG))$ is a principal $(\gG,\gG^{\Sscript{\varsigma}})$-biset.
\end{enumerate}
\end{proposition}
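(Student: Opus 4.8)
The plan is to prove the three statements equivalent by showing that, for a \emph{fixed} map $\varsigma: X \to G_{\Sscript{0}}$, each of the conditions ``$\phi^{\Sscript{\varsigma}}$ is a weak equivalence'' in (i) and ``$\phi^{\Sscript{\varsigma^{\,*}}}(\uU(\gG))$ is principal'' in (iii) is equivalent to one and the same elementary property of $\varsigma$, namely
\begin{equation*}
C(\varsigma):\quad \text{for every } z\in G_{\Sscript{0}} \text{ there is } g\in G_{\Sscript{1}} \text{ with } \Sf{t}(g)=z \text{ and } \Sf{s}(g)\in\varsigma(X).
\end{equation*}
Thus I would first establish the two pointwise equivalences (i)$_{\Sscript{\varsigma}}\Leftrightarrow C(\varsigma)\Leftrightarrow$(iii)$_{\Sscript{\varsigma}}$, and then compare the quantified version ``$C(\varsigma)$ for all $\varsigma$'' with the transitivity in (ii). The leverage in both pointwise reductions is that, of the two clauses defining the relevant notion, one is automatic (independent of $\gG$) while the remaining one is \emph{verbatim} $C(\varsigma)$.

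For (i): condition (WE-\ref{Cond2}) holds for $\phi^{\Sscript{\varsigma}}=(pr_{\Sscript{2}},\varsigma)$ whatever $\varsigma$ is, because the definition of $\gG^{\Sscript{\varsigma}}$ makes the square in (WE-\ref{Cond2}) cartesian: the required bijection $G^{\Sscript{\varsigma}}{}_{\Sscript{1}}\cong X\,\due\times{\Sscript{\varsigma}}{\Sscript{\Sf{s}}}\,G_{\Sscript{1}}\,\due\times{\Sscript{\Sf{t}}}{\Sscript{\varsigma}}\,X$ is the mere reindexing $(x,g,x')\mapsto(x',g,x)$. Hence $\phi^{\Sscript{\varsigma}}$ is a weak equivalence iff (WE-\ref{Cond1}) holds, and unravelling the composite $G_{\Sscript{1}}\,\due\times{\Sscript{\Sf{s}}}{\Sscript{\varsigma}}\,X\overset{pr_{\Sscript{1}}}{\longrightarrow}G_{\Sscript{1}}\overset{\Sf{t}}{\longrightarrow}G_{\Sscript{0}}$ shows that (WE-\ref{Cond1}) is exactly $C(\varsigma)$.

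For (iii): by the intro's description the biset $\phi^{\Sscript{\varsigma^{\,*}}}(\uU(\gG))$ has underlying set $Y=G_{\Sscript{1}}\,\due\times{\Sscript{\Sf{s}}}{\Sscript{\varsigma}}\,X$, and it is a \emph{left} principal $(\gG,\gG^{\Sscript{\varsigma}})$-biset by the general pull-back construction of subsection \ref{ssec:biset} (the unit biset $\uU(\gG)$ being principal). So being principal only adds right-principality, whose two clauses I treat separately. The right analogue of the canonical map \eqref{Eq:can}, for the right $\gG^{\Sscript{\varsigma}}$-action $(g,x)\cdot(x,a,x')=(ga,x')$, is bijective for \emph{any} $\varsigma$: given $(g_{\Sscript{1}},x_{\Sscript{1}}),(g_{\Sscript{2}},x_{\Sscript{2}})\in Y$ with $\Sf{t}(g_{\Sscript{1}})=\Sf{t}(g_{\Sscript{2}})$, the unique arrow of $\gG^{\Sscript{\varsigma}}$ sending $(g_{\Sscript{2}},x_{\Sscript{2}})$ to $(g_{\Sscript{1}},x_{\Sscript{1}})$ is $(x_{\Sscript{2}},g_{\Sscript{2}}^{-1}g_{\Sscript{1}},x_{\Sscript{1}})$, which one checks lies in $G^{\Sscript{\varsigma}}{}_{\Sscript{1}}$ using only the groupoid inverse and no transitivity. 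The remaining clause is surjectivity of the left moment $\Sf{t}\circ pr_{\Sscript{1}}:Y\to G_{\Sscript{0}}$, which is again literally $C(\varsigma)$. This yields (i)$_{\Sscript{\varsigma}}\Leftrightarrow$(iii)$_{\Sscript{\varsigma}}$ and identifies both with $C(\varsigma)$; note that $Y$ and the domain of the (WE-\ref{Cond1}) map coincide, with matching moment, which is what makes the two reductions land on the same condition.

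Finally I would compare with transitivity. If $\gG$ is transitive then, for $X\neq\emptyset$, any $z$ is joined to a chosen $\varsigma(x)$, so $C(\varsigma)$ holds; this gives (ii)$\Rightarrow$(i),(iii). Conversely it suffices to feed the one-point maps $x_{\Sscript{0}}:\{\ast\}\to G_{\Sscript{0}}$ into the equivalent conditions: here $\gG^{\Sscript{x_0}}$ reduces to the isotropy group of \eqref{Eq:isotropy}, and $C(x_{\Sscript{0}})$ asserts that $x_{\Sscript{0}}$ is connected to every object; composing such arrows produces an arrow between any prescribed pair, i.e.\ transitivity. The steps I expect to demand the most care are the bookkeeping ones: keeping the two moment maps of the pull-back biset straight against the source/target conventions of Definitions \ref{def:Gset}--\ref{def:biset}, and confirming that the two ``automatic'' clauses, the cartesianness (WE-\ref{Cond2}) and the bijectivity of the right canonical map, genuinely require no hypothesis on $\gG$. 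The one honestly delicate point, which I would flag explicitly, is the degenerate case $X=\emptyset$ with $G_{\Sscript{0}}\neq\emptyset$: there $\phi^{\Sscript{\varsigma}}$ is never essentially surjective, so (i) and (iii) must be read as ranging over maps with non-empty domain.
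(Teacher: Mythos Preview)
Your proposal is correct and follows essentially the same approach as the paper. The paper proves the cycle $(i)\Rightarrow(ii)\Rightarrow(iii)\Rightarrow(i)$ directly, while you organise the argument around the pivot condition $C(\varsigma)$; but the substantive steps coincide: both of you observe that (WE-\ref{Cond2}) for $\phi^{\Sscript{\varsigma}}$ is automatic from the very definition of $\gG^{\Sscript{\varsigma}}$, that the right canonical map $\nabla'$ is bijective for any $\varsigma$ via the explicit formula $(x',f^{-1}f',x)$ (the paper writes this out as $\nabla'{}^{-1}$), and that the only non-automatic ingredient in each of (i) and (iii) is the surjectivity of $\Sf{t}\circ pr_{\Sscript{1}}:G_{\Sscript{1}}\,\due\times{\Sscript{\Sf{s}}}{\,\Sscript{\varsigma}}\,X\to G_{\Sscript{0}}$, i.e.\ your $C(\varsigma)$. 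Your flag on the degenerate case $X=\emptyset$ with $G_{\Sscript{0}}\neq\emptyset$ is a genuine caveat that the paper's statement silently ignores; it does not affect the intended content but is worth noting.
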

\begin{proof}
$(i)  \Rightarrow (ii)$. Is immediate. 

$(ii) \Rightarrow (iii)$. By definition $\phi^{\Sscript{\varsigma^{\,*}}}(\uU(\gG))$ is a left principal $(\gG,\gG^{\Sscript{\varsigma}})$-biset. We need then to check that, under condition $(ii)$, it is also right principal $(\gG,\gG^{\Sscript{\varsigma}})$-biset. This biset is given  by $\phi^{\Sscript{\varsigma^{\,*}}}(\uU(\gG))=\big(G_{\Sscript{1}} \, \due \times {\Sscript{\Sf{s}}} {\, \Sscript{\varsigma}} \, X, \td{\Sf{t}},  pr_{\Sscript{2}} \big)$, where $\td{\Sf{t}}:= \Sf{t} \circ pr_{\Sscript{1}}:  G_{\Sscript{1}} \, \due \times {\Sscript{\Sf{s}}} {\, \Sscript{\varsigma}} \, X \to G_{\Sscript{1}} \to G_{\Sscript{0}}$. 
The left and right actions are given as in equations \eqref{Eq:laction} and \eqref{Eq:raction} by
$$
g\rightharpoonup  (f,x) \,=\, (gf,x)\;\; \text{ and }\;\; (f,x) \leftharpoonup  (y,h,x)\,=\, (fh,y),
$$
for any $(f,x) \in G_{\Sscript{1}} \, \due \times {\Sscript{\Sf{s}}} {\, \Sscript{\varsigma}} \, X$, $g \in G_{\Sscript{1}}$ with $\Sf{s}(g)=\Sf{t}(f)$, and $h \in G_{\Sscript{1}}$ with $\Sf{s}(h)=\varsigma(y)$, $\Sf{t}(h)=\varsigma(x)$ and $\Sf{s}(f)=\Sf{t}(h)$.  Both conditions (1)-(2) in Definition \ref{def:biset} are then clearly satisfied. 
The right canonical map is defined by 
$$
\nabla': \Big( G_{\Sscript{1}} \, \due \times {\Sscript{\Sf{s}}} {\, \Sscript{\varsigma}} \, X\Big) \, \due \times {\Sscript{pr_2}} {\,\Sscript{\Sf{t}}} \, G^{\Sscript{\varsigma}}{}_{\Sscript{1}}  \longrightarrow \Big( G_{\Sscript{1}} \, \due \times {\Sscript{\Sf{s}}} {\, \Sscript{\varsigma}} \, X\Big) \, \due \times {\Sscript{\td{\Sf{t}}}} {\, \Sscript{\td{\Sf{t}}}} \, \Big(G_{\Sscript{1}} \, \due \times {\Sscript{\Sf{s}}} {\, \Sscript{\varsigma}} \, X \Big),\;\, \Big[ \Big( \big( f,x\big), \big(y,h,x\big)\Big) \longmapsto \Big(\big(f,x \big),  \big(fh,y \big)\Big) \Big].
$$

The map $\td{\Sf{t}}$ is clearly surjective, since $\gG$ is transitive. This gives condition (P-1) of Definition \ref{def:pbset} for  $\phi^{\Sscript{\varsigma^{\,*}}}(\uU(\gG))$ as a  right principal biset.   Now, we need to check that $\nabla'$ is bijective, that is, condition (P-2) is fulfilled. However, the inverse of this map is easily shown to be the following map
$$
\nabla'{}^{-1}:   \Big( G_{\Sscript{1}} \, \due \times {\Sscript{\Sf{s}}} {\, \Sscript{\varsigma}} \, X\Big) \, \due \times {\Sscript{\td{\Sf{t}}}} {\, \Sscript{\td{\Sf{t}}}} \, \Big(G_{\Sscript{1}} \, \due \times {\Sscript{\Sf{s}}} {\, \Sscript{\varsigma}} \, X \Big)    \longrightarrow
\Big( G_{\Sscript{1}} \, \due \times {\Sscript{\Sf{s}}} {\, \Sscript{\varsigma}} \, X\Big) \, \due \times {\Sscript{pr_2}} {\,\Sscript{\Sf{t}}} \, G^{\Sscript{\varsigma}}{}_{\Sscript{1}} ,\;\, \Big[\Big( \big( f,x\big), \big(f',x'\big)\Big) \longmapsto \Big(\big(f,x \big),  \big(x',f^{-1}f',x \big)\Big)\Big].
$$

$(iii) \Rightarrow (i)$. If we assume that $\phi^{\Sscript{\varsigma^{\,*}}}(\uU(\gG))$ is a principal $(\gG,\gG^{\Sscript{\varsigma}})$-biset, then the map $\td{\Sf{t}}$ above should be surjective. Therefore, the map 
$$
\xymatrix{  G_{\Sscript{1}} \, \due \times {\Sscript{\Sf{s}}} {\, \Sscript{\varsigma}} \, X \ar@{->}^-{pr_{\Sscript{1}}}[r] & G_{\Sscript{1}} \ar@{->}^-{\Sf{t}}[r] & G_{\Sscript{0}} }
$$
is also surjective, which is condition (WE-1) for the morphism $\phi^{\Sscript{\varsigma}}$. Condition (WE-2) for this morphism is trivial, since by definition we know that $G^{\Sscript{\varsigma}}{}_{\Sscript{1}}= X\, \due \times {\Sscript{\varsigma}} {\,\Sscript{\Sf{t}}}\,   G_{\Sscript{1}} \; \due \times {\Sscript{\Sf{s}}} {\, \Sscript{\varsigma}} \, X $.
\end{proof}

\subsection{Correspondence between transitive groupoids and principal group-sets}\label{ssec:GPS}
A particular example of principal groupoids-bisets are, of course, principal group-sets. As we will see below transitive groupoids are characterized by these group-sets. Precisely, there is a (non canonical) correspondence between transitive groupoids and principal group-sets, as we will show in this subsection.

Let $\pi: P \to G_{\Sscript{0}}$ be a map and $G$ a group which acts on the left side of $P$. Recall that the triple $(P,G,\pi)$ is said to be a \emph{left principal $G$-set}, if the following conditions are satisfied:
\begin{enumerate}[({P'}1)]
\item $\pi$ is surjective; \label{GPS1}
\item $\pi(gp)=\pi(p)$, for every $p \in P$ and $g \in G$; \label{GPS2}
\item The canonical map $G \times P \longrightarrow P \, \due \times {\Sscript{\pi}}{\, \Sscript{\pi}} \, P $ sending $(g,p) \mapsto (gp,p)$ is bijective. \label{GPS3}
\end{enumerate}
Equivalently, the action is free and $G_{\Sscript{0}} $ is the orbit set of $P$. 
Comparing with Definition \ref{def:biset}, this means that the triple  $(P,*,\pi)$ with $*: P \to \{*\}$, is a principal left $(G,G_{\Sscript{0}})$-biset, where the group $G$ is considered as a groupoid with one object $\{*\}$ and $G_{\Sscript{0}}$ is considered as a groupoid whose underlying category is a discrete category (i.e., category with only identities arrows) with set of objects $G_{\Sscript{0}}$, and  acts trivially on $P$ along $\pi$.

In the previous situation, consider $P \times P$ as a left $G$-set by the diagonal action and denote by $G_{\Sscript{1}}:= (P \times P) / G$ its set of orbits. The pair $(G_{\Sscript{1}}, G_{\Sscript{0}})$ admits as follows a  structure of transitive groupoid. 
Indeed, let $(p,p') \in P\times P$ and denote by $[(p,p')] \in (P\times P)/G$ its equivalence class. The source and target are $\Sf{s}\big([(p,p')]\big)=\pi(p')$ and $\Sf{t}\big([(p,p')]\big)=\pi(p)$. The identity arrow of an object $x \in G_{\Sscript{0}}$ is given, using conditions (P'1)-(P'2), by the class $[(p,p)]$ where $\pi(p)=x$. Let $\fk{p}, \fk{q}$  be two equivalence classes in $G_{\Sscript{1}}$ such that $\Sf{s}(\fk{p})=\Sf{t}(\fk{q})$. Henceforth, if $(p,p')$ is a representative of $\fk{p}$,  then $\fk{q}$ can be represented by $(p',p'')$. The multiplication $\fk{p}  \fk{q}$ is then represented by $(p,p'')$. This  is a well defined multiplication since the action is free.  By conditions (P'1), (P'3), we have that $(G_{\Sscript{1}}, G_{\Sscript{0}})$ is a transitive  groupoid with a canonical morphism of groupoids 
$$
\xymatrix@R=22pt{  P \times P  \ar@{->>}[d] \ar@<0.6ex>@{->}|-{}[rr] \ar@<-0.55ex>@{->}|-{}[rr] & & \ar@{->}|-{}[ll] P \ar@{->>}[d] \\ G_{\Sscript{1}} := (P\times P) /G \ar@<0.6ex>@{->}|-{}[rr] \ar@<-0.6ex>@{->}|-{}[rr] & & \ar@{->}|-{}[ll] P/G:=G_{\Sscript{0}}. }
$$

Conversely, given a  transitive groupoid $\gG$, and fix an object $x \in G_{\Sscript{0}}$. Set $G:= \gG^{\Sscript{x}}$ the isotropy group of $x$ and let $P:=\Sf{t}^{-1}(\{x\})$ be the set of all arrows with target this $x$, i.e.,~ the left star set of $x$. Consider the left $G$-action $G \times P \to P$ derived from the multiplication of $\gG$. 
Since $\gG$ is transitive, the triple $(P,G,\pi)$ satisfies then the above conditions (P'1)-(P'3), which means that it is a left principal $G$-set.

\section{Hopf algebroids: comodules algebras, principal bundles, and weak equivalences}\label{sec:HAlgd}
This section contains the definitions of commutative Hopf algebroids and theirs bicomodules algebras. All definitions are given in the algebraic way.  Nevertheless, we will use a slightly superficial language of presheaves, sufficiently enough  to make clearer the connection with the contents of Section \ref{sec:Grpd}.  

Parallel to subsections  \ref{ssec:biset}  and \ref{ssec:WE}, we present a brief contents on principal bibundles between Hopf algebroids and their connection with weak equivalences. Dualizable objects in the category of (right) comodules are treated in the last subsection, where we also proof some useful lemmata.

\subsection{Preliminaries and basic notations}\label{sec:1}
We work over a  commutative base field $\Bbbk$. Unadorned tensor product $-\tensor{}-$ stands for the tensor product of $\Bbbk$-vector spaces $-\tensor{\Bbbk}-$. By $\Bbbk$-algebra, or algebra,  we understand commutative $\Bbbk$-algebras, unless otherwise specified.  The category of (right) $A$-modules over an algebra $A$, is denoted by $\rmod{A}$. The $\Bbbk$-vector space of all $A$-\emph{linear maps} between two (right) $A$-modules $M$ and $N$, is denoted by $\hom{A}{M}{N}$. When $N=A$ is the regular module, we denote $M^*:=\hom{A}{M}{A}$.

Given two algebras $R, S$, we denote by $S(R):={\rm Alg}_{\Sscript{\Bbbk}}(S,R)$ the set of all $\Bbbk$-algebra maps from $S$ to $R$.   In what follows, a \emph{presheaf} of sets (of groups, or of groupoids) stands for a functor from the category of algebras ${\rm Alg}_{\Sscript{\Bbbk}}$ to the category of sets (groups, or  groupoids).
Clearly, to any algebra $A$, there is an associated presheaf which sends $C \to A(C)={\rm Alg}_{\Sscript{\Bbbk}}(A,C)$, thus, the presheaf represented by $A$.

For two algebra maps $\sigma: A \to T$ and $\gamma: B \to T$  we denote by ${}_{\scriptscriptstyle{\sigma}}T_{\scriptscriptstyle{\gamma}}$ (respectively, ${}_{\scriptscriptstyle{\sigma}}T$ or $T_{\scriptscriptstyle{\gamma}}$, if one of the algebra maps is the identity) the underlying $(A,B)$-bimodule of $T$ (respectively, the underlying $A$-module of $T$) whose left $A$-action is induced by $\sigma$ while its right $B$-action is induced by $\gamma$, that is, 
$$
a\,.\, t \,=\, \sigma(a)t,\quad t\,.\, b\,= \, t \gamma(b), \quad \text{ for every }\; a \in A,\; b \in B,\; t \in T.
$$ 

Assume there is an algebra map  $x \in  A(R)$. The extension functor $(-)_{\Sscript{x}}: \rmod{A} \to \rmod{R}$  is the functor which sends any  $A$-module $M$ to the extended $R$-module $M_{\Sscript{x}}=M \tensor{A}R$. In order to distinguish between two extension functors, we use the notation $M_{\Sscript{x}}:=M\tensor{x}R$ and $M_{\Sscript{y}}:=M\tensor{y}S$, whenever  another algebra map $y \in A(S)$ is given.

In the sequel we will use the terminology \emph{coring} (or \emph{cog\'ebro\"ide} as in \cite{Deligne:1990, Bruguieres:1994}) for coalgebra with possibly different left-right structures on its underlying modules over the base ring. We refer to \cite{BrzWis:CAC} for basic notions and properties of these objects.

\subsection{The $2$-category of Hopf algebroids}\label{ssec:H}
Recall from, {\em e.g.}, \cite{Ravenel:1986} 
that a {\em commutative Hopf algebroid}, or a {\em Hopf algebroid over a field $\Bbbk$}, is a pair $(A,\cH)$ of two  commutative $\Bbbk$-algebras,  together with algebra maps
$$
\etaup: A\tensor{}A \to \cH, \quad \varepsilon: \cH \to A, \quad \Delta: {}_{\scriptscriptstyle{\Sf{s}}}\cH_{\scriptscriptstyle{\Sf{t}}} \to {}_{\scriptscriptstyle{\Sf{s}}}\cH_{\scriptscriptstyle{\Sf{t}}}\,\tensor{A}\,{}_{\scriptscriptstyle{\Sf{s}}}\cH_{\scriptscriptstyle{\Sf{t}}}, \quad \mathscr{S}: {}_{\scriptscriptstyle{\Sf{s}}}\cH_{\scriptscriptstyle{\Sf{t}}} \to {}_{\scriptscriptstyle{\Sf{t}}}\cH_{\scriptscriptstyle{\Sf{s}}}
$$
and a  structure  $({}_{\scriptscriptstyle{\Sf{s}}}\cH_{\scriptscriptstyle{\Sf{t}}}, \Delta, \varepsilon)$ of an $A$-coring with $\mathscr{S}$ an $A$-coring map to the opposite coring. Here the source and the target are the algebra maps $\Sf{s}: A\to \cH$ and $\Sf{t}: A \to\cH$ defined by $\Sf{s}(a)=\etaup(a\tensor{}1)$ and $\Sf{t}(a)=\etaup(1\tensor{}a)$, for every $a \in A$. 
The map $\mathscr{S}$ is  called the \emph{antipode} of $\cH$ subject  to  the following equalities:
\begin{equation}\label{Eq:antipode}
\mathscr{S}^2 = \id, \quad \quad   \Sf{t}(\varepsilon(u))\,=\, \mathscr{S}(u_{\Sscript{(1)}})u_{\Sscript{(2)}}, \quad \quad \Sf{s}(\varepsilon(u))\,=\, u_{\Sscript{(1)}}\mathscr{S}(u_{\Sscript{(2)}}),\quad \text{for every  }\, u  \, \in \cH,
\end{equation}
where we used Sweedler's notation: $\Delta(u)= u_{\Sscript{(1)}}\tensor{A}u_{\Sscript{(2)}}$ and summation is understood. The algebras $A$ and $\cH$ are  called, respectively,  \emph{the base algebra} and \emph{the total algebra}  of the Hopf algebroid $(A, \cH)$.

As commutative Hopf algebra leads to an affine group scheme, a Hopf algebroid  leads to an affine groupoid scheme (i.e., a presheaf of groupoids).  More precisely, given a Hopf algebroid $(A, \cH)$ and  an  algebra $C$, reversing the structure of $(A,\cH)$ we have, in a natural way, a groupoid structure 
\begin{equation}\label{Eq:miacosa}
\mathscr{H}(C):\xymatrix@C=35pt{\cH(C)\ar@<1ex>@{->}|-{\scriptscriptstyle{\sf{s}^*}}[r] \ar@<-1ex>@{->}|-{\scriptscriptstyle{\sf{t}^*}}[r] & \ar@{->}|-{ \scriptscriptstyle{\varepsilon^*}}[l]A(C).}
\end{equation}
This structure is explicitly given as follows: the source and the target of a given arrow $g \in \cH(C)$ are, respectively,  $\Sf{s}^{\Sscript{*}}(g)=g \circ  \Sf{s}$ and $\Sf{t}^{\Sscript{*}}(g)=g \circ \Sf{t}$, the inverse is $g^{-1}=g \circ  \mathscr{S}$.  Given another arrow $f \in \cH(C)$ with $\Sf{t}^{\Sscript{*}}(f)=\Sf{s}^{\Sscript{*}}(g)$, then the groupoid multiplication is defined by the following algebra map
$$
g f: \cH \longrightarrow C, \quad\Big(   u \longmapsto f(u_{\Sscript{(1)}})g(u_{\Sscript{(2)}}) \Big),
$$ summation always understood. 
The identity arrow of an object $x \in A(C)$ is $ \varepsilon^{\Sscript{*}}(x)=x \circ  \varepsilon$.

The functor $\hH$ is referred to as \emph{the associated presheaf of groupoids} of the Hopf algebroid $(A,\cH)$, and the groupoids of equation \eqref{Eq:miacosa} are called \emph{the fibres of} $\hH$.  Depending on the handled situation, we will employ different notations  for the fibres of $\hH$ at an algebra $C$:
$$
\hH(C)\,:=\,\big(\cH(C),A(C)\big)\,:=\, \big( \hH_{\Sscript{1}}(C), \hH_{\Sscript{0}}(C)\big).
$$

The presheaf of groupoids $\hH^{\Sscript{op}}$ is defined to be the presheaf whose fibre at $C$ is the opposite groupoid $\hH(C)^{\Sscript{op}}$ (i.e., the same groupoid with the source interchanged by the target).

Examples of Hopf algebroids can be then proportioned using well known  constructions in groupoids, as we have seen in subsection \ref{ssec:basic}.

\begin{example}\label{exam:HAlgd1}
Let $A$ be an algebra and set $\cH:=A\tensor{}A$. Then the pair $(A,\cH)$ is clearly a Hopf algebroid with structure $\Sf{s}: A \to \cH$, $a \mapsto a\tensor{}1$, $\Sf{t}: A \to \cH$, $a \mapsto 1\tensor{}a$; $\Delta(a\tensor{}a') = (a\tensor{}1) \tensor{A} (1\tensor{}a')$, $\varepsilon(a\tensor{}a') = aa'$,  $\sS(a\tensor{}a') = a'\tensor{}a$.  Clearly, the fibres of the associated presheaf of groupoids $\hH$ are groupoids of pairs, as in Example \ref{exam:X}. Thus, $\hH \cong \big(\aA \times \aA,\aA\big)$ an isomorphism of presheaves of groupoids, where $\aA$ is the presheaf of sets attached to the algebra $A$.
\end{example}

\begin{example}\label{exam:HAlgd2}
Let $(B,\Delta, \varepsilon, \cS)$ be a commutative Hopf $\Bbbk$-algebra and $A$ a commutative right $B$-comodule algebra with coaction $A \to A\tensor{}B$, $a \mapsto a_{\Sscript{(0)}} \tensor{} a_{\Sscript{(1)}}$. This means that $A$ is right $B$-comodule and the coaction is an algebra map, see \cite[\S, 4]{Montgomery:1993}.  
Let $\bB$ be the affine $\Bbbk$-group attached to $B$. To any algebra $C$, one associated in a natural way, the following action groupoid as in Example \ref{exam:action}:
$$
(\aA \times \bB)(C): \xymatrix@C=35pt{A(C)\times B(C) \ar@<1ex>@{->}|-{\scriptscriptstyle{}}[r] \ar@<-1ex>@{->}|-{\scriptscriptstyle{}}[r] & \ar@{->}|-{ \scriptscriptstyle{}}[l]A(C),} 
$$  
where the source is given by the action $(x,g) \mapsto xg$ sending $a \mapsto (xg)(a)=x(a_{\Sscript{(0)}})g(a_{\Sscript{(1)}})$, and the target is the first projection. Consider, on the other hand, the algebra  $\cH= A\tensor{}B$ with  algebra extension $ \eta: A\tensor{}A \to  \cH$, $a'\tensor{}a \mapsto a'a_{\Sscript{(0)}}\tensor{}a_{\Sscript{(1)}}$. Then $(A,\cH)$ has  a structure of Hopf algebroid, known as a \emph{split Hopf algebroid}: 
$$
\Delta(a\tensor{}b) = (a\tensor{}b_{\Sscript{(1)}}) \tensor{A} (1_{\Sscript{A}}\tensor{}b_{\Sscript{(2)}}), \;\;\varepsilon(a\tensor{}b)=a\varepsilon(b),\;\; \sS(a\tensor{}b)= a_{\Sscript{(0)}}\tensor{}  a_{\Sscript{(1)}}\cS(b).
$$
Obviously, the  associated  presheaf of groupoids $\hH^{\Sscript{op}}$ (where $\hH$ is   the one associated to $(A,\cH)$) is canonically isomorphic to the action groupoids $\aA \times \bB$. Thus, we have an isomorphism   $\hH^{\Sscript{op}} \cong \big(\aA \times \bB,\aA\big)$ of presheaves of groupoids.
\end{example}

\begin{example}\label{exam:HAlgd3}
Let $A$ be an algebra and consider the commutative polynomial Laurent ring over $A\tensor{}A$, that is,   $\cH=(A\tensor{}A)[X,X^{-1}]$ with the canonical injection $\eta: A\tensor{}A \to \cH$. The pair $(A,\cH)$ is a Hopf algebroid with structure maps
$$
\Delta\big( (a\tensor{}a') X \big) \,=\, \big( (a\tensor{A}1) X \big) \tensor{A} \big( (1\tensor{}a') X \big),\;\; \varepsilon\big( (a\tensor{}a') X \big) \,=\, aa',\;\; \sS\big( (a\tensor{}a') X \big) \,=\, (a'\tensor{}a) X^{-1}.
$$
The fibres of the associated presheaf $\hH$ are described using the induced groupoid by the multiplicative affine $\Bbbk$-group, in the sense of Example \ref{exam:induced}. Precisely, take an algebra $C$, then $\cH(C)$ is canonically bijective to the set $A(C)\times \cG_{\Sscript{m}}(C)\times A(C)$, where $\cG_{\Sscript{m}}$ is the multiplicative affine $\Bbbk$-group.  This in fact induces, in a natural way,  an isomorphisms of groupoids $(\cH(C), A(C)) \cong \big(A(C)\times \cG_{\Sscript{m}}(C)\times A(C),A(C)\big)$, where the later is the induced groupoid by the group $\cG_{\Sscript{m}}(C)$, as in Example \ref{exam:induced}. As presheaves of groupoids,  we have then an isomorphism $\hH \cong \big(\aA \times \cG_{\Sscript{m}} \times \aA,\aA\big)$, where as above $\aA$ is the presheaf attached to the algebra $A$. 

There is in fact a more general construction: Take any commutative Hopf $\Bbbk$-algebra $B$, then for any algebra $A$, the pair $(A,A\tensor{}B\tensor{}A)$ admits a canonical structure of Hopf algebroid whose  associated presheaf is also of the form $\big(\aA \times \bB\times \aA,\aA\big)$, where $\aA$ and $\bB$ are as before. 
\end{example}

\begin{example}[Change of scalars]\label{exam:Scalars}
Let $(A,\cH)$ be a  Hopf algebroid over $\Bbbk$ and consider $L$ a field extension of $\Bbbk$. Then the pair of algebras $(\AL,\HL): =(A\tensor{\Bbbk}L, \cH\tensor{\Bbbk}L)$ admits, in a canonical way, a structure of Hopf algebroid over the field $L$. The structure maps are denoted using the subscript $L$, i.e., $\sL, \tL, \eL, ..$. If we denote by $\hHL: \Algl \to \Sf{Grpds}$ the presheaf of groupoids associated to $(\AL, \HL)$, then the usual hom-tensor adjunction shows  that  $\hHL$ factors through the forgetful functor $\Algl \to \Algk$. That is we have a commutative diagram of functors:
$$
\xymatrix@C=50pt{ \Algl \ar@{->}@/_2pc/^-{\hHL}[rr] \ar@{->}^-{\oO}[r] & \Algk \ar@{->}^-{\hH}[r]  & \Sf{Grpds}. }
$$ 
\end{example}

The notion of character group in commutative Hopf algebras context is naturally extended to that of character groupoids in commutative Hopf algebroids:
\begin{definition}\label{def:characters}
Let $(A, \cH)$ be a Hopf algebroid over a field $\Bbbk$ and $\hH$ its associated presheaf of groupoids. The {\em character groupoid} of $(A,\cH)$ is the fibre groupoid $\mathscr{H}(\Bbbk)=(\cH(\Bbbk),A(\Bbbk))$ at the base field $\Bbbk$.  Notice that the character groupoid might be empty (i.e.,~could be a category without objects). 
\end{definition}

The following definition, which we will frequently used  in the sequel, can be found in \cite[page 129]{Deligne:1990}. It is noteworthy to mention that in our case (i.e., the case of affine $\Bbbk$-schemes),  the base presheaf $\hHo$ of the presheaf $\hH$ associated to a given Hopf algebroid $(A,\cH)$ with $A\neq 0$, is always non empty. That is, the condition $\hHo \neq \emptyset$ in \emph{op.cit.}, is satisfied since $\hHo$ is represented by $A$ and there is always a field extension $L$ of $\Bbbk$ such that $A(L) \neq \emptyset$ as $A \neq 0$ (i.e., it have maximal ideals).

\begin{definition}\label{def:1}
Let $(A,\cH)$ be a Hopf algebroid and $\hH$ its associated presheaf of groupoids.  Given an algebra $C$, consider the fibre groupoid $\mathscr{H}(C)$. Two objects $x, y \in A(C)$ are said to be \emph{locally isomorphic}, in the sense of the  fpqc topology (or \emph{fpqc locally isomorphic}),
if there exists a faithfully flat extension $p: C \to C'$ and an arrow $g \in \cH(C')$ such that 
$$
p \circ  x \,=\, g \circ  \Sf{s}, \quad \text{ and } \quad p \circ  y \,= \, g \circ  \Sf{t}.
$$ 
We say that any two objects of $\mathscr{H}$ are  \emph{fpqc locally isomorphic} (without specifying the algebra $C$), if for any algebra $C$ and any two objects $x, y \in A(C)$, $x$ and $y$ are fpqc  locally isomorphic. 
\end{definition}

\begin{remark}\label{rem:Lkk}
In case we have $\hH(C) = \emptyset $, for some algebra $C$,  the condition of Definition \ref{def:1} is conventionally assumed to be verified for this $C$.  On the other hand, it is not difficult to check that  if there exists a field extension $L$ of $\Bbbk$ such that two objects of $ \hHL$ are fpqc locally isomorphic, then any  two objects of $\hH$  are also fpqc locally isomorphic. The converse is not immediate and follows from Theorem \ref{thm:A} below. More precisely, if $\hHo(\Bbbk) \neq \emptyset$ and any two objects of $\hH$ are fpqc locally isomorphic, then any two objects  of $\hHL$ are also fpqc locally isomorphic for any field extension $L$ of $\Bbbk$.
\end{remark}

If the presheaf $\mathscr{H}$ is \emph{fibrewise transitive}, that is,  each of its fibres  $\mathscr{H}(C)$ is a transitive groupoid, then obviously any two objects of $\mathscr{H}$ are fpqc locally isomorphic. For instance, this is the case for the Hopf algebroid $(A,A\tensor{}A)$, since in this case each of the groupoid's $\hH(C)$ is the groupoid of pairs, see Example \ref{exam:HAlgd1}. The same holds true for the class of Hopf algebroids described in Example \ref{exam:HAlgd3}.

A \emph{morphism} $\B{\phi}: (A,\cH) \to (B,\cK)$ {\em of Hopf algebroids} 
consists of a pair $\B{\phi}=(\phi_{\scriptscriptstyle{0}},\phi_{\scriptscriptstyle{1}})$ 
of algebra maps  $\phi_{\scriptscriptstyle{0}}: A \to B$ and $\phi_{\scriptscriptstyle{1}}: \cH \to \cK$ that are compatible,  in a canonical way,  with the structure maps of both $\cH$ and $\cK$. 
That is, the equalities 
\begin{eqnarray}
\phi_{\scriptscriptstyle{1}} \circ \Sf{s} \,\,=\,\,  \Sf{s} \circ \phi_{\scriptscriptstyle{0}}, & &
\phi_{\scriptscriptstyle{1}} \circ \Sf{t} \,\,=\,\,  \Sf{t} \circ \phi_{\scriptscriptstyle{0}},   \\
\Delta \circ \phi_{\scriptscriptstyle{1}} \,\,=\,\,  \chi \circ (\phi_{\scriptscriptstyle{1}} \tensor{A}\phi_{\scriptscriptstyle{1}})  \circ \Delta, && \varepsilon \circ \phi_{\scriptscriptstyle{1}}\,\,=\,\, \phi_{\scriptscriptstyle{0}} \circ \varepsilon, \\ 
\mathscr{S} \circ \phi_{\scriptscriptstyle{1}} \,\,=\,\, \phi_{\scriptscriptstyle{1}} \circ \mathscr{S}, 
\end{eqnarray}
hold, where $\chi$ is the obvious map $\chi: \cK\tensor{A}\cK \to \cK\tensor{B}\cK$,   and  where no distinction between the structure maps of $\cH$, $\cK$ was made. 
Clearly, any morphism $\B{\phi}: (A,\cH) \to (B,\cK)$ of Hopf algebroids induces  (in the opposite way) a morphism  between the associated presheaves of groupoids, which is given over each fibre by $$(\phi_{\Sscript{0}}{}^* , \phi_{\Sscript{1}}{}^*): \mathscr{K}(C)=(\cK(C),B(C)) \longrightarrow \mathscr{H}(C)=(\cH(C),A(C)), \;\; \text{ sending }\;(g,x) \mapsto (g \circ \phi_{\Sscript{1}}, x \circ \phi_{\Sscript{0}}).$$

In this way, the construction in the following example  corresponds to the construction of the induced groupoid as expounded in Example \ref{exam:induced}.

\begin{example}[Base change]\label{exam:Base change}
Given a Hopf algebroid $(A,\cH)$ and an algebra map $\phi:A \to B$, then the pair of algebras
$$
(B,\cH_{\scriptscriptstyle{\phi}}):=(B,B\tensor{A}\cH\tensor{A}B)
$$ is a Hopf algebroid known as \emph{the base change Hopf algebroid} of $(A,\cH)$, and $(\phi, \phi_1): (A,\cH) \to (B,\cH_{\scriptscriptstyle{\phi}})$ is a morphism of Hopf algebroids, where $\phi_1: \cH \to \cH_{\Sscript{\phi}}$  sends $u \mapsto 1_{\Sscript{B}}\tensor{A}u \tensor{A}1_{\Sscript{B}}$. Moreover, as in the case of groupoids, see subsection \ref{ssec:basic}, any morphism $\B{\phi}: (A,\cH) \to (B,\cK)$ factors through  \emph{the base change morphism} $(A,\cH) \to (B,\cH_{\Sscript{\phi_{\Sscript{0}}}})$, by using the map $\cH_{\Sscript{\phi_{0}}} \to \cK$ sending $b\tensor{A}u\tensor{A}b' \mapsto bb'\phi_{\Sscript{1}}(u)$.

The associated presheaf of groupoids $\hH_{\Sscript{\phiup}}$ of the Hopf algebroid $(B, \cH_{\Sscript{\phi}})$ is fiberwise computed as the induced groupoid (see Example \ref{exam:induced}) of $\hH$ along the map $\phiup: \bB \to \aA$ where $\bB$ and $\aA$ are the presheveas of sets associated to $B$ and $A$, respectively. 
\end{example}

The aforementioned relation with the induced groupoids comes out as  follows. Take an algebra $C$ and consider the associated groupoid $\hH(C)$. Then the map $\phi^{\Sscript{*}}: B(C) \to A(C)$ leads, as in Example \ref{exam:induced}, to the induced groupoid  $\hH(C)^{\Sscript{\phi(C)}}$. This in fact determines a presheaf of groupoids $C \to \hH(C)^{\Sscript{\phi^{\Sscript{*}}}}$ which can be easily shown to be represented by the pair of algebras $(B,\cH_{\Sscript{\phi}})$.
  
We finish this subsection by recalling the construction of the $2$-category of flat Hopf algebroids. \\
A Hopf algebroid $(A,\cH)$ is said to be \emph{flat}, when ${}_{\Sscript{\Sf{s}}}\cH$ (or $\cH_{\Sscript{\Sf{t}}}$) is a flat $A$-module. Notice, that in this case $\Sf{s}$ as well as $\Sf{t}$ are faithfully flat extensions.
As was mentioned before,   groupoids, functors, and natural transformations form a $2$-category.   Analogously (flat) Hopf algebroids  over the ground field $\Bbbk$ form a $2$-category, as was observed in \cite[\S3.1]{Naumann:07}.   Precisely, $0$-cells are Hopf algebroids, or even flat ones, $1$-cells are morphisms of Hopf algebroids, and for 
two $1$-cells $(\phi_{\Sscript{0}},\phi_{\Sscript{1}}), (\psi_{\Sscript{0}},\psi_{\Sscript{1}}): (A, \cH) \to (B, \cK)$, a $2$-cell $\fk{c}:   (\phi_{\Sscript{0}},\phi_{\Sscript{1}}) \to (\psi_{\Sscript{0}},\psi_{\Sscript{1}})$ is defined to be an algebra map $\fk{c}: \cH \to B$ 
that makes the diagrams
\begin{equation}\label{Eq:2cells}
\begin{gathered}
\xymatrix@R=20pt{ \cH \ar@{->}^-{\fk{c}}[r] & B \\ A \ar@{->}_-{\phi_{\scriptscriptstyle{0}}}[ru] \ar@{->}^-{\sf{s}}[u] & }\qquad  \xymatrix@R=20pt{ \cH \ar@{->}^-{\fk{c}}[r] & B \\ A \ar@{->}_-{\psi_{\scriptscriptstyle{0}}}[ru] \ar@{->}^-{\sf{t}}[u] & } \qquad \xymatrix@R=20pt{ \cH \ar@{->}^-{\Delta}[rr] \ar@{->}_-{\Delta}[d] &  & \cH\tensor{A}\cH \ar@{->}|-{m_{\Sscript{\cK}}\,\circ\, \big(\phi_{\scriptscriptstyle{1}}\tensor{A}(\sf{t}\circ\fk{c})\big)}[d]  \\ \cH\tensor{A}\cH \ar@{->}_-{m_{\Sscript{\cK}}\, \circ\,\big((\sf{s}\circ {\fk{c})}\,\tensor{A}\psi_{\scriptscriptstyle{1}}\big)}[rr]  & & \cK }
\end{gathered}
\end{equation}
commutative, where $m_{\Sscript{\cK}}$ denotes the multiplication of $\cK$.
The identity $2$-cell for $(\phi_{\Sscript{0}},\phi_{\Sscript{1}})$ is given by $1_{\scriptscriptstyle{\phi}}:=  \phi_{\Sscript{0}} \circ \varepsilon$.
The tensor product (or vertical composition) of $2$-cells  
is given as
$$
\xymatrix{  \fk{c}' \circ \fk{c}:  (\phi_{\Sscript{0}},\phi_{\Sscript{1}}) \ar@{->}^-{\fk{c}}[r] & (\psi_{\Sscript{0}},\psi_{\Sscript{1}}) \ar@{->}^-{\fk{c}'}[r] & (\xi_{{0}},\xi_{{1}}),}
$$ where 
\begin{equation}
\label{Eq:verticalComp}
\fk{c}' \B{\circ} \fk{c}: \cH  \to B, \quad u \mapsto \fk{c}(u_{{(1)}}) \fk{c}'(u_{{(2)}}).
\end{equation}

\subsection{Comodules, bicomodules (algebras), and  presheaves of orbit sets}\label{ssec:C}
A \emph{right $\cH$-comodule} is a pair $(M, {\varrho})$ consisting of right $A$-module $M$ and right $A$-linear map (referred to as the \emph{coaction}) ${\varrho}: M \to M  \tensor{A} {}_{\Sscript{\Sf{s}}}\cH$, $m \mapsto m_{\Sscript{(0)}} \tensor{A} m_{\Sscript{(1)}}$ (summation understood) satisfying the usual counitary and coassociativity properties.   Morphisms between right $\cH$-comodules (or right $\cH$-\emph{colinear map}) are $A$-linear maps compatible with both coactions. The \emph{category of all right $\cH$-comodules} is denoted by $\rcomod{\cH}$. This is a symmetric monoidal $\Bbbk$-linear category with identity object $A$ endowed with the coaction $\Sf{t}:A \to \cH_{\Sscript{\Sf{t}}} \cong A\tensor{A}{}_{\Sscript{\Sf{s}}}\cH_{\Sscript{\Sf{t}}}$. 

The tensor product in $\rcomod{\cH}$ is defined via the so called \emph{the diagonal coaction}. Precisely, given $(M, \varrho)$ and $(N, \varrho)$ two (right)  $\cH$-comodules. Then the tensor product $M\tensor{A}N$ is endowed with the following (right) $\cH$-coaction:
\begin{equation}\label{Eq:tensorproduct}
\varrho_{\Sscript{M\tensor{A}N}}: M \tensor{A}N \longrightarrow (M\tensor{A}N)\tensor{A}\cH, \quad \Big( m\tensor{A}n \longmapsto (m_{\Sscript{(0)}}\tensor{A}n_{\Sscript{(0)}}) \tensor{A} m_{\Sscript{(1)}}n_{\Sscript{(1)}} \Big). 
\end{equation} 

The vector space of all $\cH$-colinear maps between two comodules $(M,\varrho)$ and $(N,\varrho)$ will be denoted by $\cohom{\cH}{M}{N}$, and  the endomorphism ring   by $\CoEnd{\cH}{M}$.

Inductive limit and cokernels do exist in $\rcomod{\cH}$, and can be computed in $A$-modules. Furthermore, it is well known that the underlying module ${}_{\Sscript{\Sf{s}}}\cH$ is flat if and only if $\rcomod{\cH}$ is a Grothendieck category and the forgetful functor $\mathscr{U}_{\Sscript{\cH}}: \rcomod{\cH} \to \rmod{A}$ is  exact.  As it can be easily checked, the forgetful functor $\mathscr{U}_{\Sscript{\cH}}$ has a right adjoint functor $-\tensor{A}{}_{\Sscript{\Sf{s}}}\cH: \rmod{A} \to \rcomod{\cH}$. 

The full subcategory of right $\cH$-comodules whose underlying $A$-modules are finitely generated is denoted by $\frcomod{\cH}$. 
The category of left $\cH$-comodules is analogously defined, and it is isomorphic via the antipode to the category of right $\cH$-comodules.

\emph{A (right) $\cH$-comodule algebra} can be defined as a commutative monoid in the symmetric monoidal category $\rcomod{\cH}$. This is a commutative algebra extension $\sigma:A \to R$ where the associated $A$-module $R_{\Sscript{\sigma}}$ is also a (right) $\cH$-comodule whose coaction $\varrho_{\Sscript{R}}:R_{\Sscript{\sigma}} \to R_{\Sscript{\sigma}} \tensor{A} {}_{\Sscript{\Sf{s}}}\cH$ is an algebra map, which means that
$$
\varrho_{\Sscript{R}}(1_{\Sscript{R}})\,\,=\,\,  1_{\Sscript{R}}\tensor{A} 1_{\Sscript{\cH}},\quad \varrho_{\Sscript{R}}(rr')\,\,=\,\, r_{\Sscript{(0)}}r'_{\Sscript{(0)}}\tensor{A} r_{\Sscript{(1)}}r'_{\Sscript{(1)}}, \; \text{ for every }\, r, r' \in R.
$$ 

This of course induces a (right) $\mathscr{H}$-action on the presheaf of sets $\mathscr{R}$ associated to $R$. Precisely, given an algebra $C$, consider the map $\sigma^{\Sscript{*}}:R(C) \to A(C)$ sending $x \mapsto x \circ \sigma$, and set 
\begin{equation}\label{Eq:action}
R(C)\,  \due \times {\scriptscriptstyle{\sigma^*}} {\, \scriptscriptstyle{\Sf{s}^*}} \,\mathscr{H}_{\Sscript{1}}(C) \to R(C), (x,g) \mapsto xg,\,\text{ where } xg: R \to C, \, r \mapsto x(r_{\Sscript{(0)}})\, g(r_{\Sscript{(1)}}).
\end{equation}
Then this defines, in a natural way,  a  right action of the groupoid $\mathscr{H}(C)$ on the set $R(C)$, in the sense of Definition \ref{def:Gset}. Equivalently such an action  can be expressed as a pair morphism of presheaves $\rR \to \hH_{\Sscript{0}}$ and $\rR\,  \due \times {\Sscript{\sigma^*}} {\, \Sscript{\Sf{s}^*}} \, \hH_{\Sscript{1}} \to \rR$ satisfying pertinent compatibilities.  In this way,  an action of a presheaf of groupoids on a  presheaf of sets,  can be seen as a natural generalization to the groupoids framework of  the notion of an action of group scheme on a scheme \cite[n$^\text{o}$3, page 160]{DemGab:GATIGAGGC}, or more formally, as a generalization of the notion of  "objet \`a  groupe d'op\'erateurs \`a droite" \cite[Chapitre III, \S 1.1]{Giraud:1971}.

For a right $\cH$-comodule algebra $(R,\sigma)$, we have the \emph{subalgebra of coinvariants} defined by 
\begin{equation}\label{Eq:coinH}
R^{\Sscript{coinv_{\cH}}}:=\Big\{ r \in R|\,\, \varrho_{\Sscript{R}}(r)=r\tensor{A}1_{\Sscript{\cH}} \Big\}.
\end{equation}
Denote by $\rR^{\Sscript{\hH}}$ the presheaf of sets represented by the algebra  $R^{\Sscript{coinv_{\cH}}}$. On the other hand, we have the presheaf defining the orbit sets, which is given as follows:
Take an algebra $C$ and consider the action \eqref{Eq:action}, we then obtain the orbits set  $R(C)/\hH(C)$. Clearly this establishes a functor: $C \longrightarrow R(C)/\hH(C)$ yielding a presheaf $\oO_{\Sscript{\hH}}(\rR)$ with a canonical morphism of presheaves $ \oO_{\Sscript{\hH}}(\rR) \longrightarrow \rR^{\Sscript{\hH}}$. 

\begin{remark}\label{remak:Qlevoyhacer}
An important example of the previous construction is the case of the right $\cH$-comodule algebra $(A,\Sf{t})$. In this case  we have a commutative diagram of presheaves:
$$
\xymatrix@R=7pt{  \aA \ar@{->}^-{\Sscript{\tau}}[rr] \ar@{->}_-{\Sscript{\zeta}}[rd] & & \aA^{\Sscript{\hH}} \\ &  \oO_{\Sscript{\hH}}(\aA) \ar@{->}^-{}[ru] & }
$$
where as before $\aA$ is the presheaf represented by the algebra $A$ and  $\aA^{\Sscript{\hH}}$ is represented by $A^{\Sscript{coinv_{\cH}}}$. 

Notice that  the presheaf $\oO_{\Sscript{\hH}}(\aA)$ is  not necessarily represented by $A^{\Sscript{coinv_{\cH}}}$, thus, the right hand map in the previous diagram is not in general an isomorphism of presheaves, see \cite[page 54]{Powell:2008}.

In this direction, both $\aA \, \due \times {\Sscript{\tau}} {\, \Sscript{\tau}}\, \aA$ and $ \aA \, \due \times {\Sscript{\zeta}} {\, \Sscript{\zeta}}\, \aA$  enjoy a structure of presheaves of groupoids with fibres are groupoids of pairs described in Examples \ref{exam:X}. Nevertheless, $ \aA \, \due \times {\Sscript{\zeta}} {\, \Sscript{\zeta}}\, \aA$ is not necessarily representable. Furthermore,  there is a commutative diagram
$$
\xymatrix@R=7pt{  \hH \ar@{->}^-{}[rr] \ar@{->}^-{}[rd] & & \aA\, \due \times {\Sscript{\tau}} {\, \Sscript{\tau}}\, \aA \\ &  \aA \, \due \times {\Sscript{\zeta}} {\, \Sscript{\zeta}}\, \aA \ar@{->}^-{}[ru] & }
$$
of presheaves of groupoids.
\end{remark}

Given two Hopf algebroids $(A,\cH)$ and $(B,\cK)$,  \emph{the category of $(\cH,\cK)$-bicomodules} is defined as follows. An  object in this category is a  triple $(M, {\lambda}, {\varrho})$ consisting of left $\cH$-comodule $(M,{\lambda})$ and right $\cK$-comodule $(M,{\varrho})$ such that ${\lambda}$ is a morphism of right $\cK$-comodules, or equivalently ${\varrho}$ is a morphism of left $\cH$-comodules. Morphisms between bicomodules are simultaneously left and right comodules morphisms. On the other hand, the pair of tensor product $(A\tensor{}B, \cH^{\Sscript{o}}\tensor{}\cK)$ admits, in a canonical way, a structure of Hopf algebroid, where $(A,\cH^{\Sscript{o}})$ is the opposite Hopf algebroid (i.e., the source and the target are interchanged, or equivalently, the fibres of the associated presheaf are the opposite groupoids $\hH(C)^{\Sscript{op}}$).  This is \emph{the tensor product Hopf algebroid}, and its  category of right comodules is canonically identified with the category of $(\cH,\cK)$-bicomodules.  Thus bicomodules form also a symmetric monoidal $\Bbbk$-linear category. 

A \emph{bicomodule algebra} is a bicomodule which is simultaneously a left comodule algebra and right comodule algebra. As above, by using the actions of equation \eqref{Eq:action} a bicomodule algebra leads to a presheaf of groupoid bisets. That is, a presheaf with fibres groupoid-bisets, in the sense of Definition \ref{def:biset}.

\subsection{Weak equivalences and principal bundles between Hopf algebroids }\label{ssec:W}
Any morphism $\B{\phi}: (A,\cH) \to (B,\cK)$ of Hopf algebroids induces a symmetric monoidal $\Bbbk$-linear functor 
$$
\B{\phi}_*:= \mathscr{U}_{\Sscript{\cH}}(-)\tensor{A}B:  \rcomod{\cH} \longrightarrow \rcomod{\cK},
$$
where, for any $\cH$-comodule $(M, \varrho)$, the  $\cK$-comodule structure of $M\tensor{A}B$ is given by  $$M\tensor{A}B \longrightarrow (M\tensor{A}B)\tensor{B}\cK, \quad m\tensor{A}b \longmapsto (m_{\Sscript{(0)}}\tensor{A}1_{\Sscript{B}}) \tensor{B} \phi_{\Sscript{1}}(m_{\Sscript{(1)}})\Sf{t}(b).$$  
Following \cite[Definition 6.1]{HovStr:CALEHT}, $\B{\phi}$ is said to be a \emph{weak equivalence} whenever $\B{\phi}_*$ is an equivalence of categories. In this case, $\rcomod{\cH}$ and $\rcomod{\cK}$ are equivalent as symmetric monoidal $\Bbbk$-linear categories.

Notice that if $\B{\phi}$ is a weak equivalence, then so is the associated morphism between the tensor product Hopf algebroids $\B{\phi}^{\Sscript{o}}\tensor{}\B{\phi}: (A\tensor{}A, \cH^{\Sscript{o}}\tensor{}\cH) \to (B\tensor{}B, \cK^{\Sscript{o}}\tensor{}\cK)$, which induces then a symmetric monoidal equivalence between the categories of $\cH$-bicomodules and $\cK$-bicomodules.  

Two  Hopf algebroids $(A,\cH)$ and $(B,\cK)$ are said to be \emph{weakly equivalent} if there exists a diagram  
$$
\xymatrix@R=7pt{  &  (C, \cJ) & \\ (A, \cH) \ar@{->}[ru] & & \ar@{->}[lu] (B,\cK),}
$$
of weak equivalences.

As was shown in \cite{Kaoutit/Kowalzig:14} weak equivalences between flat Hopf algebroids are strongly related to  principal bi-bundles. Such a relation is in part a consequence of the analogue one for groupoids as was shown in Proposition \ref{prop:pb} (see Remark \ref{remak:tusojosverdes}).

Recall that, for two flat Hopf algebroids $(A,\cH)$ and $(B,\cK)$, a \emph{left principal $(\cH,\cK)$-bundle} is a three-tuple $(P,\alpha,\beta)$ which consists of   diagram of commutative algebras $\alpha: A \rightarrow P \leftarrow B: \beta$ where the $(A,B)$-bimodule ${}_{\Sscript{\alpha}}P_{\Sscript{\beta}}$ enjoys a structure of an $(\cH,\cK)$-bicomodule algebra such that 
\begin{enumerate}[({PB}1)]
\item $\beta: B \to P$ is a faithfully flat extension (the local triviality of the bundle in the fpqc topology);
\item the canonical map $\can{\Sscript{P,\,\cH}}: P\tensor{B}P \to \cH\tensor{A}P$ sending $p\tensor{B}p' \mapsto p_{\Sscript{(0)}}\tensor{A}p_{\Sscript{(1)}}p'$ is bijective.
\end{enumerate}
Observe that these two conditions, in conjunction with the faithfully flat descent theorem \cite[Theorem 5.9]{K/GT:2004}, show that $P^{\Sscript{coinv_{\cH}}}=B$, see \eqref{Eq:coinH} for the notation. 

The notion of principal bundles is a natural generalization of the notion of \emph{Torsor}, where the group object is replaced by groupoid object, see \cite[D\'efinition 1.4.1, page 117]{Giraud:1971} and \cite[Chapter III, \S4]{DemGab:GATIGAGGC}. In case of Hopf algebras over commutative rings, these objects are termed \emph{Hopf Galois extensions}, see \cite{Montgomery:1993, Schau:HGABGE}.

Right principal bundles and bi-bundles are clearly understood.  For instance to each left principal bundle $(P,\alpha,\beta)$, one can define a right principal bundle on the opposite bicomodule $P^{\Sscript{co}}$. As in the case of groupoids, see subsection \ref{ssec:biset}, a simpler example of left principal bundle is \emph{the unit bundle} $\mathscr{U}(\cH)$ which is $\cH$ with its canonical structure of $\cH$-bicomodule algebra. A \emph{trivial bundle} attached to a given morphism of Hopf algebroids $\B{\phi}: (A,\cH) \to (B,\cK)$, is the one whose underlying bicomodule algebra is of the form  $P:=\cH\tensor{A}B$, that is, the pull-back bundle $\B{\phi}^{*}(\uU(\cH))$ of the unit bundle $\uU(\cH)$.

Parallel to subsection \ref{ssec:biset}, for any bicomodule algebra, and thus for any left principal bundle, one can associate the so called \emph{the two-sided translation  Hopf algebroid}, which is denoted by $(P,\cH \lJoin P \rJoin \cK)$. The underlying pair of algebras is $(P,\cH_{\Sscript{\Sf{s}}}\tensor{}{}_{\Sscript{\alpha}}P_{\Sscript{\beta}}\tensor{}{}_{\Sscript{\Sf{s}}}\cK)$ and its structure of Hopf algebroid is given as follows:
\begin{itemize}
\item the source and target are given by 
$$
\Sf{s}(p)\,:=\, 1_{\scriptscriptstyle{\cH}} \tensor{A}p\tensor{B}1_{\scriptscriptstyle{\cK}}, \quad \Sf{t}(p)\,:=\, \mathscr{S}(p_{(-1)}) \tensor{A}p_{(0)}\tensor{B}p_{(1)}; 
$$
\item the comultiplication and counit are given by:
$$
\Delta(u\tensor{A}p\tensor{B}w) :=  \big( u_{(1)}\tensor{A}p\tensor{B}w_{(1)}\big)\tensor{P}\big( u_{(2)}\tensor{A}1_{\scriptscriptstyle{P}}\tensor{B}w_{(2)}\big) ,\;\, \varepsilon(u\tensor{A}p\tensor{B}w):= \alpha\big( \varepsilon(u)\big)p \beta\big( \varepsilon(w)\big);
$$
\item whereas the antipode is defined as:  
$$ 
\mathscr{S}\big( u\tensor{A}p\tensor{B}w\big)\,:=\, \mathscr{S}(u p_{(-1)})\tensor{A}p_{(0)}
\tensor{B}p_{(1)}\mathscr{S}(w).$$
\end{itemize}
Furthermore, there is a diagram
\begin{equation}\label{Eq:triangleI}
\xymatrix@R=8pt{ & (P,\cH \lJoin P \rJoin \cK) & \\ (A,\cH) \ar@{->}^-{{\B{\alpha}=(\alpha, \,\alpha_1)}}[ur] & & \ar@{->}_-{{\B{\beta}=(\beta, \,\beta_1)}}[ul] (B,\cK) }
\end{equation}
of Hopf algebroids,
where $\alpha_{1}$ and $\beta_1$ are, respectively,  the maps $u \mapsto u \tensor{A}1_{\Sscript{P}} \tensor{B}1_{\scriptscriptstyle{\cK}}$ and  
$w \mapsto 1_{\scriptscriptstyle{\cH}} \tensor{A} 1_{\Sscript{P}} \tensor{B} w $. It is easily checked that $(P,\cH \lJoin P \rJoin \cK)$ is a flat Hopf algebroid whenever $(A,\cH)$ and $(B,\cK)$ they are so.

\begin{remark}\label{remark:serastu}
It is noteworthy to mention that the fibres of the presheaf  associated  to a left principal bundle are not necessarily  principal bisets over the fibres groupoids, in  the sense of  Definition  \ref{def:pbset} (but possibly the entry presheaf is locally so in the fpqc topology sense). 
To be precise, let $\pP$ denote as before the presheaf of sets associated to the algebra $P$. This is a presheaf of $(\hH,\kK)$-bisets, that is,  using left and right actions of equation  \eqref{Eq:action},  for any algebra $C$, we have that the fibre $\pP(C)$ is actually an $(\hH(C), \kK(C))$-biset as in Definition  \ref{def:biset}. However,  $\pP(C)$ is not necessarily a left principal biset. 
Nevertheless, it is  easily seen  that the associated presheaf of two-sided translation groupoids is represented by the two-sided translation Hopf algebroid $(P,\cH \lJoin P \rJoin \cK)$.
Lastly, as in \cite{Kaoutit/Kowalzig:14}, two weakly equivalent flat Hopf algebroids are shown  to be connected by  a principal bibundle, for which the diagram \eqref{Eq:triangleI} becomes a diagram of weak equivalences, see \emph{op.~ cit.}\  for more characterizations of weak equivalences. 
\end{remark}

\subsection{Dualizable right comodules}\label{ssec:dual} Recall that a  (right) $\cH$-comodule $(M,\varrho_{\Sscript{M}})$ is said to be \emph{dualizable}, if there is another (right) $\cH$-comodule $(N,\varrho_{\Sscript{N}})$ and two morphisms of comodules 
\begin{equation}\label{Eq:evdb}
\Sf{ev}:(N\tensor{A}M, \varrho_{\Sscript{N\tensor{A}M}}) \to (A,\Sf{t}),\; \text{ and }\; \Sf{db}: (A,\Sf{t}) \to (M\tensor{A}N, \varrho_{\Sscript{M\tensor{A}N}})
\end{equation}
satisfying, up to natural isomorphisms, the usual triangle properties. Taking the underlying $A$-linear maps $(\Sf{ev}, \Sf{db})$ and using these triangle properties, one shows that $\mathscr{U}_{\Sscript{\cH}}(N) \cong M^*=\hom{A}{M}{A}$. Thus, the underlying $A$-module of any dualizable comodule is finitely generated and projective.  Moreover, the dual of $(M,\varrho_{\Sscript{M}})$ in the category $\rcomod{\cH}$ is, up to an isomorphism, the comodule $(M^*, \varrho_{\Sscript{M^*}})$ with the following coaction:
\begin{equation}\label{Eq:star}
\varrho_{\Sscript{M^*}}: M^* \to M^*\tensor{A}\cH, \quad \Big(\varphi \longmapsto e_{\Sscript{i}}^*\tensor{A}\Sf{t}(\varphi(e_{\Sscript{i,\,(0)}}))\mathscr{S}(e_{\Sscript{i,\,(1)}}) \Big),
\end{equation}
where $\{e_i,e_i^*\}$ is a dual basis for $M_A$, that is, $\Sf{db}(1_{\Sscript{A}}) =\sum_i e_{i}\tensor{A}e_i^*$.  The converse  holds true as well, that is, dualizable objects in $\rcomod{\cH}$ are, up to natural isomorphisms, precisely the objects of the subcategory $\frcomod{\cH}$ which are projective as $A$-modules.  This is a well known fact which will be implicitly used below:

\begin{lemma}\label{lema:dualizable}
Let $(M, \varrho_{\Sscript{M}})$ be a right $\cH$-comodule whose underlying $A$-module $M$ is finitely generated and projective, and consider $(M^*, \varrho_{\Sscript{M^*}})$ as right $\cH$-comodule with the coaction given by \eqref{Eq:star}. Then $(M,\varrho_{\Sscript{M}})$ is a dualizable object in $\rcomod{\cH}$ with dual object $(M^*,\varrho_{\Sscript{M^*}})$. In particular, the full subcategory of dualizable right $\cH$-comodules consists of those comodules with finitely generated and projective underlying $A$-modules.
\end{lemma}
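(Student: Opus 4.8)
The plan is to exhibit explicit evaluation and coevaluation maps and to verify that they are $\cH$-colinear, the underlying $A$-linear triangle identities being automatic. Since $M_{\Sscript{A}}$ is finitely generated and projective, I fix a dual basis $\{e_i, e_i^*\}_i$, so that $\sum_i e_i\, e_i^*(m)=m$ for all $m\in M$; equivalently $\sum_i e_i\tensor{A}e_i^*$ is a well-defined element of $M\tensor{A}M^*$. I then take $\Sf{ev}\colon M^*\tensor{A}M\to A$, $\varphi\tensor{A}m\mapsto\varphi(m)$, and $\Sf{db}\colon A\to M\tensor{A}M^*$, $1_{\Sscript{A}}\mapsto\sum_i e_i\tensor{A}e_i^*$. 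As $A$-linear maps these are precisely the duality data of a finitely generated projective module and satisfy the two triangle identities required in \eqref{Eq:evdb}; this step needs no comodule input.

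The substance of the proof is that $\Sf{ev}$ and $\Sf{db}$ are morphisms in $\rcomod{\cH}$ once $M^*$ carries the coaction \eqref{Eq:star}. First I would check that \eqref{Eq:star} genuinely defines a coassociative, counital right $\cH$-coaction on $M^*$, independent of the chosen dual basis: counitality uses $\varepsilon\circ\Sf{t}=\id$ together with the counit property of $\varrho$, while coassociativity follows from the coassociativity of $\varrho$, the fact that $\mathscr{S}$ reverses $\Delta$, and $\mathscr{S}^2=\id$ from \eqref{Eq:antipode}.

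Next come the two colinearity computations. Using the diagonal coaction \eqref{Eq:tensorproduct} on $M^*\tensor{A}M$ and the identification $A\tensor{A}{}_{\Sscript{\Sf{s}}}\cH\cong\cH$, colinearity of $\Sf{ev}$ is the identity $\Sf{s}\big(\varphi_{(0)}(m_{(0)})\big)\,\varphi_{(1)}m_{(1)}=\Sf{t}(\varphi(m))$ in $\cH$; expanding $\varrho_{\Sscript{M^*}}$ via \eqref{Eq:star} and collapsing the dual basis against $m_{(0)}$, this reduces to the antipode relation $\mathscr{S}(u_{(1)})u_{(2)}=\Sf{t}(\varepsilon(u))$. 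Dually, colinearity of $\Sf{db}$ asks that $\varrho_{\Sscript{M\tensor{A}M^*}}\big(\sum_i e_i\tensor{A}e_i^*\big)=\big(\sum_i e_i\tensor{A}e_i^*\big)\tensor{A}1_{\Sscript{\cH}}$, which after the same expansion reduces to $u_{(1)}\mathscr{S}(u_{(2)})=\Sf{s}(\varepsilon(u))$; both relations are supplied by \eqref{Eq:antipode}.

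With $\Sf{ev}$ and $\Sf{db}$ established as $\cH$-colinear, the triangle identities --- being equalities of $A$-linear maps now known to be colinear --- hold in $\rcomod{\cH}$, so $(M,\varrho)$ is dualizable with dual $(M^*,\varrho_{\Sscript{M^*}})$. The final assertion then follows by combining this with the converse already recorded before the statement: since $\mathscr{U}_{\Sscript{\cH}}$ is strong monoidal it carries a dualizable comodule to a dualizable $A$-module, i.e.~a finitely generated projective one, so the dualizable objects of $\rcomod{\cH}$ are exactly the comodules whose underlying $A$-module is finitely generated and projective. I expect the main obstacle to be the two colinearity computations, and specifically keeping track of the source/target $A$-actions in the canonical identification $A\tensor{A}{}_{\Sscript{\Sf{s}}}\cH\cong\cH$ and in the balanced tensor products, so that the antipode axioms apply cleanly; checking independence from the dual basis and coassociativity of \eqref{Eq:star} is routine but must be done carefully because that formula intertwines $\varrho$, $\Sf{t}$, and $\mathscr{S}$.
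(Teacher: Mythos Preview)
Your proposal is correct and follows essentially the same line as the paper: the same $\Sf{ev}$ and $\Sf{db}$ are defined, and both colinearity checks are reduced to the two antipode identities in \eqref{Eq:antipode}. The only organizational difference is in the $\Sf{db}$ step: the paper routes that verification through an auxiliary map $\zetaup\colon M^*\to\hom{A}{M}{\cH_{\Sf{t}}}$ and an intermediate identity (their \eqref{Eq:zeta2}) obtained from the coassociativity of $\varrho_{\Sscript{M^*}}$, whereas your direct computation (collapsing the dual basis against $e_{j,(0)}$ via the right $A$-linearity of $\varrho$, then applying $u_{(1)}\mathscr{S}(u_{(2)})=\Sf{s}(\varepsilon(u))$) reaches the same conclusion without that detour. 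Both arguments use exactly the same ingredients.
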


The following lemma will be used in the sequel and the $\Bbbk$-algebras involved in it are not necessary commutative.
\begin{lemma}\label{lema:MN}
Let $(A,\mathfrak{H})$ and $(A',\mathfrak{H}')$ be two corings. Assume that there are two bimodules ${}_{\Sscript{B}}M_{\Sscript{A}}$ and  ${}_{\Sscript{B'}}M'_{\Sscript{A'}}$ such that $(M,\varrho_{\Sscript{M}})$ and $(M,'\varrho_{\Sscript{M'}})$ are, respectively, $(B,\mathfrak{H})$-bicomodule and $(B',\mathfrak{H}')$-bicomodule (here $B$ and $B'$ are considered as $B$-coring and $B'$-coring in a trivial way), and that $M_{\Sscript{A}}$, $M'_{\Sscript{A'}}$ are finitely generated and projective modules with dual bases, respectively, $\{m_i,m_i^*\}$ and $\{n_j,n_j^*\}$.  
\begin{enumerate}[(1)]
\item If the associated canonical map:
\begin{equation}\label{Eq:can}
\Sf{can}_{\Sscript{M}}: M^*\tensor{B}M \to \mathfrak{H}, \quad \Big( m^*\tensor{A}m \longmapsto m^*(m_{\Sscript{(0)}}) m_{\Sscript{(1)}} \Big)
\end{equation}
is injective, then ${\rm End}^{\Sscript{M^*\tensor{B}M}}(M)\,=\, {\rm End}^{\Sscript{\mathfrak{H}}}(M)$, where $M^*\tensor{B}M$ is the standard $A$-coring \cite{Bruguieres:1994}, or the comatrix $A$-coring \cite{Kaoutit/Gomez:2003a}.

\item  If both $\Sf{can}_{\Sscript{M}}$ and $\Sf{can}_{\Sscript{M'}}$ are injective,  ${}_{\Sscript{B}}M$ and ${}_{\Sscript{B'}}M'$ are faithfully flat modules, then 
$$
 {\rm End}^{\Sscript{\mathfrak{H}\tensor{}\mathfrak{H}'}}(M\tensor{}M') \,\,\cong \,\, B\tensor{}B' \,\, \cong\,\,   {\rm End}^{\Sscript{\mathfrak{H}}}(M) \tensor{}{\rm End}^{\Sscript{\mathfrak{H'}}}(M').
$$
\end{enumerate}
\end{lemma}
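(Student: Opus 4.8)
The plan is to factor both endomorphism rings through the \emph{comatrix coring} $\mathfrak{C}:=M^{*}\tensor{B}M$ of \cite{Kaoutit/Gomez:2003a}, regarding $M$ as a right $\mathfrak{C}$-comodule via the coaction $\varrho^{\mathfrak{C}}\colon m\mapsto \sum_{i}m_{i}\tensor{A}(m_{i}^{*}\tensor{B}m)$, and likewise $M'$ over $\mathfrak{C}':=M'^{*}\tensor{B'}M'$. For part (1) I would first observe that the $\mathfrak{H}$-coaction on $M$ is the push-forward of the comatrix coaction along the canonical map, i.e. $(M\tensor{A}\Sf{can}_{\Sscript{M}})\circ\varrho^{\mathfrak{C}}=\varrho_{\Sscript{M}}$; this is a one-line computation with the dual basis, using $\sum_{i}m_{i}\,m_{i}^{*}(m_{\Sscript{(0)}})=m_{\Sscript{(0)}}$. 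From this factorization any $\mathfrak{C}$-colinear endomorphism is at once $\mathfrak{H}$-colinear, so $\cohom{\mathfrak{C}}{M}{M}\subseteq\cohom{\mathfrak{H}}{M}{M}$. For the reverse inclusion I would use that $M_{A}$ is finitely generated projective, hence flat, so that $M\tensor{A}\Sf{can}_{\Sscript{M}}$ is injective whenever $\Sf{can}_{\Sscript{M}}$ is; applying $M\tensor{A}\Sf{can}_{\Sscript{M}}$ to the $\mathfrak{H}$-colinearity equation and cancelling this injection recovers exactly the $\mathfrak{C}$-colinearity equation. This yields the equality asserted in part (1).

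For part (2) I would combine (1) with faithfully flat descent for comatrix corings. Since $M_{A}$ is finitely generated projective and ${}_{B}M$ is faithfully flat, the functor $-\tensor{B}M\colon\rmod{B}\to\rcomod{\mathfrak{C}}$ is an equivalence of categories \cite{Kaoutit/Gomez:2003a}, whence $\mathrm{End}^{\mathfrak{C}}(M)\cong\mathrm{End}_{B}(B)\cong B$ (with $b\in B$ corresponding to left multiplication $m\mapsto bm$, which one checks directly to be $\mathfrak{C}$-colinear). Together with part (1) this gives $\mathrm{End}^{\mathfrak{H}}(M)\cong B$, and symmetrically $\mathrm{End}^{\mathfrak{H}'}(M')\cong B'$; tensoring over $\Bbbk$ produces the isomorphism $\mathrm{End}^{\mathfrak{H}}(M)\tensor{}\mathrm{End}^{\mathfrak{H}'}(M')\cong B\tensor{}B'$.

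For the remaining isomorphism I would view $M\tensor{}M'$ as a $(B\tensor{}B',\,\mathfrak{H}\tensor{}\mathfrak{H}')$-bicomodule whose underlying $A\tensor{}A'$-module is finitely generated projective, and identify $(M\tensor{}M')^{*}\cong M^{*}\tensor{}M'^{*}$, hence the comatrix coring $(M\tensor{}M')^{*}\tensor{B\tensor{}B'}(M\tensor{}M')\cong\mathfrak{C}\tensor{}\mathfrak{C}'$, under which $\Sf{can}_{\Sscript{M\tensor{}M'}}$ corresponds to $\Sf{can}_{\Sscript{M}}\tensor{}\Sf{can}_{\Sscript{M'}}$. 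Because $\Bbbk$ is a field the tensor product over $\Bbbk$ is exact, so the injectivity of $\Sf{can}_{\Sscript{M}}$ and $\Sf{can}_{\Sscript{M'}}$ forces $\Sf{can}_{\Sscript{M\tensor{}M'}}$ to be injective. Part (1), applied now to $M\tensor{}M'$, then identifies $\mathrm{End}^{\mathfrak{H}\tensor{}\mathfrak{H}'}(M\tensor{}M')$ with the endomorphism ring over the comatrix coring $\mathfrak{C}\tensor{}\mathfrak{C}'$, and a second application of descent (using that ${}_{B\tensor{}B'}(M\tensor{}M')$ is again faithfully flat) yields $B\tensor{}B'$, completing part (2).

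The genuinely routine steps are the dual-basis identity and the two colinearity cancellations. The main obstacle is the bookkeeping in part (2): establishing that the comatrix coring of $M\tensor{}M'$ is the tensor product $\mathfrak{C}\tensor{}\mathfrak{C}'$ \emph{compatibly} with the canonical maps, which requires care with the non-commutative tensor ring $B\tensor{}B'$ and with the identification $(M\tensor{}M')^{*}\cong M^{*}\tensor{}M'^{*}$, together with checking that faithful flatness of ${}_{B}M$ and ${}_{B'}M'$ is inherited by ${}_{B\tensor{}B'}(M\tensor{}M')$; this last point, like the injectivity of $\Sf{can}_{\Sscript{M}}\tensor{}\Sf{can}_{\Sscript{M'}}$, is exactly where working over a field $\Bbbk$ is indispensable.
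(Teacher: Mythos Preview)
Your proposal is correct and follows the same route as the paper's own proof, which is deliberately terse: it records that part~(1) is a routine computation and that part~(2) follows from part~(1) together with \cite[Theorem~3.10]{Kaoutit/Gomez:2003a} (the faithfully flat descent theorem for comatrix corings). Your argument makes these ingredients explicit---the factorization $(M\tensor{A}\Sf{can}_{\Sscript{M}})\circ\varrho^{\mathfrak{C}}=\varrho_{\Sscript{M}}$ for part~(1), and for part~(2) the identification of the comatrix coring of $M\tensor{}M'$ with $\mathfrak{C}\tensor{}\mathfrak{C}'$ followed by a second application of descent---so there is nothing to correct.
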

\begin{proof} $(1)$ is a routine computation. Part $(2)$ uses  part (1) and the result \cite[Theorem 3.10]{Kaoutit/Gomez:2003a}.  
\end{proof}

\subsection{Dualizable comodule whose endomorphism ring is a principal bundle}\label{sse:dpb}
This subsection is of independent interest. We give conditions under which the endomorphism ring (of linear maps)  of a dualizable right comodule is a left principal bundle. 

Let $(A,\cH)$ be a flat Hopf algebroid and $(M, \varrho)$ a dualizable right $\cH$-comodule. Denote by $B:=\CoEnd{\cH}{M}$ its endomorphism ring of $\cH$-colinear maps, and  consider the endomorphism ring of $A$-linear maps $\End{A}{M}$ as right $\cH$-comodule via the isomorphism $M^*\tensor{A}M \cong \End{A}{M}$ together with the following obvious algebra maps 
$\alpha: A \longrightarrow \End{A}{M}$ and $\beta: B \hookrightarrow  \End{A}{M}$. The proof of the following is left to the reader. 
\begin{proposition}
Assume that ${}_{\Sscript{A}}M, {}_{\Sscript{B}}M$ are faithfully flat modules and that the canonical map $\can{M}$ of equation \eqref{Eq:can} is bijective (e.g., when  $(M,\varrho)$ is a small generator in the category of right $\cH$-comodules). Then the triple $(\End{A}{M},\alpha, \beta)$  is a right principal $(B, \cH)$-bundle (where $(B,B)$ is considered as a trivial Hopf algebroid).
\end{proposition}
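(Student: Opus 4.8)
The plan is to verify, for the triple $(\End{A}{M},\alpha,\beta)$, the two defining conditions of a (right) principal bundle, after first making its bicomodule-algebra structure explicit. Since $(M,\varrho)$ is dualizable, Lemma \ref{lema:dualizable} gives that $M_{\Sscript{A}}$ is finitely generated projective, so evaluation yields an isomorphism of $A$-algebras $\End{A}{M}\cong M\tensor{A}M^*$; transporting the diagonal $\cH$-coaction \eqref{Eq:tensorproduct} (with $M^*$ coacting as in \eqref{Eq:star}) through this isomorphism turns $P:=\End{A}{M}$ into a right $\cH$-comodule algebra. An $A$-linear endomorphism is $\cH$-colinear exactly when it is coinvariant, whence $P^{\Sscript{coinv_{\cH}}}=\CoEnd{\cH}{M}=B$; together with the (trivial) left $B$-comodule structure induced by $\beta:B\hookrightarrow P$ and the scalar map $\alpha:A\to P$, $a\mapsto a\,\id_{\Sscript{M}}$, this exhibits $(P,\alpha,\beta)$ as a $(B,\cH)$-bicomodule algebra. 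It then remains to check local triviality (PB1) and bijectivity of the canonical map (PB2).

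For (PB1) I would use that $M_{\Sscript{A}}$ is faithfully flat and finitely generated projective, hence an $A$-progenerator: then $\End{A}{M}$ is Morita equivalent to $A$ and $\alpha:A\to P$ is a faithfully flat extension. Likewise ${}_{\Sscript{B}}M$ faithfully flat forces $\beta:B\to P$ to be faithfully flat, so the local-triviality extension is faithfully flat as required.

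The substance of the argument is (PB2), and here the plan is to factor the bundle's canonical map through the comatrix map $\can{M}$ of \eqref{Eq:can}. Writing elements of $P=M\tensor{A}M^*$ as $m\tensor{A}\varphi$, the right $B$-action on the first factor of $P\tensor{B}P$ runs through $M^*$ and the left $B$-action on the second runs through $M$, so reassociating the balanced tensor product gives a natural isomorphism
\begin{equation*}
P\tensor{B}P\;\cong\; M\tensor{A}\big(M^*\tensor{B}M\big)\tensor{A}M^*,\qquad (m\tensor{A}\varphi)\tensor{B}(l\tensor{A}\psi)\longmapsto m\tensor{A}(\varphi\tensor{B}l)\tensor{A}\psi,
\end{equation*}
bijective because $M_{\Sscript{A}}$ and $M^*_{\Sscript{A}}$ are finitely generated projective. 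Applying $\id_{\Sscript{M}}\tensor{A}\can{M}\tensor{A}\id_{\Sscript{M^*}}$ and using that $\can{M}$ is bijective while $M,M^*$ are flat, this becomes $M\tensor{A}\cH\tensor{A}M^*$. Finally I would apply the flip $\Phi:\cH\tensor{A}M^*\to M^*\tensor{A}\cH$, $h\tensor{A}\psi\mapsto\psi_{\Sscript{(0)}}\tensor{A}h\,\psi_{\Sscript{(1)}}$, which is bijective because the antipode is an involution ($\mathscr{S}^2=\id$), landing in $M\tensor{A}M^*\tensor{A}\cH=P\tensor{A}\cH$. A direct Sweedler computation shows that the resulting composite coincides with $\can{\Sscript{P,\,\cH}}:P\tensor{B}P\to P\tensor{A}\cH$, $p\tensor{B}p'\mapsto p\,p'_{\Sscript{(0)}}\tensor{A}p'_{\Sscript{(1)}}$; being a composite of bijections, it is bijective, which is (PB2).

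The main obstacle is this last identification: checking that the composite of the three natural maps is precisely the bundle's canonical map requires tracking both coactions (on $M$ and on $M^*$) and the multiplication of $\End{A}{M}$ through $P\cong M\tensor{A}M^*$, and verifying that $\Phi$ is well defined and invertible via $\mathscr{S}^2=\id$. Conceptually everything rests on the hypothesis that $\can{M}$ is bijective — i.e.\ that $M$ is a $\cH$-Galois comodule — so that the comatrix coring $M^*\tensor{B}M$ is identified with $\cH$; the two faithful-flatness assumptions then provide both the local triviality and the faithfully flat descent (as exploited in Lemma \ref{lema:MN} and \cite[Theorem 3.10]{Kaoutit/Gomez:2003a}) that upgrade this identification to the principal-bundle statement.
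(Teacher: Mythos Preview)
The paper does not prove this proposition; it explicitly leaves the proof to the reader, so there is no argument in the paper to compare yours against. Your outline is the natural one and is correct: identify $P^{\Sscript{coinv_{\cH}}}=B$, obtain (PB1) from the progenerator properties of $M$, and for (PB2) factor $\can{\Sscript{P,\,\cH}}$ as the reassociation $P\tensor{B}P\cong M\tensor{A}(M^*\tensor{B}M)\tensor{A}M^*$, then $\id_{\Sscript{M}}\tensor{A}\can{M}\tensor{A}\id_{\Sscript{M^*}}$, then the antipode-induced flip on the last two factors. Two small points worth tightening. First, the step ``${}_{\Sscript{B}}M$ faithfully flat forces $\beta$ faithfully flat'' is not immediate as stated: the clean route is to observe that ${}_{\Sscript{P}}M$ is a progenerator (since $M_{\Sscript{A}}$ is), so that flatness and faithfulness of ${}_{\Sscript{B}}P$ can be tested after applying the faithfully exact functor $M\tensor{P}-$, which reduces the question to the hypothesis on ${}_{\Sscript{B}}M$. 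Second, in your flip $\Phi:\cH\tensor{A}M^*\to M^*\tensor{A}\cH$ one must be careful about which $A$-action ($\Sf{s}$ versus $\Sf{t}$) is used on each side of $\cH$ for the map to be well defined over $A$; the bijection itself is the standard one coming from $\mathscr{S}^2=\id$, but the formula as written needs this bookkeeping.
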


\section{Geometrically transitive Hopf algebroid: Definition, basic properties and the result}\label{sec:2}
In this section we recall the definition of geometrically transitive Hopf algebroids and prove some of their basic properties.  Most of the results presented here are in fact consequences of those stated in \cite{Bruguieres:1994}. For sake of completeness, we give below  slightly different  elementary proofs of some of these results.

\subsection{Definition and basic properties}\label{ssec:DBP} We start by proving the following result which will help us to well understand the forthcoming definition. 
\begin{proposition}[{\cite[Proposition 6.2, page 5845]{Bruguieres:1994}}]\label{prop:GT} Let $(A,\cH)$ be a flat Hopf algebroid. Assume  that $(A,\cH)$ satisfies the following condition  
\begin{center}{
\begin{enumerate}[({\textit{GT}}1)]
\item $\cH$ is  projective as an $(A\tensor{}A)$-module.
\end{enumerate}}
\end{center}
Then, we have 
\begin{enumerate}[({\textit{GT}1}1)]
\item Every $\cH$-comodule is projective as an $A$-module;
\item $\frcomod{\cH}$ is an abelian category and the functor $\mathscr{U}_{\Sscript{\cH}}: \frcomod{\cH} \to \rmod{A}$ is faithful and exact;
\item Every object in $\rcomod{\cH}$ is a filtrated limit of subobjects in $\frcomod{\cH}$.
\end{enumerate}
\end{proposition}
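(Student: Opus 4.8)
The plan is to extract all three assertions from condition (GT1) by two elementary observations: the counit splits the coaction, and projectivity over $A\tensor{}A$ is inherited by cofree comodules. As a preliminary I would record that $\cH$, being a direct summand of a free $(A\tensor{}A)$-module $(A\tensor{}A)^{\Sscript{(I)}}$, is in particular projective as a left $A$-module via $\Sf{s}$: indeed $A\tensor{}A$ is free as a left $A$-module (acting on the first factor, a $\Bbbk$-basis of the second factor being a basis), so ${}_{\Sf{s}}\cH$ is a summand of a free left $A$-module. For the $A$-projectivity of comodules, take $(M,\varrho)$ and observe that $\varrho\colon M\to M\tensor{A}{}_{\Sf{s}}\cH$ is a split monomorphism of right $A$-modules: the counit axioms make $M\tensor{}\varepsilon$ right $A$-linear for the target action and yield $(M\tensor{}\varepsilon)\circ\varrho=\id_{\Sscript{M}}$. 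Thus $M$ is a retract, as an $A$-module, of $M\tensor{A}{}_{\Sf{s}}\cH_{\Sf{t}}$, and it suffices to prove the latter is $A$-projective. Choosing a bimodule $\cH'$ with $\cH\oplus\cH'\cong(A\tensor{}A)^{\Sscript{(I)}}$ and applying $M\tensor{A}{}_{\Sf{s}}(-)$ exhibits $M\tensor{A}{}_{\Sf{s}}\cH_{\Sf{t}}$ as a direct summand of $M\tensor{A}{}_{\Sf{s}}(A\tensor{}A)^{\Sscript{(I)}}_{\Sf{t}}$; since $M\tensor{A}{}_{\Sf{s}}(A\tensor{}A)_{\Sf{t}}\cong M\tensor{}A$ is free as a right $A$-module (the target acting on the right-hand factor), the summand is projective, and hence so is $M$.

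For the statement that $\frcomod{\cH}$ is abelian with faithful exact forgetful functor, faithfulness of $\mathscr{U}_{\Sscript{\cH}}$ is automatic. Since $\cH$ is flat, $\rcomod{\cH}$ is Grothendieck and $\mathscr{U}_{\Sscript{\cH}}\colon\rcomod{\cH}\to\rmod{A}$ is exact, so kernels and cokernels of comodule maps are computed in $A$-modules; it remains to check that $\frcomod{\cH}$ is closed under them. Cokernels are quotients of a finitely generated module, hence finitely generated. For a comodule map $f\colon M\to N$ with $M,N\in\frcomod{\cH}$, the image $\mathrm{im}\,f$ is a subcomodule of $N$ (exactness of $\mathscr{U}_{\Sscript{\cH}}$) and is finitely generated; by the first assertion it is $A$-projective, so the exact sequence $0\to\ker f\to M\to\mathrm{im}\,f\to 0$ of $A$-modules splits. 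Then $\ker f$ is a direct summand of the finitely generated module $M$, hence finitely generated (and projective). This is precisely where the $A$-projectivity of comodules is indispensable: over a non-noetherian $A$ the kernel would not otherwise be finitely generated. Consequently $\frcomod{\cH}$ is closed under kernels and cokernels inside the ambient abelian category, so it is abelian and $\mathscr{U}_{\Sscript{\cH}}$ restricted to it is exact.

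The local-finiteness statement is the genuine point. I would invoke the finiteness theorem for comodules over a coring: because ${}_{\Sf{s}}\cH$ is projective, hence locally projective, as a left $A$-module, the left $\alpha$-condition holds, and \cite{BrzWis:CAC} guarantees that every right $\cH$-comodule is the directed union of its subcomodules that are finitely generated over $A$; such a directed union is exactly a filtered colimit in $\rcomod{\cH}$. The mechanism is as follows: for $m\in M$ write the finite sum $\varrho(m)=\sum_{\Sscript{k=1}}^{\Sscript{n}} m_{\Sscript{k}}\tensor{A}h_{\Sscript{k}}$; the subcomodule generated by $m$ has underlying $A$-module contained in $\sum_{\Sscript{k}} A\,m_{\Sscript{k}}$, and local projectivity ensures this coefficient submodule is genuinely closed under the coaction, equivalently under the action of the left convolution dual ${}^*\cH=\hom{A\text{-}}{{}_{\Sf{s}}\cH}{A}$, for which right comodules are identified with a full subcategory of modules. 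Running this over all finite subsets and taking the union yields the asserted presentation. I expect this last step to be the main obstacle: unlike the classical field case, where finite-dimensional subcomodule arguments are automatic, the finiteness here relies essentially on the local projectivity furnished by (GT1), and without it a comodule need not be the union of its $A$-finitely generated subcomodules.
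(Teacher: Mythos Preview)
Your proof is correct and follows essentially the same route as the paper: the counit splitting plus $(A\tensor{}A)$-projectivity for (GT11), the image-projectivity splitting trick for kernels in (GT12), and the rational-module / $\alpha$-condition argument from \cite{BrzWis:CAC} for (GT13). The only cosmetic difference is that you verify ${}_{\Sf{s}}\cH$ is $A$-projective directly from (GT1), whereas the paper deduces it from (GT11); both are fine.
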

\begin{proof}
$(GT11)$. Let $M$ be a right $\cH$-comodule. Then, as a right $A$-module $M$ is a direct summand of $M\tensor{A}\cH$. Since ${}_{\Sscript{A}}\cH_{\Sscript{A}}$ is a direct summand of a free $(A\tensor{}A)$-module, $M$ is a direct summand of the right $A$-module  $M\tensor{A}(A\tensor{}A)  \cong M\tensor{}A$. Thus $M_{\Sscript{A}}$ is projective.  The same proof works for left $\cH$-comodules. 

$(GT12)$. 
The category $\frcomod{\cH}$ is additive  with finite product and cokernels. Let us check that $\frcomod{\cH}$ do have kernels. So, assume a morphism $f : N \to M$ in $\frcomod{\cH}$ is given. Then the  kernel ${\rm Ker}(f)$ is a right $\cH$-comodule, since we already know that $\rcomod{\cH}$ is a Grothendieck category. Thus we need to check that the underlying module of this kernel  is a finitely generated $A$-module.  However, this follows from the fact that  $f^{\Sscript{k}}: {\rm Ker}(f) \to N$ splits in $A$-modules, as  we know, by the isomorphism of right $\cH$-comodules $N/{\rm Ker}(f) \cong {\rm Im}(f)$ and condition $(GT11)$, that this quotient is projective as an $A$-modules. The last claim in $(GT12)$   is now clear.

$(GT13)$. Following \cite[\S20.1, \S20.2]{BrzWis:CAC}, since ${}_{\Sscript{\Sf{s}}}\cH$  is by condition $(GT11)$ a projective module, we have that the category of rational left ${}^*\cH$-modules is isomorphic to the category of right $\cH$-comodules, where ${}^*\cH=\hom{A}{{}_{\Sscript{\Sf{s}}}\cH}{A}$ is the left convolution $A$-algebra of $\cH$. Since any submodule of a rational module is also rational,  every rational module is then a filtrated limit of finitely generated submodules.  Therefore, any right $\cH$-comodule is a filtrated limit of  subcomodules in $\frcomod{\cH}$, as any finitely generated rational module is finitely generated as an $A$-module.
\end{proof}

Recall that a (locally small)  $\Bbbk$-linear category $\mathcal{C}$ is said to be \emph{locally of finite type}, if  any object in $\mathcal{C}$ is of finite length and each of the $\Bbbk$-vector spaces of morphisms  $C(c, c')$ is finite  dimensional.  

\begin{definition}\label{def:geo-tran}[Brugui\`eres]\label{def:GT} Let $(A,\cH)$ be a flat Hopf algebroid. We say that  $(A,\cH)$ is a \emph{geometrically transitive Hopf algebroid} (GT for short)  if it satisfies the following conditions:
\begin{center}
\begin{enumerate}[({\textit{GT}}1)]
\item $\cH$ is  projective as an $(A\tensor{}A)$-module.
\item The category $\frcomod{\cH}$ is locally of finite type. 
\item ${\rm End}^{\Sscript{\cH}}(A) \,\cong\, \Bbbk$.
\end{enumerate}
\end{center}
Here ${\rm End}^{\Sscript{\cH}}(A)$ denotes the endomorphisms ring of the right $\cH$-comodule $(A,\Sf{t})$ which is identified with the coinvariant subring $A^{\Sscript{coinv_{\cH}}}=\{ a \in A|\,\, \Sf{t}(a)\,=\, \Sf{s}(a)\}$. We are implicitly assuming that $A\neq 0$ as a  comodule. 
\end{definition}

The subsequent lemma gives others consequences of the  properties stated in Definition \ref{def:geo-tran}, which will be used later on. 
\begin{lemma}\label{lema:1-2}
Let $(A,\cH)$ be a flat Hopf algebroid. 
\begin{enumerate}[(a)]
\item If $(A,\cH)$ satisfies $(GT11)$ and $(GT3)$, then $A$ is a simple (right) $\cH$-comodule.
\item If $(A,\cH)$ satisfies $(GT1)$ and $(GT3)$, then every (right) $\cH$-comodule is faithfully flat as an $A$-module.
\end{enumerate}
\end{lemma}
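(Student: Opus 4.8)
The plan is to classify the subcomodules of the unit object $(A,\Sf{t})$ and show only $0$ and $A$ occur. First I would take a nonzero subcomodule $N\subseteq A$; since the underlying right $A$-module of $(A,\Sf{t})$ is $A$ and $A$ is commutative, $N$ is an ideal. The quotient $A/N$ is again a right $\cH$-comodule, so by \textit{(GT11)} (which holds by Proposition \ref{prop:GT}) it is projective as an $A$-module; hence the exact sequence $0\to N\to A\to A/N\to 0$ splits in $\rmod{A}$ and $N$ is an $A$-module direct summand of $A$. A direct-summand ideal of a commutative ring is generated by an idempotent, so $N=eA$ with $e^{2}=e\in A$. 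Next I would unwind the comodule condition: under the identification $A\tensor{A}{}_{\Sf{s}}\cH\cong\cH$ the coaction of $(A,\Sf{t})$ sends $a\mapsto \Sf{t}(a)$, and $N\tensor{A}{}_{\Sf{s}}\cH$ is carried onto $\Sf{s}(e)\cH$ (flatness of ${}_{\Sf{s}}\cH$ being used for the inclusion $N\tensor{A}{}_{\Sf{s}}\cH\hookrightarrow\cH$); thus $N$ is a subcomodule if and only if $\Sf{s}(e)\Sf{t}(e)=\Sf{t}(e)$. Applying the antipode $\mathscr{S}$ — an algebra map since $\cH$ is commutative, with $\mathscr{S}\circ\Sf{s}=\Sf{t}$ and $\mathscr{S}\circ\Sf{t}=\Sf{s}$ — to this identity gives $\Sf{s}(e)\Sf{t}(e)=\Sf{s}(e)$, whence $\Sf{s}(e)=\Sf{t}(e)$. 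Therefore $e$ is a coinvariant, so $e\in A^{\Sscript{coinv_{\cH}}}\cong\Bbbk$ by \textit{(GT3)}; the only idempotents of $\Bbbk\cdot 1_{\Sscript{A}}$ are $0$ and $1_{\Sscript{A}}$ (as $A\neq 0$), and since $N\neq 0$ forces $e\neq 0$, we get $e=1_{\Sscript{A}}$ and $N=A$. Hence $A$ is simple.

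\textbf{Part (b), reduction.} Under \textit{(GT1)} every comodule is $A$-projective by Proposition \ref{prop:GT}, hence flat; the task is to upgrade flatness to faithful flatness for every nonzero comodule. My plan has two stages, and I would first reduce to the finitely generated case. By \textit{(GT13)} any right $\cH$-comodule $M$ is the filtered union of its finitely generated subcomodules $M_{\Sscript{i}}$, each of which is $A$-projective, hence dualizable by Lemma \ref{lema:dualizable}. Granting that every nonzero such $M_{\Sscript{i}}$ is faithfully flat, then for a maximal ideal $\fk{m}\subset A$ one has $M\tensor{A}A/\fk{m}=\varinjlim M_{\Sscript{i}}/\fk{m}M_{\Sscript{i}}$; because each quotient comodule $M_{\Sscript{j}}/M_{\Sscript{i}}$ is $A$-flat, the vanishing of $\mathrm{Tor}_{1}^{\Sscript{A}}(M_{\Sscript{j}}/M_{\Sscript{i}},A/\fk{m})$ keeps the transition maps $M_{\Sscript{i}}/\fk{m}M_{\Sscript{i}}\to M_{\Sscript{j}}/\fk{m}M_{\Sscript{j}}$ injective, so this filtered colimit of nonzero spaces is nonzero. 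As this holds for every maximal ideal, $M$ is faithfully flat.

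\textbf{Part (b), the crux.} It remains to treat a nonzero dualizable comodule $M$, and this is the heart of the matter. By dualizability (Lemma \ref{lema:dualizable}) the coevaluation $\Sf{db}\colon (A,\Sf{t})\to (M\tensor{A}M^{*},\varrho)$ is a morphism of right $\cH$-comodules, and since it is $A$-linear one has $\Sf{db}(a)=a\cdot \mathrm{id}_{\Sscript{M}}$ under $M\tensor{A}M^{*}\cong\End{A}{M}$; hence $\ker(\Sf{db})=\mathrm{Ann}_{\Sscript{A}}(M)$. Being the kernel of a comodule morphism, $\mathrm{Ann}_{\Sscript{A}}(M)$ is a subcomodule of $(A,\Sf{t})$, so by Part (a) it equals $0$ or $A$. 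It cannot be $A$, for that would force $\mathrm{id}_{\Sscript{M}}=\Sf{db}(1_{\Sscript{A}})=0$ and $M=0$; therefore $\mathrm{Ann}_{\Sscript{A}}(M)=0$. A finitely generated projective module with zero annihilator has full support, so all its fibres are nonzero and $M$ is faithfully flat, which finishes the reduction. The main obstacle is exactly this identification: realizing the annihilator of a dualizable comodule as a subcomodule of $A$ through the coevaluation, which is what lets Part (a) carry the load; the remaining delicate point is the colimit bookkeeping above, where flatness of the quotient comodules is precisely what keeps the reduced transition maps injective.
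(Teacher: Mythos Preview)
Your proof is correct, and in both parts you take a route genuinely different from the paper's. For (a), the paper also reduces to an idempotent $e$ with $N=eA$, but it establishes $\Sf{s}(e)=\Sf{t}(e)$ via a somewhat involved computation using the dualizability of $N$ and the $\cH$-colinearity of the evaluation map $\Sf{ev}:N^{*}\tensor{A}N\to A$; your antipode trick---applying $\mathscr{S}$ to the relation $\Sf{s}(e)\Sf{t}(e)=\Sf{t}(e)$ to obtain the symmetric identity $\Sf{s}(e)\Sf{t}(e)=\Sf{s}(e)$---is shorter and more transparent. For (b), the paper argues with the \emph{evaluation}: the image of $\Sf{ev}:M^{*}\tensor{A}M\to A$ is a nonzero subcomodule of the simple comodule $A$, hence all of $A$, and surjectivity of $\Sf{ev}$ then forces $M\tensor{A}X=0\Rightarrow X=0$. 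Your argument is the dual one, using injectivity of the \emph{coevaluation} $\Sf{db}$: its kernel $\mathrm{Ann}_{A}(M)$ is a proper subcomodule of $A$, hence zero, and a finitely generated projective module with zero annihilator has full support and is therefore faithfully flat. The two approaches are of equal strength; yours has the mild advantage of naming the obstruction explicitly. The filtered-colimit reduction is essentially the same in both proofs: the paper invokes the $A$-splitting of the inclusions $M_{i}\hookrightarrow M_{j}$ (coming from projectivity of $M_{j}/M_{i}$) where you phrase the same fact as the vanishing of $\mathrm{Tor}_{1}^{A}(M_{j}/M_{i},A/\fk{m})$.
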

\begin{proof}
$(a)$. First let us check that, under the assumption $(G11)$,  any subcomodule of $(A,\Sf{t})$ is a direct summand in $\rcomod{\cH}$.    So let $(I,\varrho_{\Sscript{I}})$ be an $\cH$-subcomodule of $(A, \Sf{t})$. Then $A/I$ is an $\cH$-comodule which is finitely generated and projective as an $A$-module, by assumption $(GT11)$. Therefore, $I$ is a direct summand of $A$ as an  $A$-submodule.
Denotes by $\pi :A \to I$ the canonical projection of $A$-modules, and let  $e^2=e$ be an idempotent element in $ A$ such that $I=eA$ and $\pi(a)=ea$, for every $a \in A$. 

Next we show that $\pi$ is a morphism of right $\cH$-comodules, which proves that $(I,\varrho_{\Sscript{I}})$ is a direct summand of $(A,\Sf{t})$. To this end, it suffices to check that $\Sf{s}(e)=\Sf{t}(e)$, since we know that ${\rm End}^{\Sscript{\cH}}(A)=A^{\Sscript{coinv_{\cH}}}=\{ a \in A|\,\, \Sf{t}(a)\,=\, \Sf{s}(a)\}$.  Clearly the coaction of $I$ is entirely defined by the image of $e$, and we can write   $\varrho_{\Sscript{I}}(e)=e \tensor{A}u$, for some element $u \in \cH$,  which satisfies  the following equalities
\begin{equation}\label{Eq:e}
\Sf{s}(e)\,u \,=\, \Sf{t}(e)1_{\Sscript{\cH}}, \quad e\tensor{A}u\tensor{A}u\,=\, e\tensor{A}u_{\Sscript{(1)}}\tensor{A}u_{\Sscript{(2)}} \in I\tensor{A}\cH\tensor{A}\cH,
\end{equation} where the first equation comes from the fact that the inclusion $I \hookrightarrow A$ is a morphism of $\cH$-comodules. 

On the other hand, we know by Lemma \ref{lema:dualizable} that $I$ is a dualizable right $\cH$-comodule. Up to canonical isomorphism, its dual comodule have for the underlying $A$-module, the module $I^*=eA$ with coaction  $\varrho_{\Sscript{I^*}}: eA \to eA \tensor{A} \cH$, sending $ea \mapsto e \tensor{A} \Sf{t}(ea) \mathscr{S}(u)$ given by equation \eqref{Eq:star}. The evaluation map $\Sf{ev}: I^*\tensor{A}I \to A$, $ea\tensor{A} ea' \mapsto eaa'$ of equation \eqref{Eq:evdb}, is then a morphism of right $\cH$-comodules. Therefore, we have the following equality 
\begin{equation}\label{Eq:AB}
1 \tensor{A} \Sf{t}(e) \,=\, e \tensor{A} \Sf{t}(e) \mathscr{S}(u) u \; \in\,  A\tensor{A} {}_{\Sscript{\Sf{s}}}\cH\,\cong \, {}_{\Sscript{\Sf{s}}}\cH. 
\end{equation}
Combining the first equality of equation \eqref{Eq:e} and equation \eqref{Eq:AB}, we get 
$\Sf{t}(e)\mathscr{S}(u) \overset{}{=} \Sf{t}(e)$. Hence $\Sf{s}(e) u = \Sf{s}(e)$, and so $\Sf{s}(e)=\Sf{t}(e)$, by the first equality in \eqref{Eq:e}.

We have then show that any $\cH$-subcomodule of the $\cH$-comodule  $A$ is a direct summand, since by $(GT3)$ the endomorphism ring is a field ${\rm End}^{\Sscript{\cH}}(A) \cong \Bbbk$, we conclude that $A$ is a simple $\cH$-comodule. 

$(b)$. By  Proposition \ref{prop:GT}, we know that $(A,\cH)$ satisfies conditions  $(GT11)$-$(GT13)$. Let us first show that any comodule in $\frcomod{\cH}$ is faithfully flat as an $A$-module. By condition $(GT11)$, we know that any comodule in this subcategory is  finitely generated and projective as $A$-module, so it is flat as an $A$-module.  Moreover, we know from Lemma \ref{lema:dualizable} that the subcategory $\frcomod{\cH}$ consists exactly of dualizable right $\cH$-comodules. Let us then pick a dualizable comodule $M \in \frcomod{\cH}$, and assume that $M \tensor{A}X=0$ for some $A$-module $X$. This  in particular implies that  $\Sf{ev}_{\Sscript{M}}\tensor{A}X\,=\,0$, from which we get that  $A\tensor{A}X\cong X=0$, as $\Sf{ev}_{\Sscript{M}}$ is surjective, since we already know  by item $(a)$ that $A$ is a  simple comodule.   This shows that every object in $\frcomod{\cH}$ is faithfully flat as an $A$-module.

For an arbitrary comodule,   we know by condition  $(GT13)$ stated in Proposition \ref{prop:GT}, that any right $\cH$-comodule is a filtrated limit of subcomodules in $\frcomod{\cH}$.  Therefore, any right $\cH$-comodule is a flat $A$-module. Given now a right $\cH$-comodule $M$ and assume that  $M \tensor{A}X=0$, for some $A$-module $X$. We have that $M={\varinjlim}(M_{\Sscript{i}})$
where  $\{\tau_{\Sscript{ij}}:M_{\Sscript{i}} \hookrightarrow M_j\}_{\Sscript{i \leq j\, \in\, \Lambda}}$  is a filtrated system in $\frcomod{\cH}$ with  structural morphisms $\tau_{\Sscript{ij}}$ which are split morphisms of $A$-modules. This limit is also a filtrated limit of $A$-modules, and so the equality $\varinjlim(M_{\Sscript{i}}\tensor{A}X)\cong M\tensor{A}X =0$ implies  that there exists some $j \in \Lambda$, such that $M_{\Sscript{j}}\tensor{A}X=0$. Hence $X=0$, since $M_{\Sscript{j}}$ is a faithfully flat $A$-module by the previous argumentation.
\end{proof}

\begin{lemma}\label{lema:etamono}
Let $(A,\cH)$ be a flat Hopf algebroid which satisfies conditions $(GT11)$ and $(GT3)$. Then the $\Bbbk$-algebra map $\etaup: A\tensor{}A \to \cH$ is injective. In particular, if $(A,\cH)$ is geometrically transitive, then $\etaup$ is injective. 
\end{lemma}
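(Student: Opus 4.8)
The plan is to recognize $\etaup$ as a morphism of right $\cH$-comodules and then to exploit that, under $(GT11)$ and $(GT3)$, the unit comodule $A$ is simple (Lemma \ref{lema:1-2}(a)). First I would equip $A\tensor{}A$ with a right $\cH$-comodule structure making $\etaup$ colinear. Viewing $A\tensor{}A$ as a right $A$-module through its second tensorand and setting $\varrho(a\tensor{}a') = (a\tensor{}1)\tensor{A}\Sf{t}(a')$, counitality follows from $\varepsilon(\Sf{t}(a'))=a'$ and coassociativity from $\Delta(\Sf{t}(a')) = 1_{\Sscript{\cH}}\tensor{A}\Sf{t}(a')$; moreover $\etaup$ is right $A$-linear via $\Sf{t}$, and the identity $\Delta(\Sf{s}(a)\Sf{t}(a')) = \Sf{s}(a)\tensor{A}\Sf{t}(a')$ shows $(\etaup\tensor{A}\cH)\circ\varrho = \Delta\circ\etaup$. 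Hence $\etaup\colon (A\tensor{}A,\varrho)\to(\cH_{\Sscript{\Sf{t}}},\Delta)$ is a morphism in $\rcomod{\cH}$, so $\ker\etaup$ is a subcomodule.

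The structural heart of the argument is that $(A\tensor{}A,\varrho)$ is a coproduct of copies of the unit comodule $(A,\Sf{t})$. Choosing a $\Bbbk$-basis $\{b_{\Sscript{i}}\}_{\Sscript{i\in I}}$ of the first tensorand gives a right $A$-module decomposition $A\tensor{}A = \bigoplus_{\Sscript{i}}(b_{\Sscript{i}}\tensor{}A)$, and I would check that each summand $b_{\Sscript{i}}\tensor{}A$, with the coaction restricted from $\varrho$, is isomorphic to $(A,\Sf{t})$ via $b_{\Sscript{i}}\tensor{}a'\mapsto a'$. Since $\cH$ is flat, $\rcomod{\cH}$ is a Grothendieck category, and $A$ is simple by Lemma \ref{lema:1-2}(a); thus $A\tensor{}A\cong A^{\Sscript{(I)}}$ is semisimple, so every nonzero subcomodule contains a subobject isomorphic to $A$, equivalently a nonzero coinvariant element.

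It then remains to compute the coinvariants and reach a contradiction. After identifying $(A\tensor{}A)\tensor{A}{}_{\Sscript{\Sf{s}}}\cH\cong A\tensor{}\cH$, the condition $\varrho(x)=x\tensor{A}1_{\Sscript{\cH}}$ forces the coefficients of $x$ (expanded in the basis $\{b_{\Sscript{i}}\}$) to lie in $A^{\Sscript{coinv_{\cH}}}=\{a\in A\mid \Sf{s}(a)=\Sf{t}(a)\}$, which equals $\Bbbk 1_{\Sscript{A}}$ by $(GT3)$; a short bookkeeping then yields $(A\tensor{}A)^{\Sscript{coinv_{\cH}}} = A\tensor{}1_{\Sscript{A}}$. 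Consequently, if $\ker\etaup\neq 0$ it would contain a nonzero coinvariant $a_{\Sscript{0}}\tensor{}1_{\Sscript{A}}$; but $\etaup(a_{\Sscript{0}}\tensor{}1_{\Sscript{A}}) = \Sf{s}(a_{\Sscript{0}})$, so $\Sf{s}(a_{\Sscript{0}})=0$, and the faithful flatness (hence injectivity) of $\Sf{s}$ forces $a_{\Sscript{0}}=0$, a contradiction. Thus $\ker\etaup=0$. The final ``in particular'' clause is immediate, since geometric transitivity entails $(GT1)$ -- whence $(GT11)$ by Proposition \ref{prop:GT} -- together with $(GT3)$.

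I expect the main obstacle to be the two comodule-theoretic bookkeeping steps: verifying that the prescribed $\varrho$ really is a coaction for which $\etaup$ is colinear, and the explicit determination of the coinvariants. Both require carefully tracking the two distinct $A$-module structures on $\cH$ (via $\Sf{s}$ and via $\Sf{t}$) and the corresponding balancing in the tensor products over $A$. The appeal to semisimplicity of the possibly infinite coproduct $A^{\Sscript{(I)}}$ is routine in a Grothendieck category, but I would state it explicitly rather than leave it implicit.
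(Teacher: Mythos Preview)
Your proof is correct, but it takes a more hands-on route than the paper's. Both arguments rest on the same key input, namely that $(A,\Sf{t})$ is a simple comodule (Lemma~\ref{lema:1-2}(a)). The paper then invokes \cite[Theorem~3.1]{Brzezinski:2005}, which says that for a simple comodule the evaluation map $\cohom{\cH}{A}{\cH}\tensor{}A\to\cH$ is a monomorphism, and identifies this map with $\etaup$ via the adjunction isomorphism $\cohom{\cH}{A}{\cH}\cong A$. Your argument instead unwinds this in elementary terms: you exhibit $A\tensor{}A$ as the isotypic comodule $A^{(I)}$, observe that any nonzero subcomodule must contain a coinvariant (by restricting to a finite sub-coproduct, which is semisimple), compute the coinvariants explicitly as $A\tensor{}1_{\Sscript{A}}$, and then use injectivity of $\Sf{s}$. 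The paper's approach is shorter because it outsources the semisimplicity bookkeeping to an external reference; yours is longer but entirely self-contained and makes transparent exactly where each hypothesis enters.
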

\begin{proof}
We know from Lemma \ref{lema:1-2}(a) that $A$ is a simple $\cH$-comodule. Therefore, by \cite[Theorem 3.1]{Brzezinski:2005}, the following map 
$$
\cohom{\cH}{A}{\cH} \tensor{} A \longrightarrow \cH, \qquad \Big( f \tensor{\Bbbk}a \longmapsto f(a) \Big)
$$
is a monomorphism, which is, up to the isomorphism $\cohom{\cH}{A}{\cH} \cong A$ derived from the adjunction between the forgetful functor $\mathscr{U}_{\Sscript{\cH}}$ and the functor $ -\tensor{A}{}_{\Sscript{\Sf{s}}}\cH$, is exactly the map $\etaup$. Hence $\etaup$ is injective.  The particular case is immediately obtained form Definition \ref{def:GT} and Proposition \ref{prop:GT}.
\end{proof}

\begin{remark}[Transitive Hopf algebroids]\label{rem:Transitif}
Recall from \cite[D\'efinitions pages 5838, 5850]{Bruguieres:1994} that a Hopf algebroid $(A, \cH)$ with $A\neq 0$, is said to be \emph{transitive} if  it satisfies conditions $(GT12)$, $(GT13)$ and  $(GT2)$, $(GT3)$ from Proposition \ref{ssec:DBP} and Definition \ref{def:GT}, respectively,  and every comodule in $\frcomod{\cH}$ is projective. 
 Thus  the geometrically transitive property implies the transitive one. The converse holds true if the the center of the  division ring of any simple comodule (left or right one) is a separable field extension of $\Bbbk$, that is, if $(A,\cH)$ is a separable Hopf algebroid over $\Bbbk$,  as introduced in \cite[D\'efinition page 5847]{Bruguieres:1994}. Obviously, over a perfect field $\Bbbk$ both notions coincide. 
It is noteworthy to mention that if we consider the associated presheaf $\hH$ of a transitive Hopf algebroid $(A,\cH)$, it is not clear, at least to us, how to express the transitivity of $(A,\cH)$  in terms of certain topology at the level of $\hH$. Lastly, let us mention that in general a Hopf algebroid $(A,\cH)$ is a geometrically transitive if and only if   $(\AL, \HL)$ is transitive, for any filed extension $L$ of $\Bbbk$ (see \cite[Proposition 7.3 page 5851]{Bruguieres:1994}), perhaps this justifies the terminology ``\emph{geometrically transitive}''.  
\end{remark}

We finish this section by characterizing  dualizable objects over GT Hopf algebroids and by making some useful remarks on these algebroids.  

\begin{proposition}\label{prop:dualizable}
Let $(A,\cH)$ be a flat Hopf algebroid. Assume that $(A,\cH)$ satisfies the following  condition:
\begin{enumerate}[ ]
\item $(GT11)'$ Every finitely generated right $\cH$-comodule is projective. 
\end{enumerate} 
Then the full subcategory of $\rcomod{\cH}$ of dualizable objects coincides with $\frcomod{\cH}$. In particular, if $(A,\cH)$ is geometrically transitive, then the category $\frcomod{\cH}$ consists of all dualizable right $\cH$-comodules.
\end{proposition}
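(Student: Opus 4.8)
The plan is to read off the statement directly from Lemma \ref{lema:dualizable}, which has already done all the real work: it characterizes the dualizable right $\cH$-comodules as precisely those comodules whose underlying $A$-module is finitely generated and projective. The only extra ingredient is the hypothesis $(GT11)'$, which supplies the projectivity for free. Since both the dualizable objects and $\frcomod{\cH}$ are \emph{full} subcategories of $\rcomod{\cH}$, it suffices to show they have the same objects, so the argument reduces to a double inclusion at the level of comodules.

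First I would dispatch the inclusion ``dualizable $\subseteq \frcomod{\cH}$'', which does not even require $(GT11)'$. By the final assertion of Lemma \ref{lema:dualizable}, every dualizable right $\cH$-comodule has finitely generated (and projective) underlying $A$-module, and hence lies in $\frcomod{\cH}$ by the very definition of that subcategory. For the reverse inclusion, I would take an arbitrary $M \in \frcomod{\cH}$: by definition its underlying $A$-module is finitely generated, and by hypothesis $(GT11)'$ it is projective as an $A$-module. Thus $M$ is finitely generated and projective over $A$, and the first assertion of Lemma \ref{lema:dualizable} then produces an explicit dual $(M^*, \varrho_{\Sscript{M^*}})$ exhibiting $M$ as a dualizable object. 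Combining the two inclusions gives the desired equality of full subcategories.

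For the ``in particular'' clause, I would simply note that a geometrically transitive Hopf algebroid satisfies $(GT1)$ by Definition \ref{def:GT}, so Proposition \ref{prop:GT} applies and yields $(GT11)$, namely that \emph{every} right $\cH$-comodule is projective as an $A$-module. A fortiori every finitely generated right $\cH$-comodule is projective, so $(GT11)'$ holds and the preceding paragraph applies verbatim, giving that $\frcomod{\cH}$ is exactly the category of dualizable right $\cH$-comodules. I do not anticipate any genuine obstacle here: the substantive content has been front-loaded into Lemma \ref{lema:dualizable} and Proposition \ref{prop:GT}, and this proposition is their formal combination; the only point requiring care is to match the two notions of ``projective'' (always meaning projective as an $A$-module, as in $(GT11)$) so that $(GT11)$ is correctly seen to imply $(GT11)'$.
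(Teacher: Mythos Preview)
Your proposal is correct and follows essentially the same approach as the paper: both inclusions are read off from Lemma \ref{lema:dualizable}, with $(GT11)'$ supplying projectivity for the reverse inclusion, and the particular case is obtained by invoking Proposition \ref{prop:GT} to get $(GT11)$ (hence $(GT11)'$) from the GT hypothesis. The paper's proof is more terse but the logical structure is identical.
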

\begin{proof}
By Lemma \ref{lema:dualizable}, every dualizable right $\cH$-comodule is finitely generated and projective as an $A$-module. This gives the direct inclusion.  Conversely,  any object in $\frcomod{\cH}$ is, by condition $(GT11)'$ and Lemma \ref{lema:dualizable}, a dualizable right  $\cH$-comodule, form which we obtain the other inclusion. The particular case of GT Hopf algebroids  follows directly from Proposition \ref{prop:GT}.
\end{proof}

\begin{remark}\label{rem:generators}
Let $(A,\cH)$ be a GT Hopf algebroid. Then, by condition \emph{(GT13)} of Proposition \ref{ssec:DBP} and Proposition  \ref{prop:dualizable}, the category of comodules $\rcomod{\cH}$ has a set of small projective generators, which we denote by $\cA$. Therefore, by applying \cite[Theorem 5.7]{K/GT:2004}, we obtain that the canonical map $\Sf{can}: \lL(\oO) \to \cH$ is an isomorphism of Hopf algebroids, where $\oO: \cA \to \mathsf{proj}(A)$ is the forgetful functor to the category of finitely generated and projective $A$-modules,  and  where $\lL(\oO)$ is the Hopf algebroid reconstructed from the pair $(\cA, \oO)$, see \cite{Bruguieres:1994} and also \cite{K/GT:2004} for the explicit description of the underlying $A$-coring of $\lL(\oO)$. 
\end{remark}

\subsection{Characterization by means of weak equivalences}\label{sec:3} 
This subsection contains our main result. We give several  new characterizations of geometrically transitive flat Hopf algebroids.  The most striking  one is  the characterization of these Hopf algebroids  by means of weak equivalences, which can be   seen as the geometric counterpart of the characterization of transitive groupoids, as we have shown in subsection \ref{ssec:TGrpd}, precisely in Proposition \ref{prop:grpd}.

\begin{theorem}\label{thm:A}
Let $(A,\cH)$ be a flat Hopf algebroid over a field $\Bbbk$ and  denote by $\hH$ its associated presheaf of groupoids.  Assume that $\hHo(\Bbbk) \neq \emptyset$. 
Then the following are equivalent:
\begin{enumerate}[(i)]
\item $\etaup: A\tensor{}A \to \cH$ is a  faithfully flat extension;
\item Any two objects of $\mathscr{H}$ are fpqc locally isomorphic (see Definition \ref{def:1});
\item For any extension $\phi: A \to B$, the  extension $\alpha: A \to \cH_{\Sscript{\Sf{t}}}\tensor{A}{}_{\Sscript{\phi}}B$, $a \mapsto \Sf{s}(a)\tensor{A}1_{\Sscript{B}}$ is  faithfully flat;
\item $(A,\cH)$ is geometrically transitive (Definition \ref{def:geo-tran}). 
\end{enumerate} 
\end{theorem}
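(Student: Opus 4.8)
The plan is to take condition (i) as the hub and establish the equivalences (i)$\Leftrightarrow$(ii), (i)$\Leftrightarrow$(iii) and (i)$\Leftrightarrow$(iv) separately: the first two rest on short faithfully-flat base-change manipulations, whereas the last is the algebraic core and relies on the lemmas of Section~\ref{sec:2}.

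The two directions \emph{out of} (i) are immediate. For (i)$\Rightarrow$(ii), given an algebra $C$ and two objects $x,y\in A(C)$, I would form the $C$-algebra $C':=\cH\tensor{A\tensor{}A}C$, where $A\tensor{}A$ acts on $C$ through $x\tensor{}y\colon A\tensor{}A\to C$, $a\tensor{}a'\mapsto x(a)y(a')$, and on $\cH$ through $\etaup$. Since $\etaup$ is faithfully flat, its base change $p\colon C\to C'$, $c\mapsto 1_{\Sscript{\cH}}\tensor{A\tensor{}A}c$, is again faithfully flat, and the algebra map $g\colon\cH\to C'$, $u\mapsto u\tensor{A\tensor{}A}1_{\Sscript{C}}$, satisfies $g\circ\Sf{s}=p\circ x$ and $g\circ\Sf{t}=p\circ y$ (each identity follows by sliding the relevant factor of $A\tensor{}A$ across the tensor product), which is exactly Definition~\ref{def:1}. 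For (i)$\Rightarrow$(iii), given $\phi\colon A\to B$ I would use the identification $\cH_{\Sscript{\Sf{t}}}\tensor{A}{}_{\Sscript{\phi}}B\cong\cH\tensor{A\tensor{}A}(A\tensor{}B)$, with $A\tensor{}A\to A\tensor{}B$ equal to $\id\tensor{}\phi$ and $\alpha$ corresponding to the first-factor inclusion $A\to A\tensor{}B$; faithful flatness of $\etaup$ base-changes to faithful flatness of $A\tensor{}B\to\cH\tensor{A\tensor{}A}(A\tensor{}B)$, and composing with the faithfully flat unit $A\to A\tensor{}B$ (here using $B\neq0$) shows that $\alpha$ is faithfully flat.

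For the converse geometric directions (ii)$\Rightarrow$(i) and (iii)$\Rightarrow$(i), surjectivity of $\Spec{\cH}\to\Spec{A\tensor{}A}$ is the easy half: applying (ii) to the universal pair $C=A\tensor{}A$, $x\colon a\mapsto a\tensor{}1$, $y\colon a\mapsto 1\tensor{}a$ produces a faithfully flat $p$ and an arrow $g$ with $g\circ\etaup=p$, so $\etaup^{*}$ is surjective, and from (iii) the same follows from surjectivity of each $\alpha^{*}$. The flatness of $\cH$ over $A\tensor{}A$ must then be extracted: from (iii) it should drop out of the family $\{\cH_{\Sscript{\Sf{t}}}\tensor{A}{}_{\Sscript{\phi}}B\}_{\phi}$ through the fibral (relative) flatness criterion, since $\cH_{\Sscript{\Sf{t}}}$ is already $A$-flat; from (ii) it would be obtained by faithfully flat descent, using the universal local isomorphism to manufacture a faithfully flat algebra map out of $\cH$ (this is where the translation Hopf algebroid / principal-bundle picture of Section~\ref{sec:HAlgd} enters).

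Finally, for the core equivalence (i)$\Leftrightarrow$(iv): in the direction (iv)$\Rightarrow$(i), condition $(GT1)$ gives that $\cH$ is projective, hence flat, over $A\tensor{}A$, while Lemma~\ref{lema:etamono} gives that $\etaup$ is injective, and faithfulness (surjectivity on spectra) follows from the simplicity of the unit comodule $A$ established in Lemma~\ref{lema:1-2}(a), which forces every geometric fibre to be non-empty. In the direction (i)$\Rightarrow$(iv), I would first deduce $(GT3)$, namely $A^{\Sscript{coinv_{\cH}}}=\Bbbk$, from the local isomorphism applied to $C=A$ with $x=\id_{\Sscript{A}}$ and a constant $y$ built from the assumed point in $A(\Bbbk)\neq\emptyset$, and then invoke Lemma~\ref{lema:1-2} and Proposition~\ref{prop:dualizable} to obtain that $\frcomod{\cH}$ is locally of finite type $(GT2)$. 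The genuinely hard point is $(GT1)$, i.e.\ upgrading the faithful flatness of $\cH$ over $A\tensor{}A$ to projectivity; I expect this to be the main obstacle, and I would attack it by viewing $\cH$ as a comodule algebra over the tensor-square Hopf algebroid $(A\tensor{}A,\cH^{\Sscript{o}}\tensor{}\cH)$ and showing, via faithfully flat descent together with the dualizability results, that such a faithfully flat comodule is necessarily projective over its base.
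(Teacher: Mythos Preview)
Your hub-and-spoke plan diverges from the paper's cyclic proof $(i)\Rightarrow(ii)\Rightarrow(iii)\Rightarrow(iv)\Rightarrow(i)$, and two of your spokes have genuine gaps.

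For $(iv)\Rightarrow(i)$: knowing that $\cH$ is projective (hence flat) over $A\tensor{}A$ and that $\etaup$ is injective is fine, but your claim that ``simplicity of the unit comodule $A$ forces every geometric fibre to be non-empty'' is not justified. Simplicity of $A$ is a statement about $\cH$-subcomodules of $A$; it says nothing directly about the fibres of $\cH$ over prime ideals of $A\tensor{}A$. The paper's argument is different and is the point you are missing: one passes to the tensor Hopf algebroid $(B,\cK)=(A\tensor{}A,\cH^{\Sscript{o}}\tensor{}\cH)$, checks via Lemma~\ref{lema:MN} that $\CoEnd{\cK}{B}\cong\Bbbk$, and then applies Lemma~\ref{lema:1-2}(b) to \emph{that} Hopf algebroid to conclude that every right $\cK$-comodule---in particular $\cH$ itself---is faithfully flat over $A\tensor{}A$. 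Your sketch invokes Lemma~\ref{lema:1-2} only for $(A,\cH)$, which yields faithful flatness over $A$, not over $A\tensor{}A$.

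For $(i)\Rightarrow(iv)$: your route to $(GT3)$ is correct, but your plans for $(GT2)$ and $(GT1)$ are too vague to work. Lemma~\ref{lema:1-2} and Proposition~\ref{prop:dualizable} do not by themselves give that $\frcomod{\cH}$ is locally of finite type, and ``faithfully flat descent together with dualizability'' does not upgrade faithful flatness of $\cH$ over $A\tensor{}A$ to projectivity without further input. The paper does not attempt a direct $(i)\Rightarrow(iv)$; instead it proves $(iii)\Rightarrow(iv)$ using an external ingredient you have not mentioned: by \cite[Proposition~4.1]{Kaoutit/Kowalzig:14}, condition~$(iii)$ applied to a point $x\in A(\Bbbk)$ makes the base-change morphism $(A,\cH)\to(\Bbbk_{\Sscript{x}},\cH_{\Sscript{x}})$ a \emph{weak equivalence}. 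This transports the problem to the Hopf $\Bbbk$-algebra $\cH_{\Sscript{x}}$, where $(GT2)$ and $(GT3)$ are automatic, and $(GT1)$ is obtained by applying the same weak-equivalence argument to the tensor Hopf algebroid $(A\tensor{}A,\cH^{\Sscript{o}}\tensor{}\cH)$ together with \cite[Proposition~5.1(ii)]{Bruguieres:1994}. Without this reduction to a Hopf algebra over a field, I do not see how your outline closes. Your $(i)\Rightarrow(ii)$ and $(i)\Rightarrow(iii)$ are fine and essentially match the paper, but the converse spokes $(ii)\Rightarrow(i)$ and $(iii)\Rightarrow(i)$ are also left at the level of ``should drop out of the fibral flatness criterion'' and ``faithfully flat descent'', which is not a proof.
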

By \cite[Proposition 5.1]{Kaoutit/Kowalzig:14}, condition $(iii)$ in Theorem \ref{thm:A} is also equivalent to the following ones:
\begin{enumerate}
\item[\emph{(v)}] \emph{ For any extension $\phi: A \to B$, the associated canonical morphism of ${\B \phi}:(A,\cH) \to (B,\cH_{\Sscript{\phi}})$ is a weak equivalence};
\item[\emph{(vi)}] \emph{The trivial principal left $(\cH, \cH_{\Sscript{\phi}})$-bundle $\cH\tensor{A}B$ is a principal bi-bundle.}
\end{enumerate}

Given a GT Hopf algebroid $(A,\cH)$ and  an extension $\phi:A\to B$. Since $\cH_{\Sscript{\phi}}$ is a  flat Hopf algebroid,  the forgetful functor $\rcomod{\cH_{\Sscript{\phi}}} \to \rmod{B}$ is exact. Therefore, condition $(v)$ implies that $B$ is \emph{Landweber exact} over $A$, in the sense that the  functor $\mathscr{U}_{\Sscript{\cH}}(-)\tensor{A}B: \rcomod{\cH} \to \rmod{B}$ is exact, see \cite[Definition 2.1]{HovStr:CALEHT}.

\begin{example}
The following Hopf algebroids $(A,A\tensor{}A)$ and $(A,(A\tensor{}A)[X,X^{-1}])$  described, respectively, in Examples  \ref{exam:HAlgd1} and \ref{exam:HAlgd3}, are clearly geometrically transitive.  This is also the case of $(A,A\tensor{}B\tensor{}A)$ for any Hopf algebra $B$.  On the other hand, if $A$ is a right $B$-comodule algebra whose canonical map  $A\tensor{}A \to A\tensor{}B$ is a faithfully flat extension, then the split Hopf algebroid $(A,A\tensor{}B)$ is obviously geometrically transitive. 

A more elaborade example of GT Hopf algebroid, by using principal bundles over Hopf algebras (i.e., Hopf Galois extensions),  is given in Proposition \ref{prop:GTPB} below. 
\end{example}

Next, we give the proof of Theorem \ref{thm:A}.\smallskip

\emph{The proof of $(i) \Rightarrow (ii)$}.  Let $C$ be an algebra and $x,y$ 
two objects in $A(C)$. Denote by $x \tensor{}y: A\tensor{}A \to C$ the associated algebra map and consider  the obvious algebra map $p: C \to C':= \cH\tensor{A\tensor{}A}C$. By assumption it is clear  that $p$ is a faithfully flat extension. Set the algebra map $g : \cH \to C'$ which sends $u \mapsto u \tensor{A\tensor{}A}1_{\Sscript{C}}$. We then have that $p \circ x = g \circ \Sf{s}$ and $p \circ y = g \circ \Sf{t}$, which shows that $x$ and $y$ are locally isomorphic. 

\emph{The proof of $(ii) \Rightarrow (iii)$}. We claim that under hypothesis $(ii)$ the underlying $A$-module of any (left or right) $\cH$-comodule is faithfully flat. In particular, this implies that the comodule $\cH\tensor{A}B$, with coaction  $\Delta\tensor{A}B$,  is faithfully flat for every $A$-algebra $B$, and this gives us condition $(iii)$. Since there is an isomorphism of categories between right $\cH$-comodules and left $\cH$-comodules, which commutes with forgetful functors, it suffices then to show the above claim for right $\cH$-comodules.

So let us fix a right $\cH$-comodule $M$ and take two objects in different fibres groupoids $x \in A(T)$ and $y \in A(S)$, where $T,S$ are algebras.  We claim that $M\tensor{A}{}_{\Sscript{x}}T$ is faithfully flat $T$-module if and only if $M\tensor{A}{}_{\Sscript{y}}S$ is faithfully flat $S$-module.
Clearly our first claim follows from this one since we know that $A(\Bbbk) \neq \emptyset$ and over a field any module is faithfully flat. 

Let us then check this second claim; we  first assume that $R=T=S$. In this case, we know that any pair of objects $x, y \in A(R)$ are fpqc locally isomorphic, thus there exists a faithfully flat extension $p: R \to R'$ and $g \in \cH(R)$ such that $\td{x}:=p \circ x=g \circ \Sf{s}$ and $\td{y}:=p\circ y = g \circ\Sf{t}$. On the other hand, the map 
$$
M\tensor{A}{}_{\Sscript{\td{x}}} R' \longrightarrow M\tensor{A}{}_{\Sscript{\td{y}}} R', \quad \Big(  m\tensor{A}r' \longmapsto m_{\Sscript{(0)}} \tensor{A} g^{-1}(m_{\Sscript{(1)}})r' \Big)
$$
is clearly an isomorphism of $R'$-modules. Therefore, $M\tensor{A}{}_{\Sscript{\td{x}}} R'$ is a faithfully flat $R'$-module if and only if $M\tensor{A}{}_{\Sscript{\td{y}}} R'$ it is. However, we know that $M\tensor{A}{}_{\Sscript{\td{x}}} R' \cong (M\tensor{A}{}_{\Sscript{{x}}}R) \tensor{R} {}_{\Sscript{p}}R'$ is faithfully flat  $R'$-module if and only if $M\tensor{A}{}_{\Sscript{{x}}}R$ is faithfully flat $R$-module, as $p$ is a faithfully flat extension. The same then holds true interchanging $x$ by $y$. Therefore,  $M\tensor{A}{}_{\Sscript{{x}}}R$ is faithfully flat $R$-module if and only if  $M\tensor{A}{}_{\Sscript{{y}}}R$ so is. 

For the general case, that is, when $T \neq S$ with  $x \in A(T)$ and  $y \in A(S)$, we take $R:=T\tensor{}S$ and consider the canonical faithfully flat extensions $T \to R \leftarrow S$. This leads to the following two objects  $\bara{x}: A \to T \to R$ and $\bara{y}: A \to S \to R$. Since $M\tensor{A}{}_{\Sscript{x}}T$ (resp.  $M\tensor{A}{}_{\Sscript{y}}S$)   is faithfully flat $T$-module (resp. $S$-module)  if and only if $M\tensor{A}{}_{\Sscript{\bara{x}}}R$ (resp. $M\tensor{A}{}_{\Sscript{\bara{y}}}R$) is faithfully flat $R$-module, we have, by the proof of the previous case,  that $M \tensor{A}{}_{\Sscript{x}}T$ is faithfully flat $T$-module if and only if  $M\tensor{A}{}_{\Sscript{y}}S$ is faithfully flat $S$-module, and this finishes the proof of this implication.

\emph{The proof of $(iii) \Rightarrow (iv)$}. Take an object $x$ in $A(\Bbbk)$ and denote by $\Bbbk_{\Sscript{x}}$ the base field endowed with its  $A$-algebra  structure via the algebra map  $x: A\to \Bbbk$. By assumption $A \to \cH\tensor{A}\Bbbk_{\Sscript{x}}$ is a faithfully flat extension. Therefore, by \cite[ Proposition 5.1]{Kaoutit/Kowalzig:14},  we know that the associated base change morphism $\Sf{x}: (A,\cH) \to (\Bbbk_{\Sscript{x}},\cH_{\Sscript{x}})$, where $(\Bbbk_{\Sscript{x}}, \cH_{\Sscript{x}})$ is the Hopf  $\Bbbk$-algebra $\cH_{\Sscript{x}}=\Bbbk_{\Sscript{x}}\tensor{A}\cH\tensor{A}\Bbbk_{\Sscript{x}}$, is actually a weak equivalence. This means that the induced functor $\Sf{x}_*:=\mathscr{U}_{\Sscript{\cH}}(-)\tensor{A}\Bbbk_{\Sscript{x}}: \rcomod{\cH} \to \rcomod{\cH_{\Sscript{x}}}$ is a symmetric monoidal equivalence of categories, and thus transforms, up to natural isomorphisms,  dualizable $\cH$-comodules into dualizable $\cH_{\Sscript{x}}$-comodules. Similar property hods true for its inverse functor.  In particular, taking an object $M \in\frcomod{\cH}$, it is clear that $\Sf{x}_{*}(M)=M\tensor{A}\Bbbk_{\Sscript{x}}$ is  finite dimensional $\Bbbk$-vector space and so a dualizable right $\cH_{\Sscript{x}}$-comodue, see for instance Lemma \ref{lema:dualizable}. Therefore, $M$ should be a dualizable right $\cH$-comodule.  The converse is obvious and then the full subcategory $\frcomod{\cH}$ coincides with the full subcategory  of dualizable right $\cH$-comodules, form which we have that $\frcomod{\cH}$ and $\frcomod{\cH_{\Sscript{x}}}$ are equivalent $\Bbbk$-linear categories. Hence    $\frcomod{\cH}$ is locally of finite type, and the endomorphism ring  ${\rm End}^{\Sscript{\cH}}(A) \cong \Bbbk$. This shows simultaneously  conditions $(GT2)$ and $(GT3)$.   

To check condition $(GT1)$ we use the morphism between the tensor product Hopf algebroids, that is,  $\Sf{x}^{\Sscript{o}}\tensor{}\Sf{x}: (A\tensor{}A, \cH^{\Sscript{o}}\tensor{}\cH) \to (\Bbbk_{\Sscript{x}}\tensor{}\Bbbk_{\Sscript{x}}\cong \Bbbk, \cH_{\Sscript{x}}\tensor{}\cH_{\Sscript{x}})$. As we have seen in subsection \ref{ssec:W}, this is also a weak equivalence. Thus the category of right $(\cH^{\Sscript{o}}\tensor{}\cH)$-comodules is equivalent, as a symmetric monoidal category,  to the category of right comodules over the Hopf $\Bbbk$-algebra $\cH_{\Sscript{x}}\tensor{}\cH_{\Sscript{x}}$, which as in the case of $\Sf{x}$ also implies that $\frcomod{\cH^{\Sscript{o}}\tensor{}\cH}$ and $\frcomod{\cH_{\Sscript{x}}\tensor{}\cH_{\Sscript{x}}}$ are equivalent. Therefore,  from one hand, we have  by the same reasoning as above  that any comodule in $\frcomod{\cH^{\Sscript{o}}\tensor{}\cH}$ is  projective as an $(A\tensor{}A)$-module since it is a dualizable comodule.  On the other hand,  we have that  every right $(\cH^{\Sscript{o}}\tensor{}\cH)$-comodule  is a filtrated inductive  limit of objects in $\frcomod{\cH^{\Sscript{o}}\tensor{}\cH}$ since  right  $(\cH_{\Sscript{x}}\tensor{}\cH_{\Sscript{x}})$-comodules satisfies the same property with respect to finite-dimensional right comodules $\frcomod{\cH_{\Sscript{x}}\tensor{}\cH_{\Sscript{x}}}$. Now, by apply \cite[Proposition 5.1(ii)]{Bruguieres:1994} to the $(A\tensor{}A)$-coring $\cH^{\Sscript{o}}\tensor{}\cH$, we then conclude that  every right $(\cH^{\Sscript{o}}\tensor{}\cH)$-comodule is projective as an $(A\tensor{}A)$-module. Thus, $\cH$ is projective as an  $(A\tensor{}A)$-module, which shows condition $(GT1)$.

\emph{The proof of $(iv) \Rightarrow (i)$}. Set $B:=A\tensor{}A$ and $\cK:=\cH^{\Sscript{o}}\tensor{}\cH$. We know that $(B,\cK)$ is a flat Hopf algebroid. Since $\cH$ is projective as $(A\tensor{}A)$-module, we have that $\cK$ is projective as $(B\tensor{}B)$-module. Now, since the map $\etaup$ is injective by Lemma \ref{lema:etamono}, we can apply Lemma \ref{lema:MN} by taking $M=A$ as right $\cH$-comodule and $M'=A$ as right $\cH^{\Sscript{o}}$-comodule, to obtain the following chain of isomorphism
$$
{\rm End}^{\Sscript{\cK}}(B)\,\,=\,\,  {\rm End}^{\Sscript{\cH^{\Sscript{o}}\tensor{}\cH}}(A\tensor{}A)\,\,\cong\,\, {\rm End}^{\Sscript{\cH}^{\Sscript{o}}}(A)\tensor{} {\rm End}^{\Sscript{\cH}}(A)\,\,\cong\,\, \Bbbk \tensor{}\Bbbk \,\, \cong \,\, \Bbbk. 
$$
This means that the Hopf algebroid $(B,\cK)$ satisfies the conditions of Lemma  \ref{lema:1-2}(b). Therefore, any right $\cK$-comodule is faithfully flat as a $B$-module, henceforth, $\cH$ is a faithfully flat  $(A\tensor{}A)$-module. This finishes the proof of Theorem \ref{thm:A}.

\begin{remark}\label{rem:Lk}
Let $(A,\cH)$ be a flat Hopf algebroid  over $\Bbbk$ with $A \neq 0$ and  $A(\Bbbk) =\emptyset$. Then the same proof of the implication $(ii) \Rightarrow (iii)$ in Theorem \ref{thm:A}, works for $(A, \cH)$ by using any field extension $L$ of $\Bbbk$ such that $A(L) \neq \emptyset$. 
Assume now that  $(A,\cH)$ satisfies condition $(i)$ of Theorem  \ref{thm:A} and take a field extension $L$ such that $A(L) \neq \emptyset$. Then  $(\AL, \HL)$ also satisfies this condition and by Theorem  \ref{thm:A} we have that $(\AL,\HL)$ is a GT Hopf algebroid, as we know that $\AL(L) =\Algl(\AL, L) \neq \emptyset$. Furthermore, if $L$ is a perfect field, then by applying \cite[Th\'eor$\mathrm{\grave{e}}$me 6.1, page 5845]{Bruguieres:1994} we can show that  $(A,\cH)$ is a GT Hopf algebroid as well. 
Summing up, given a Hopf algebroid $(A,\cH)$ as above, if its satisfies condition $(i)$ of Theorem  \ref{thm:A}   and $\Bbbk$ admits a perfect extension $L$ such that $A(L) \neq \emptyset$, then $(A,\cH)$ satisfies all the other conditions of this Theorem. 
\end{remark}

\section{More properties of geometrically transitive Hopf algebroids}\label{sec:more}
In this section we give more properties of GT Hopf algebroids.  First we set up an analogous property of transitive groupoids with respect to the conjugacy of their isotropy groups.  To this end we introduce here perhaps a  known notion  of \emph{isotropy Hopf algebra}. This is the affine group scheme which represents the presheaf of groups defined by the isotropy group at each fibre. Next we show that any  two isotropy Hopf algebras are weakly equivalent. The notion of conjugacy between two isotropy Hopf algebras, is not at all obvious, and the $2$-category of flat Hopf algebroids is employed in order to make it clearer. In this direction we show that two isotropy Hopf algebras are conjugated if and only if the character groupoid is transitive, and both conditions are fulfilled is the case of GT Hopf algebroids. 
Lastly, we give an elementary proof of the fact that any dualizable  comodule is locally free of constant rank, which in some sense bear out the same property enjoyed by finite dimensional $\Bbbk$-representations of a given transitive groupoid.  The case when the character groupoid of a GT Hopf algebroid is an empty groupoid, is also analyzed.

\subsection{The isotropy Hopf algebras are weakly equivalent}\label{ssec:isotropy} 
Let $(A,\cH)$ be a flat Hopf algebroid and $\hH$ its associated presheaf  of groupoids. Assume as before that the base algebra satisfies $A\neq 0$ and $A(\Bbbk)\neq \emptyset$, and consider $\mathscr{H}(\Bbbk)$ the character groupoid  of $(A,\cH)$, see Definition \ref{def:characters}. As before,  for each object $x \in A(\Bbbk)$, we denote by $\Bbbk_{\Sscript{x}}$ the $A$-algebra $\Bbbk$ via the extension $x$, and consider  the associated Hopf $\Bbbk$-algebra of a base ring extension (given  by the $\Bbbk$-algebra map $x:A\to \Bbbk_{\Sscript{x}}$), that is,  $\cH_{\Sscript{x}}:=\Bbbk_{\Sscript{x}} \tensor{A} \cH \tensor{A} \Bbbk_{\Sscript{x}}$.

\begin{definition}\label{def:isotropy}
Given an object $x \in A(\Bbbk)$. The  Hopf algebra $(\Bbbk_{\Sscript{x}},\cH_{\Sscript{x}})$  is called  \emph{the isotropy Hopf algebra of $(A,\cH)$ at the point $x$}.  
\end{definition}
It noteworthy to mention that the associated affine $\Bbbk$-group of $(\Bbbk_{\Sscript{x}},\cH_{\Sscript{x}})$ coincides with the one called  \emph{groupe d'inertie de $x$ relativement \`a $\hH$} as referred to in \cite[III, \S 2, n$^{\text{o}}$ 2; page 303]{DemGab:GATIGAGGC}.

The terminology used in Definition \ref{def:isotropy}  is, in relation with groupoids, justified  by the following lemma. 
Fix an object $x \in A(\Bbbk)$, we denoted by $1_{\Sscript{x}}$ the unit element of the $A$-algebra $\Bbbk_{\Sscript{x}}$.  Take $C$ to be an  algebra with unit map $1_{\Sscript{C}}: \Bbbk \to C$. Composing with $x$, we have then an object $x^*(1_{\Sscript{C}})= 1_{\Sscript{C}} \circ x \in A(C)$. Let us denote by $\mathscr{G}^{\Sscript{x}}(C):= \mathscr{H}(C)^{\Sscript{x^*(1_{\Sscript{C}})}}$
 the isotropy group of the object $x^*(1_{\Sscript{C}})$ in the groupoid $\mathscr{H}(C)$, see equation \eqref{Eq:isotropy}. 
This construction is clearly   funtorial and so leads to a presheaf of groups $\mathscr{G}^{\Sscript{x}}: \Algk \to \Sf{Grps}$, $C \to \mathscr{G}^{\Sscript{x}}(C)$.
\begin{lemma}\label{lem:iso}
For any $x \in A(\Bbbk)$, the presheaf of groups $\mathscr{G}^{\Sscript{x}}$ is affine, and up to a natural isomorphism, is  represented by the Hopf $\Bbbk$-algebra $\cH_{\Sscript{x}}$.
\end{lemma}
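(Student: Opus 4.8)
The plan is to identify the Hopf algebra $\cH_{\Sscript{x}}=\Bbbk_{\Sscript{x}}\tensor{A}\cH\tensor{A}\Bbbk_{\Sscript{x}}$ with the base change Hopf algebroid of $(A,\cH)$ along the point $x: A\to \Bbbk_{\Sscript{x}}$, as in Example \ref{exam:Base change}, and then to transport the geometric interpretation of base change recorded right after that example. First I would note that, since the base algebra here is the ground field $\Bbbk$, both the source and the target of $(\Bbbk_{\Sscript{x}},\cH_{\Sscript{x}})$ coincide with the unit map $\Bbbk\to \cH_{\Sscript{x}}$, so that $(\Bbbk_{\Sscript{x}},\cH_{\Sscript{x}})$ is genuinely a commutative Hopf algebra; in particular it represents an affine group scheme, which already accounts for the affineness and the group structure asserted in the statement.

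Next I would establish the natural isomorphism of presheaves $\mathscr{G}^{\Sscript{x}}\cong \cH_{\Sscript{x}}(-)$. The conceptual reason is that, as explained after Example \ref{exam:Base change}, the presheaf of groupoids represented by the base change $(\Bbbk_{\Sscript{x}},\cH_{\Sscript{x}})$ sends an algebra $C$ to the induced groupoid $\hH(C)^{\Sscript{x^*}}$ along $x^*:\Bbbk(C)\to A(C)$. Since $\Bbbk(C)={\rm Alg}_{\Sscript{\Bbbk}}(\Bbbk,C)$ is the one-point set $\{1_{\Sscript{C}}\}$, this induced groupoid has the single object $x^*(1_{\Sscript{C}})$, hence is a group, and by the description in Example \ref{exam:induced} its arrows are exactly those $g\in \cH(C)$ with $\Sf{s}^*(g)=\Sf{t}^*(g)=x^*(1_{\Sscript{C}})$, that is, the isotropy group $\mathscr{G}^{\Sscript{x}}(C)$. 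Concretely, the same bijection can be read off from the universal property of the tensor product: an algebra map $f:\cH_{\Sscript{x}}\to C$ is the same as a single map $g:=f(1\tensor{A}-\tensor{A}1):\cH\to C$, and the defining relations of $\cH_{\Sscript{x}}$ force $g\circ \Sf{s}=1_{\Sscript{C}}\circ x=g\circ \Sf{t}$, which are precisely the equations cutting out $\mathscr{G}^{\Sscript{x}}(C)$ inside $\cH(C)$; naturality in $C$ is immediate.

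The main point to verify, and the only place that requires care, is that this natural bijection is compatible with the two a priori different group laws: the isotropy multiplication $gf:u\mapsto f(u_{\Sscript{(1)}})g(u_{\Sscript{(2)}})$ inherited from the fibre groupoid $\mathscr{H}(C)$, and the convolution product on $\cH_{\Sscript{x}}(C)$ coming from the comultiplication of the Hopf algebra $\cH_{\Sscript{x}}$. Here I would trace the comultiplication $\Delta$ of $\cH$ through the canonical map $\cH\to \cH_{\Sscript{x}}$ and check that the comultiplication it induces on $\cH_{\Sscript{x}}$ yields, on $C$-points, exactly the product $gf$ above; the unit and the antipode are handled identically with $\varepsilon$ and $\mathscr{S}$. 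Since the whole identification is the restriction of the functorial base change construction within the $2$-category of flat Hopf algebroids, this compatibility is essentially forced, and the verification is routine once the orientation conventions for the groupoid multiplication are fixed. Collecting these observations yields the desired natural isomorphism of presheaves of groups between $\mathscr{G}^{\Sscript{x}}$ and the functor of points of the Hopf algebra $\cH_{\Sscript{x}}$.
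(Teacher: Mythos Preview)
Your proposal is correct and, at the concrete level, coincides with the paper's proof: the bijection you describe via the universal property of $\cH_{\Sscript{x}}=\Bbbk_{\Sscript{x}}\tensor{A}\cH\tensor{A}\Bbbk_{\Sscript{x}}$ is exactly the pair of mutually inverse natural transformations $\kappaup$ and $\nuup$ that the paper writes down explicitly (with $\nuup_{\Sscript{C}}(h)=h\circ\tau_{\Sscript{x}}$ being your $g=f(1\tensor{A}-\tensor{A}1)$). The difference is purely one of packaging: you frame the argument conceptually through the base change/induced groupoid correspondence of Example~\ref{exam:Base change} and Example~\ref{exam:induced}, and you make the compatibility with the group laws explicit, whereas the paper simply records the two maps and declares them mutually inverse without further comment.
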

\begin{proof}
Given an element $g$ in the group $\mathscr{G}^{\Sscript{x}}(C)$, that is, an algebra map $g: \cH \to C$ such that $g \circ \Sf{t}= g \circ \Sf{s}= x^*(1{\Sscript{C}})$, we can define the following algebra map:
$$
\kappaup_{\Sscript{C}}(g): \cH_{\Sscript{x}} \longrightarrow C,\quad \big( k1_{\Sscript{x}}\tensor{A}u\tensor{A}k'1_{\Sscript{x}} \longmapsto kk'g(u)\big),
$$
which is clearly functorial in $C$. This leads  to a  natural transformation $\kappaup_{-}: \gG^{\Sscript{x}}(-) \longrightarrow \rm{Alg}_{\Sscript{\Bbbk}}(\cH_{\Sscript{x}}, -)$.

Conversely, to any algebra map $h: \cH_{\Sscript{x}} \to C$, one associate the algebra map 
$$
\nuup_{\Sscript{C}}(h):= h \circ \tau_{\Sscript{x}}:  \cH \longrightarrow \cH_{\Sscript{x}} \longrightarrow C, 
$$
where $\tau_{\Sscript{x}}:\cH \to \cH_{\Sscript{x}}$ sends $u \mapsto 1_{\Sscript{x}} \tensor{A}u \tensor{A}1_{\Sscript{x}}$. This construction  is also functorial in $C$, which  defines a natural transformation $\nuup_{-}: \rm{Alg}_{\Sscript{\Bbbk}}(\cH_{\Sscript{x}},-) \longrightarrow \gG^{\Sscript{x}}(-)$.
It is not difficult now to check that both natural transformations $\kappaup$ and $\nuup$,  are mutually inverse. 
\end{proof}

Recall that for groupoids the transitivity property is  interpreted by means of   conjugation between theirs isotropy groups, which means that any two of these groups are isomorphic. 
Next we show how this last property is reflected  at the level of the isotropy Hopf algebras. The conjugacy of the isotropy Hopf algebras, in relation with the transitivity of the character groupoid,  will be considered in the next subsection. 

\begin{proposition}\label{prop:weak-isotropy}
Let $(A,\cH)$ be a flat Hopf algebroid with $A\neq 0$ and $A(\Bbbk) \neq \emptyset$. Assume that $(A,\cH)$ is  geometrically transitive. Then any two isotropy Hopf algebras are weakly equivalent.
\end{proposition}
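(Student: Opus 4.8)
The plan is to deduce the statement directly from Theorem \ref{thm:A} by applying its weak-equivalence characterization at the two chosen points. Fix two objects $x,y \in A(\Bbbk)$ and recall that the isotropy Hopf algebras $(\Bbbk_{\Sscript{x}},\cH_{\Sscript{x}})$ and $(\Bbbk_{\Sscript{y}},\cH_{\Sscript{y}})$ are precisely the base change Hopf algebroids of $(A,\cH)$ along the extensions $x:A\to\Bbbk_{\Sscript{x}}$ and $y:A\to\Bbbk_{\Sscript{y}}$, in the sense of Example \ref{exam:Base change}; that is, $\cH_{\Sscript{x}}=\cH_{\Sscript{\phi}}$ for $\phi=x$, and similarly $\cH_{\Sscript{y}}=\cH_{\Sscript{\phi}}$ for $\phi=y$.

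First I would invoke geometric transitivity: since $(A,\cH)$ satisfies condition $(iv)$ of Theorem \ref{thm:A}, it also satisfies the equivalent condition $(v)$ recorded right after that theorem (by \cite[Proposition 4.1]{Kaoutit/Kowalzig:14}), namely that the base change morphism $\B{\phi}:(A,\cH)\to(B,\cH_{\Sscript{\phi}})$ is a weak equivalence for every extension $\phi:A\to B$. Taking $\phi=x$ and $\phi=y$, I obtain that the canonical morphisms
$$
\Sf{x}:(A,\cH)\to(\Bbbk_{\Sscript{x}},\cH_{\Sscript{x}}),\qquad \Sf{y}:(A,\cH)\to(\Bbbk_{\Sscript{y}},\cH_{\Sscript{y}})
$$
are both weak equivalences, which yields a span of weak equivalences with apex $(A,\cH)$ and legs the two isotropy Hopf algebras.

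It then remains to turn this span into the datum required by the definition of weakly equivalent Hopf algebroids in subsection \ref{ssec:W}. The key point is that a single weak equivalence $\B{\phi}:(A,\cH)\to(B,\cH_{\Sscript{\phi}})$ already witnesses that $(A,\cH)$ and $(B,\cH_{\Sscript{\phi}})$ are weakly equivalent, since one may complete it by the identity morphism of the target into the defining cospan. Hence both $(\Bbbk_{\Sscript{x}},\cH_{\Sscript{x}})$ and $(\Bbbk_{\Sscript{y}},\cH_{\Sscript{y}})$ are weakly equivalent to $(A,\cH)$, and transitivity of the relation \emph{being weakly equivalent} --- established for flat Hopf algebroids in \cite{HovStr:CALEHT} (see also \cite{Kaoutit/Kowalzig:14}) --- gives that the two isotropy Hopf algebras are weakly equivalent to one another, as claimed. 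I expect this last bookkeeping, namely the passage from the span produced by Theorem \ref{thm:A} to the cospan appearing in the definition (equivalently, the transitivity of the weak-equivalence relation), to be the only genuine obstacle; the remainder is an immediate application of the main theorem.
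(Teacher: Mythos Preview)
Your proof is correct and follows essentially the same route as the paper: both arguments apply Theorem \ref{thm:A} to obtain the span of weak equivalences $\Sf{x}:(A,\cH)\to(\Bbbk_{\Sscript{x}},\cH_{\Sscript{x}})$ and $\Sf{y}:(A,\cH)\to(\Bbbk_{\Sscript{y}},\cH_{\Sscript{y}})$, and then invoke \cite{Kaoutit/Kowalzig:14} to pass to the required cospan. The only cosmetic difference is that the paper phrases the last step as ``Morita equivalent, hence weakly equivalent by \cite[Theorem A]{Kaoutit/Kowalzig:14}'', whereas you phrase it as transitivity of the weak-equivalence relation---which is itself a consequence of that same Morita characterization.
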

\begin{proof}
Take two objects $x, y \in A(\Bbbk)$ and consider as before the following diagram
\begin{equation}\label{Eq:triangle}
\xymatrix@R=7pt{ (\Bbbk_{\Sscript{x}},\cH_{\Sscript{x}}) & & (\Bbbk_{\Sscript{y}},\cH_{\Sscript{y}}) \\ & \ar@{->}^-{\Sf{x}}[lu] (A,\cH) \ar@{->}_-{\Sf{y}}[ru] &}
\end{equation}
of Hopf algebroids. By Theorem \ref{thm:A}, both $\Sf{x}$ and $\Sf{y}$ are weak equivalences, in particular, the Hopf algebras $(\Bbbk_{\Sscript{x}},\cH_{\Sscript{x}})$ and $(\Bbbk_{\Sscript{y}},\cH_{\Sscript{y}})$ are Morita equivalent, in the sense that their categories of comodules are equivalent as symmetric monoidal $\Bbbk$-linear categories. Therefore, $(\Bbbk_{\Sscript{x}},\cH_{\Sscript{x}})$ and $(\Bbbk_{\Sscript{y}},\cH_{\Sscript{y}})$ are weakly equivalent by applying \cite[Theorem A]{Kaoutit/Kowalzig:14}.
\end{proof}

\begin{remark}\label{rem:Schu}
In the terminology of \cite[Definition 3.2.3]{Schau:HGABGE}, the Hopf algebras $(\Bbbk_{\Sscript{x}},\cH_{\Sscript{x}})$ and $(\Bbbk_{\Sscript{y}},\cH_{\Sscript{y}})$ are said to be \emph{monoidally Morita-Takeuchi} equivalent. By applying \cite[Corollary 3.2.3]{Schau:HGABGE}, there is a Hopf bi-Galois object, or a principal bi-bundle as in subsection \ref{ssec:W}, connecting $\cH_{\Sscript{x}}$ and $\cH_{\Sscript{y}}$ (notice here that the side on comodules  is not relevant since the Hopf algebras are commutative). 
\end{remark}

Next, we compute explicitly, by using results from \cite{Kaoutit/Kowalzig:14}, the principal bi-bundle connecting $(\Bbbk_{\Sscript{x}},\cH_{\Sscript{x}})$ and $(\Bbbk_{\Sscript{y}},\cH_{\Sscript{y}})$, as was mentioned in the previous Remark.  Following \cite{Kaoutit/Kowalzig:14}, any two weakly equivalent flat Hopf algebroids are connected by a two-stage zig-zag of weak equivalences, and this is the case for the previous Hopf algebras. That is, in the situation of Proposition \ref{prop:weak-isotropy},  diagram \eqref{Eq:triangle} can be completed to a square by considering the two-sided translation Hopf algebroid built up by  using the principal bibundle connecting $(\Bbbk_{\Sscript{x}},\cH_{\Sscript{x}})$ and $(\Bbbk_{\Sscript{y}},\cH_{\Sscript{y}})$, see subsection \ref{ssec:W}. In more specific way,   we have the two trivial principal bibundles $P_{\Sscript{x}}:=\cH\tensor{A}\Bbbk_{\Sscript{x}}$ and $P_{\Sscript{y}}:=\cH\tensor{A}\Bbbk_{\Sscript{y}}$ which correspond, respectively,  to the weak equivalences $\Sf{x}$ and $\Sf{y}$. Notice that $P_{\Sscript {x}}$ is an $(\cH,\cH_{\Sscript{x}})$-bicomodule algebra with algebra maps 
\begin{equation}\label{Eq:alphaBetax}
\alpha_{\Sscript{x}}: A \to P_{\Sscript{x}},\quad \big(a \mapsto \Sf{s}(a)\tensor{A}1\big)\; \text{ and }\;\beta_{\Sscript{x}}: \Bbbk_{\Sscript{x}} \to P_{\Sscript{x}}, \quad\big(k \mapsto 1_{\Sscript{\cH}}\tensor{A}k1_{\Sscript{x}}\big).
\end{equation}
Similar notations are applied to  the 
$(\cH,\cH_{\Sscript{y}})$-bicomodule algebra $P_{\Sscript {y}}$.
The cotensor product of these two bibundles $P_{\Sscript{x}}{}^{\Sscript{co}}\cotensor{\cH}P_{\Sscript{y}}$ is again a principal $(\cH_{\Sscript{x}},\cH_{\Sscript{y}})$-bibundle (recall here that $P_{\Sscript{x}}{}^{\Sscript{co}}$ is the opposite bundle of $P_{\Sscript{x}}$). The algebra maps defining this structure are $\td{\beta_{\Sscript{x}}}: \Bbbk_{\Sscript{x}} \longrightarrow  P_{\Sscript{x}}{}^{\Sscript{co}}\cotensor{\cH}P_{\Sscript{y}} \longleftarrow \Bbbk_{\Sscript{y}} : \td{\beta_{\Sscript{y}}}$, given by 
$$
\td{\beta_{\Sscript{x}}}(k) \,=\, \beta_{\Sscript{x}}(k ) \cotensor{\cH} 1_{\Sscript{P_{\Sscript{y}}}} ,\qquad  \td{\beta_{\Sscript{y}}}(k) \,=\,  1_{\Sscript{P_{\Sscript{x}}}}  \cotensor{\cH} \beta_{\Sscript{y}}(k ),
$$
where the notation is the obvious one. 
The associated two-sided translation Hopf algebroid is described as follows. 
First we observe the following general fact in Hopf algebroids with source equal to the target, i.e., Hopf algebras over commutative algebras.
\begin{lemma}\label{lema:Comopesaesto}
Let $(R,L)$ and $(R',L')$ be two commutative Hopf algebras, and assume that there is  a diagram of Hopf algebroids:
$$
\xymatrix@R=8pt{ (R,L) & & (R',L')  \\ & \ar@{->}^-{\omega}[lu] (A,\cH) \ar@{->}_-{\omega'}[ru] &}
$$
Then the pair $\big(R\tensor{A}\cH\tensor{A}R',L\tensor{A}\cH\tensor{A}L'\big)$ of algebras, admits a structure of Hopf algebroid with maps:
\begin{enumerate}[$\bullet$]
\item the source and target:
$$
\Sf{s}(r\tensor{A}u\tensor{A}r')\,:=\, r1_{\scriptscriptstyle{L}} \tensor{A}u\tensor{A}r'1_{\scriptscriptstyle{L'}}, \quad \Sf{t}(r\tensor{A}u\tensor{A}r')\,:=\, r\omega(\mathscr{S}(u_{\Sscript{(1)}})) \tensor{A}u_{\Sscript{(2)}}\tensor{A}\omega'(u_{\Sscript{(3)}})r'; 
$$
\item comultiplication and counit:
$$
\Delta(l\tensor{A}u\tensor{A}l') :=  \big( l_{\Sscript{(1)}}\tensor{A}u\tensor{A}l'_{\Sscript{(1)}}\big)\tensor{C}\big( l_{\Sscript{(2)}}\tensor{A}1_{\scriptscriptstyle{\cH}}\tensor{A}l'_{\Sscript{(2)}}\big) ,\;\, \varepsilon(l\tensor{A}u\tensor{A}l'):=  \varepsilon_{\Sscript{L}}(l)\tensor{A}u\tensor{A}  \varepsilon_{\Sscript{L'}}(l');
$$
\item the antipode:
$$
\mathscr{S}(l\tensor{A}u\tensor{A}l')\,:=\, \mathcal{S}_{\Sscript{L}}\big(l\,\omega(u_{\Sscript{(1)}})\big) \tensor{A}u_{\Sscript{(2)}}\tensor{A}\omega'(u_{\Sscript{(3)}})
\mathcal{S}_{\Sscript{L'}}(l').
$$
\end{enumerate}
\end{lemma}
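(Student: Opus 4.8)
The plan is to recognize the asserted structure as an instance of the two-sided translation Hopf algebroid of subsection \ref{ssec:W}, applied to a suitable bicomodule algebra over the two Hopf algebras $(R,L)$ and $(R',L')$. Concretely, I would take $P:=R\tensor{A}\cH\tensor{A}R'$ as the base algebra and show it carries a natural structure of $(L,L')$-bicomodule algebra; the general construction then produces a Hopf algebroid whose base is $P$ and whose total algebra is $L_{\Sscript{\Sf{s}}}\tensor{R}P\tensor{R'}{}_{\Sscript{\Sf{s}}}L'$. The canonical identification $L_{\Sscript{\Sf{s}}}\tensor{R}\big(R\tensor{A}\cH\tensor{A}R'\big)\tensor{R'}{}_{\Sscript{\Sf{s}}}L'\cong L\tensor{A}\cH\tensor{A}L'$ (collapsing the outer $R$- and $R'$-factors) then matches the pair of algebras in the statement, and the diagram \eqref{Eq:triangle} becomes the one displayed in the lemma. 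Note that no flatness of $(A,\cH)$ is needed, since the two-sided translation construction is purely algebraic.

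First I would equip $P$ with the left $L$-coaction
$$
\varrho_{\Sscript{l}}: P \longrightarrow L\tensor{R}P, \quad r\tensor{A}u\tensor{A}r' \longmapsto \Sf{s}(r)\,\omega_{\Sscript{1}}(u_{\Sscript{(1)}})\tensor{R}\big(1_{\Sscript{R}}\tensor{A}u_{\Sscript{(2)}}\tensor{A}r'\big),
$$
and the right $L'$-coaction
$$
\varrho_{\Sscript{r}}: P \longrightarrow P\tensor{R'}L', \quad r\tensor{A}u\tensor{A}r' \longmapsto \big(r\tensor{A}u_{\Sscript{(1)}}\tensor{A}1_{\Sscript{R'}}\big)\tensor{R'}\omega'_{\Sscript{1}}(u_{\Sscript{(2)}})\,\Sf{s}(r'),
$$
where $\Sf{s}$ denotes the structure map $R\to L$ (respectively $R'\to L'$), obtained from the comultiplication of $\cH$ pushed forward along $\omega_{\Sscript{1}}$ and $\omega'_{\Sscript{1}}$. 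The verification that these descend to the $A$-balanced tensor factors uses exactly the morphism relations $\omega_{\Sscript{1}}\circ\Sf{s}=\Sf{s}\circ\omega_{\Sscript{0}}$, $\omega_{\Sscript{1}}\circ\Sf{t}=\Sf{t}\circ\omega_{\Sscript{0}}$ together with $\Delta\circ\Sf{s}=(\Sf{s}\tensor{A}1_{\Sscript{\cH}})$ and $\Delta\circ\Sf{t}=(1_{\Sscript{\cH}}\tensor{A}\Sf{t})$, and their primed analogues. Coassociativity and counitality of $\varrho_{\Sscript{l}},\varrho_{\Sscript{r}}$ follow from those of $\Delta$, the two coactions commute by coassociativity of $\Delta$, and each is an algebra map because $\Delta$, $\omega_{\Sscript{1}}$, $\omega'_{\Sscript{1}}$ are; this makes $P$ an $(L,L')$-bicomodule algebra with evident algebra maps $R\to P\leftarrow R'$.

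With this in hand I would invoke the two-sided translation Hopf algebroid $(P,L\lJoin P\rJoin L')$ from subsection \ref{ssec:W}, which is a Hopf algebroid by construction, and specialize its structure maps. Source, counit, and comultiplication transcribe immediately. For the target and antipode one substitutes the combined coaction $p\mapsto p_{\Sscript{(-1)}}\tensor{R}p_{\Sscript{(0)}}\tensor{R'}p_{\Sscript{(1)}}$ with $p_{\Sscript{(-1)}}=\Sf{s}(r)\omega_{\Sscript{1}}(u_{\Sscript{(1)}})$, $p_{\Sscript{(0)}}=1_{\Sscript{R}}\tensor{A}u_{\Sscript{(2)}}\tensor{A}1_{\Sscript{R'}}$, $p_{\Sscript{(1)}}=\omega'_{\Sscript{1}}(u_{\Sscript{(3)}})\Sf{s}(r')$ into the general formulas $\Sf{t}(p)=\mathscr{S}(p_{\Sscript{(-1)}})\tensor{A}p_{\Sscript{(0)}}\tensor{B}p_{\Sscript{(1)}}$ and $\mathscr{S}(u\tensor{A}p\tensor{B}w)=\mathscr{S}(u\,p_{\Sscript{(-1)}})\tensor{A}p_{\Sscript{(0)}}\tensor{B}p_{\Sscript{(1)}}\mathscr{S}(w)$. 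Here one uses that $(R,L)$ and $(R',L')$ are \emph{Hopf algebras}, so $\Sf{s}=\Sf{t}$ on $L$ and $L'$ and hence $\mathscr{S}\circ\Sf{s}=\Sf{t}=\Sf{s}$ (a consequence of \eqref{Eq:antipode} applied to $\Sf{t}(r)$), together with $\mathscr{S}\circ\omega_{\Sscript{1}}=\omega_{\Sscript{1}}\circ\mathscr{S}$, to recover precisely the expressions stated in the lemma, read with the iterated comultiplication $\Delta^{(2)}(u)=u_{\Sscript{(1)}}\tensor{A}u_{\Sscript{(2)}}\tensor{A}u_{\Sscript{(3)}}$. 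The only genuine obstacle is bookkeeping rather than conceptual: checking that $\varrho_{\Sscript{l}},\varrho_{\Sscript{r}}$ are well defined over the $A$-balanced tensors (where the morphism hypothesis on $\omega,\omega'$ enters) and tracking the identification $L\tensor{A}\cH\tensor{A}L'\cong L_{\Sscript{\Sf{s}}}\tensor{R}P\tensor{R'}{}_{\Sscript{\Sf{s}}}L'$ when transporting the maps; the simplification $\mathscr{S}(\Sf{s}(r))=\Sf{s}(r)$, available only because $L,L'$ have source equal to target, is exactly what turns the generic two-sided translation formulas into the simplified shape asserted here. Alternatively, at the cost of a longer direct computation, one could bypass the general construction and verify the $P$-coring axioms and the antipode identities \eqref{Eq:antipode} for the stated maps by hand.
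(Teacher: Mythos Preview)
Your approach is correct, and it is more conceptual than the paper's own treatment: the paper's proof of this lemma is literally the sentence ``These are routine computations,'' i.e.\ a direct verification of the Hopf algebroid axioms for the displayed maps. You instead recognize the pair $(R\tensor{A}\cH\tensor{A}R',\,L\tensor{A}\cH\tensor{A}L')$ as the two-sided translation Hopf algebroid $L\lJoin P\rJoin L'$ attached to the $(L,L')$-bicomodule algebra $P=R\tensor{A}\cH\tensor{A}R'$, and then read off the structure maps from the general formulas in subsection~\ref{ssec:W}. This is exactly the identification the paper itself records \emph{after} the lemma, in the proof of Proposition~\ref{prop:tst}, where it says the isomorphism ``follows directly by comparing the structure of the two-sided Hopf algebroid \ldots\ with that of $(P_{\Sscript{x,\,y}},\cH_{\Sscript{x,\,y}})$ given in Lemma~\ref{lema:Comopesaesto}.'' So you have simply reversed the order: rather than verifying the axioms directly and then observing the match with the translation construction, you use the (already justified) translation construction to obtain the axioms for free. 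Either route is fine; yours avoids the bookkeeping of a direct check at the cost of verifying that $P$ really is an $(L,L')$-bicomodule algebra, which you outline adequately. Note incidentally that the target formula in the lemma has a visible indexing typo (two occurrences of $u_{\Sscript{(1)}}$ flanking $u_{\Sscript{(0)}}$); your derivation via the iterated comultiplication $u_{\Sscript{(1)}}\tensor{A}u_{\Sscript{(2)}}\tensor{A}u_{\Sscript{(3)}}$ gives the intended expression.
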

\begin{proof}
These are routine computations.
\end{proof}

Now we come back to the situation of Proposition \ref{prop:weak-isotropy}. Consider the following algebras:
$$P_{\Sscript{x,\,y}} := \Bbbk_{\Sscript{x}}\tensor{A} \cH \tensor{A}\Bbbk_{\Sscript{y}},\qquad \cH_{\Sscript{x,\, y}}:= \cH_{\Sscript{x}}\tensor{A}\cH\tensor{A}\cH_{\Sscript{y}},$$
with the structure of Hopf algebroid, as in Lemma \ref{lema:Comopesaesto}. 
Consider then the following obvious algebra maps
$$
\omega_{\Sscript{x}}:  \cH_{\Sscript{x}} \longrightarrow \cH_{\Sscript{x,\, y}},\,\, \Big( k1_{\Sscript{x}}\tensor{A}u \tensor{A}k' 1_{\Sscript{x}} \longmapsto k1_{\Sscript{\cH_{\Sscript{x}}}}\tensor{A}u \tensor{A}k' 1_{\Sscript{\cH_{\Sscript{y}}}} \Big);   
$$
and 
$$
\omega_{\Sscript{y}}:\cH_{\Sscript{y}} \longrightarrow \cH_{\Sscript{x,y}},\,\, \Big(k1_{\Sscript{y}}\tensor{A}u \tensor{A}k' 1_{\Sscript{y}} \longmapsto k1_{\Sscript{\cH_{\Sscript{x}}}} \tensor{A}u \tensor{A}k' 1_{\Sscript{\cH_{\Sscript{y}}}}  \Big).
$$ 
\begin{proposition}\label{prop:tst}
Let $(A,\cH)$ be as in Proposition \ref{prop:weak-isotropy},  consider $x, y \in A(\Bbbk)$ and their associated isotropy Hopf algebras $(\Bbbk_{\Sscript{x}},\cH_{\Sscript{x}})$ and $(\Bbbk_{\Sscript{y}},\cH_{\Sscript{y}})$. Assume that $(A,\cH)$ is geometrically transitive.  Then there is an isomorphism 
$$
\Big(P_{\Sscript{x}}{}^{\Sscript{co}}\cotensor{\cH}P_{\Sscript{y}}, \, \cH_{\Sscript{x}}\lJoin (P_{\Sscript{x}}{}^{\Sscript{co}}\cotensor{\cH}P_{\Sscript{y}}) \rJoin \cH_{\Sscript{y}} \Big)\,\,\cong\,\, \big(P_{\Sscript{x,\,y}}, \cH_{\Sscript{x,\, y}}\big)
$$ 
of Hopf algebroids with the following  diagram $$
\xymatrix@R=7pt{  & (P_{\Sscript{x,\,y}}, \cH_{\Sscript{x,\, y}}) &  \\ (\Bbbk_{\Sscript{x}},\cH_{\Sscript{x}}) \ar@{->}^-{{\bf \omega}_{\Sscript{x}}}[ru] & & (\Bbbk_{\Sscript{y}},\cH_{\Sscript{y}}) \ar@{->}_-{{\bf \omega}_{\Sscript{y}}}[lu] \\ & \ar@{->}^-{\Sf{x}}[lu] (A,\cH) \ar@{->}_-{\Sf{y}}[ru] &}
$$
of weak equivalences. 
\end{proposition}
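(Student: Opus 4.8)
The plan is to reduce the statement to two computations: first, an explicit description of the bibundle $P_{\Sscript{x}}{}^{\Sscript{co}}\cotensor{\cH}P_{\Sscript{y}}$, and second, an identification of its associated two-sided translation Hopf algebroid with the pair $(P_{\Sscript{x,\,y}}, \cH_{\Sscript{x,\, y}})$ built in Lemma \ref{lema:Comopesaesto}. To begin, I would invoke geometric transitivity: by Theorem \ref{thm:A} both base change morphisms $\Sf{x}$ and $\Sf{y}$ are weak equivalences, so the trivial bundles $P_{\Sscript{x}}=\cH\tensor{A}\Bbbk_{\Sscript{x}}$ and $P_{\Sscript{y}}=\cH\tensor{A}\Bbbk_{\Sscript{y}}$ are principal bibundles and hence their cotensor product $P_{\Sscript{x}}{}^{\Sscript{co}}\cotensor{\cH}P_{\Sscript{y}}$ is a principal $(\cH_{\Sscript{x}},\cH_{\Sscript{y}})$-bibundle. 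Using the faithful flatness of $\cH$ over $A\tensor{}A$ to control the equalizer defining the cotensor, together with the counit isomorphism $\cH\cotensor{\cH}\cH\cong \cH$, I would establish a natural isomorphism of $(\cH_{\Sscript{x}},\cH_{\Sscript{y}})$-bicomodule algebras
$$
P_{\Sscript{x}}{}^{\Sscript{co}}\cotensor{\cH}P_{\Sscript{y}} \;\cong\; \Bbbk_{\Sscript{x}}\tensor{A}\cH\tensor{A}\Bbbk_{\Sscript{y}} \;=\; P_{\Sscript{x,\,y}},
$$
compatibly with the structure maps $\td{\beta_{\Sscript{x}}},\td{\beta_{\Sscript{y}}}$ described before the statement.

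Next I would feed this isomorphism into the two-sided translation construction of subsection \ref{ssec:W}. By definition, the total algebra of the two-sided translation Hopf algebroid of the $(\cH_{\Sscript{x}},\cH_{\Sscript{y}})$-bibundle $P_{\Sscript{x,\,y}}$ is $\cH_{\Sscript{x}}\tensor{\Bbbk_{\Sscript{x}}}P_{\Sscript{x,\,y}}\tensor{\Bbbk_{\Sscript{y}}}\cH_{\Sscript{y}}$ (the outer tensors taken via the source maps), and the elementary identities $\cH_{\Sscript{x}}\tensor{\Bbbk_{\Sscript{x}}}\Bbbk_{\Sscript{x}}\cong\cH_{\Sscript{x}}$ and $\Bbbk_{\Sscript{y}}\tensor{\Bbbk_{\Sscript{y}}}\cH_{\Sscript{y}}\cong\cH_{\Sscript{y}}$ collapse this to $\cH_{\Sscript{x}}\tensor{A}\cH\tensor{A}\cH_{\Sscript{y}}=\cH_{\Sscript{x,\, y}}$. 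It then remains to check that, under this identification, the source, target, comultiplication, counit and antipode of the two-sided translation Hopf algebroid coincide with the explicit formulas of Lemma \ref{lema:Comopesaesto} (taken with $(R,L)=(\Bbbk_{\Sscript{x}},\cH_{\Sscript{x}})$, $(R',L')=(\Bbbk_{\Sscript{y}},\cH_{\Sscript{y}})$, $\omega=\Sf{x}$ and $\omega'=\Sf{y}$). This is a direct transcription: the formulas for $\Sf{s},\Sf{t},\mathscr{S}$ in subsection \ref{ssec:W} involve the antipode of $\cH$ acting on the $\cH$-slot, which under the collapse above produces exactly the terms $\omega(\mathscr{S}(u_{\Sscript{(1)}}))$ and $\mathcal{S}_{\Sscript{L}}\big(l\,\omega(u_{\Sscript{(1)}})\big)$ recorded in Lemma \ref{lema:Comopesaesto}, while $\Delta$ and $\varepsilon$ transcribe verbatim. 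Functoriality of the two-sided translation construction then upgrades the bicomodule algebra isomorphism of the first step to the asserted isomorphism of Hopf algebroids.

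Finally, for the diagram of weak equivalences: the morphisms $\Sf{x}$ and $\Sf{y}$ are weak equivalences by Theorem \ref{thm:A}, as noted, while $\omega_{\Sscript{x}}$ and $\omega_{\Sscript{y}}$ are precisely the canonical morphisms of the two-sided translation triangle \eqref{Eq:triangle} attached to the principal bibundle $P_{\Sscript{x}}{}^{\Sscript{co}}\cotensor{\cH}P_{\Sscript{y}}$; by the Hopf-algebroid analogue of Proposition \ref{prop:pb} recalled in Remark \ref{remark:serastu}, these are weak equivalences since the underlying bibundle is principal, and commutativity of the lower triangle is immediate from the explicit forms of $\omega_{\Sscript{x}},\omega_{\Sscript{y}},\Sf{x},\Sf{y}$. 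I expect the genuine work to lie in the first step: correctly pinning down the left and right $\cH$-coactions on $P_{\Sscript{x}}{}^{\Sscript{co}}$ and $P_{\Sscript{y}}$ --- in particular the role of the antipode in the opposite bundle $P_{\Sscript{x}}{}^{\Sscript{co}}$ --- so that the equalizer computing the cotensor collapses cleanly onto $\Bbbk_{\Sscript{x}}\tensor{A}\cH\tensor{A}\Bbbk_{\Sscript{y}}$. Once that identification is fixed, the bookkeeping matching the translation Hopf algebroid's structure maps against Lemma \ref{lema:Comopesaesto} is routine.
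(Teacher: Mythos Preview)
Your approach is essentially the same as the paper's: identify $P_{\Sscript{x}}{}^{\Sscript{co}}\cotensor{\cH}P_{\Sscript{y}}$ with $P_{\Sscript{x,\,y}}$, then compare the two-sided translation structure of subsection~\ref{ssec:W} against the explicit formulas of Lemma~\ref{lema:Comopesaesto}, and finally invoke the result from \cite{Kaoutit/Kowalzig:14} (which you access through Remark~\ref{remark:serastu}) to conclude that $\omega_{\Sscript{x}},\omega_{\Sscript{y}}$ are weak equivalences. The paper's proof is terser but follows the same line, citing \cite[Proposition 5.3]{Kaoutit/Kowalzig:14} directly for the last step.

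One small overreach: your final sentence asserts that ``commutativity of the lower triangle is immediate from the explicit forms''. The proposition does not claim the diagram commutes, and in fact it does \emph{not} commute strictly---only up to a $2$-isomorphism, as the paper records in Remark~\ref{rem:w} immediately following the proof. So drop that clause; the statement only asks that all four arrows be weak equivalences.
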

\begin{proof}
The stated isomorphism follows directly by comparing the structure of the two-sided translation Hopf algebroid, as given in subsection \ref{ssec:W}, with that of $(P_{\Sscript{x,\, y}}, \cH_{\Sscript{x,\, y}})$ given in Lemma \ref{lema:Comopesaesto}. By Proposition \ref{prop:weak-isotropy},  we know that $\Sf{x}$ and $\Sf{y}$ are weak equivalences. Therefore,  ${\bf \omega}_{\Sscript {x}}$ and ${\bf \omega}_{\Sscript {y}}$ are weak equivalences by applying \cite[Proposition 6.3]{Kaoutit/Kowalzig:14} in conjunction with the previous isomorphism of Hopf algebroids.  
\end{proof}

\begin{remark}\label{rem:w}
The diagram stated in Proposition \ref{prop:tst}, is not necessarily strictly  commutative; however, it is commutative  up to a $2$-isomorphism in the $2$-category of  flat Hopf algebroids described in subsection \ref{ssec:H}. Precisely, one shows by applying \cite[Lemma 6.11]{Kaoutit/Kowalzig:14}  that there is a $2$-isomorphism ${\bf \omega}_{\Sscript {x}} \circ \Sf{x}\,\cong \, {\bf \omega}_{\Sscript {x}} \circ \Sf{y}$.
\end{remark}

\subsection{The transitivity of the character groupoid}\label{ssec:CH}
Let $(A,\cH)$ be a flat  Hopf algebroid  as in the previous subsection and consider its character groupoid $\mathscr{H}(\Bbbk)=(\cH(\Bbbk),A(\Bbbk))$.  We have seen in Theorem \ref{thm:A} that $(A,\cH)$ is geometrically transitive if and only if the attached presheaf of groupoids $\mathscr{H}$ is locally transitive, that is, satisfies condition $(ii)$ of that theorem. The aim of this subsection is to characterize the transitivity of the  groupoid $\mathscr{H}(\Bbbk)$, by means of  the conjugation between the isotropy Hopf algebras. First we introduce the notion of conjugacy. 

\begin{definition}
Let $x, y$ be two objects in $\mathscr{H}(\Bbbk)$. We say that \emph{the isotropy Hopf algebras $(\Bbbk_{\Sscript{x}},\cH_{\Sscript{x}})$ and $(\Bbbk_{\Sscript{y}},\cH_{\Sscript{y}})$ are conjugated}, provided there is an isomorphism of Hopf algebras $\Sf{g}:(\Bbbk_{\Sscript{x}},\cH_{\Sscript{x}}) \to (\Bbbk_{\Sscript{y}},\cH_{\Sscript{y}})$ such that the following diagram 
$$
\xymatrix@R=8pt{ (\Bbbk_{\Sscript{x}},\cH_{\Sscript{x}}) \ar@{->}^-{\Sf{g}}[rr] & & (\Bbbk_{\Sscript{y}},\cH_{\Sscript{y}}) \\ & \ar@{->}^-{\Sf{x}}[lu] (A,\cH) \ar@{->}_-{\Sf{y}}[ru] &}
$$
is commutative up to a $2$-isomorphism, where Hopf $\Bbbk$-algebras are considered as 0-cells in the 2-category of flat Hopf algebroids  described in subsection  \ref{ssec:H}.
\end{definition}
As in \cite[\S 6.4]{Kaoutit/Kowalzig:14}, this means that there is an algebra map $g: \cH\to \Bbbk$ such that  
\begin{equation}\label{Eq:Z}
g \circ \Sf{s}= x,\quad g \circ \Sf{t}=y, \; \text{ and }\;\; u_{\Sscript{(1)}}{}^{\Sscript{-}}\tensor{A} u_{\Sscript{(1)}}{}^{\Sscript{0}}\tensor{A} u_{\Sscript{(1)}}{}^{\Sscript{+}} g(u_{\Sscript{(2)}})\,=\, g(u_{\Sscript{(1)}})\tensor{A}u_{\Sscript{(2)}}\tensor{A}1_{\Sscript{y}}\,\in \cH_{\Sscript{y}}
\end{equation}
where, by denoting  the Hopf algebroids map $\Sf{z}:=\Sf{g} \circ \Sf{x}: (A,\cH) \to (\Bbbk_{\Sscript{x}}, \cH_{\Sscript{x}})$, we have 
$$
\Sf{z}_{\Sscript{0}}=x \quad \text{ and } \quad \Sf{z}_{\Sscript{1}}(u)=\Sf{g}(1_{\Sscript{x}} \tensor{A}u \tensor{A}1_{\Sscript{x}}):= u^{\Sscript{-}}\tensor{A}u^{\Sscript{0}} \tensor{A }u^{\Sscript{+}} \;(\text{summation understood}).
$$

\begin{proposition}\label{prop:conjugation}
Let $(A,\cH)$ be a flat Hopf algebroid with $A\neq 0$ and $A(\Bbbk) \neq \emptyset$. Assume that $(A,\cH)$ is  geometrically transitive. Then the following are equivalent: 
\begin{enumerate}[(i)]
\item the character groupoid $\mathscr{H}(\Bbbk)$ is transitive;
\item for any two objects $x, y$ in $\mathscr{H}(\Bbbk)$, the algebras  $\cH\tensor{A}\Bbbk_{\Sscript{x}}$ and $\cH\tensor{A}\Bbbk_{\Sscript{y}}$ are isomorphic as left $\cH$-comodules algebras;
\item any two isotropy Hopf algebras are conjugated.
\end{enumerate}
Furthermore, under the same assumptions, condition $(i)$ is always fulfilled. 
\end{proposition}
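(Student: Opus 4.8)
The plan is to prove the two equivalences $(i)\Leftrightarrow(ii)$ and $(i)\Leftrightarrow(iii)$ separately, in each case transferring a statement about the abstract fibre groupoid $\mathscr{H}(\Bbbk)$ to the algebraic side via the comodule-algebra/presheaf-action dictionary of subsection \ref{ssec:C} and via Lemma \ref{lem:iso}. Throughout I identify the left-star presheaf $\pP_{\Sscript{x}}$, where $\pP_{\Sscript{x}}(C)=\{g\in\cH(C):\Sf{t}^{\Sscript{*}}(g)=x^{\Sscript{*}}(1_{\Sscript{C}})\}$, with the presheaf represented by the left $\cH$-comodule algebra $(P_{\Sscript{x}},\alpha_{\Sscript{x}})$ of \eqref{Eq:alphaBetax}; the left $\hH$-action is groupoid composition $h\cdot g=gh$ (defined when $\Sf{t}^{\Sscript{*}}(h)=\Sf{s}^{\Sscript{*}}(g)$), with moment map $\vartheta=\Sf{s}^{\Sscript{*}}$. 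The key preliminary observation is that this moment map corresponds to $\alpha_{\Sscript{x}}$ and in fact gives the left $A$-module structure underlying $P_{\Sscript{x}}$ as a comodule; consequently every morphism of left $\cH$-comodule algebras $P_{\Sscript{x}}\to P_{\Sscript{y}}$ is automatically $A$-linear and unital, hence intertwines $\alpha_{\Sscript{x}}$ and $\alpha_{\Sscript{y}}$, i.e.\ preserves moment maps.

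For $(i)\Rightarrow(ii)$ I would fix $x,y\in A(\Bbbk)$ and, by transitivity, choose $g\in\cH(\Bbbk)$ with $\Sf{s}^{\Sscript{*}}(g)=x$ and $\Sf{t}^{\Sscript{*}}(g)=y$. Setting $g_{\Sscript{C}}:=g^{\Sscript{*}}(1_{\Sscript{C}})$, an arrow $x^{\Sscript{*}}(1_{\Sscript{C}})\to y^{\Sscript{*}}(1_{\Sscript{C}})$ in $\hH(C)$, the post-composition $\Phi_{\Sscript{C}}\colon\pP_{\Sscript{x}}(C)\to\pP_{\Sscript{y}}(C)$, $f\mapsto g_{\Sscript{C}}f$, is natural in $C$, preserves $\vartheta$ (since $\Sf{s}^{\Sscript{*}}(g_{\Sscript{C}}f)=\Sf{s}^{\Sscript{*}}(f)$), is left $\hH$-equivariant (post-composition commutes with the source-changing left action), and has inverse $f'\mapsto g_{\Sscript{C}}^{-1}f'$. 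By Yoneda $\Phi$ corresponds to an isomorphism $P_{\Sscript{x}}\cong P_{\Sscript{y}}$ of left $\cH$-comodule algebras. For the converse $(ii)\Rightarrow(i)$, an isomorphism $\psi\colon P_{\Sscript{x}}\to P_{\Sscript{y}}$ yields a natural isomorphism of left $\hH$-sets $\pP_{\Sscript{x}}\cong\pP_{\Sscript{y}}$ preserving $\vartheta$; evaluating at $C=\Bbbk$ and applying it to the identity arrow $\iota(x)\in\pP_{\Sscript{x}}(\Bbbk)$ (whose source and target are both $x$) produces an element of $\pP_{\Sscript{y}}(\Bbbk)$ with target $y$ and, by moment-map preservation, source $x$, that is, an arrow $x\to y$. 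Hence $\mathscr{H}(\Bbbk)$ is transitive.

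For the second equivalence, $(iii)\Rightarrow(i)$ is immediate: by \eqref{Eq:Z} a conjugation supplies in particular an algebra map $g\colon\cH\to\Bbbk$ with $g\circ\Sf{s}=x$, $g\circ\Sf{t}=y$, i.e.\ an arrow $x\to y$. For $(i)\Rightarrow(iii)$ I would again select $g\in\cH(\Bbbk)$ from $x$ to $y$ and reproduce algebraically the conjugation isomorphism of isotropy groups described in subsection \ref{ssec:basic}: for each $C$ the map $c_{\Sscript{C}}\colon\mathscr{G}^{\Sscript{x}}(C)\to\mathscr{G}^{\Sscript{y}}(C)$, $h\mapsto g_{\Sscript{C}}\,h\,g_{\Sscript{C}}^{-1}$, is a group isomorphism natural in $C$, whence a natural isomorphism of presheaves of groups $\mathscr{G}^{\Sscript{x}}\cong\mathscr{G}^{\Sscript{y}}$. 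By Lemma \ref{lem:iso} these presheaves are represented by the isotropy Hopf algebras, so $c$ is induced by a Hopf algebra isomorphism $\Sf{g}\colon(\Bbbk_{\Sscript{x}},\cH_{\Sscript{x}})\to(\Bbbk_{\Sscript{y}},\cH_{\Sscript{y}})$. Finally one reads off the $2$-cell: the same $g\colon\cH\to\Bbbk$ is the candidate $2$-isomorphism realising $\Sf{g}\circ\Sf{x}\cong\Sf{y}$, and one checks that it satisfies the compatibility \eqref{Eq:Z}.

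The main obstacle I expect is exactly this last verification: converting the transparent groupoid-level conjugation $h\mapsto ghg^{-1}$ into the explicit Hopf-algebroid cocycle identity \eqref{Eq:Z}, and confirming that the induced $\Sf{g}$ is a genuine morphism of Hopf algebras whose defining triangle commutes up to the $2$-cell $g$. A secondary point demanding care is the claim, underlying $(ii)\Rightarrow(i)$, that a left $\cH$-comodule algebra isomorphism $P_{\Sscript{x}}\to P_{\Sscript{y}}$ must respect the $A$-algebra structures; this rests on the observation recorded above that the left $A$-action underlying $P_{\Sscript{x}}$ as a comodule is precisely the one given by $\alpha_{\Sscript{x}}$, so that comodule morphisms are automatically $A$-linear.
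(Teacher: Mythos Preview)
Your proposal is correct and takes a genuinely different route from the paper's proof. The paper argues cyclically $(i)\Rightarrow(ii)\Rightarrow(iii)\Rightarrow(i)$ and works entirely with explicit formulas: for $(i)\Rightarrow(ii)$ it writes down the isomorphism $F(u\tensor{A}k1_{\Sscript{x}})=u_{\Sscript{(1)}}\tensor{A}h(\mathscr{S}(u_{\Sscript{(2)}}))k1_{\Sscript{y}}$ directly, and for $(ii)\Rightarrow(iii)$ it extracts $g$ from a given $F$ by $g(u)=y(\varepsilon(u^{\Sscript{-}}))u^{\Sscript{+}}$ and then exhibits $\Sf{g}$ by the explicit conjugation formula $\Sf{g}(1_{\Sscript{x}}\tensor{A}u\tensor{A}1_{\Sscript{x}})=g(\mathscr{S}(u_{\Sscript{(1)}}))1_{\Sscript{y}}\tensor{A}u_{\Sscript{(2)}}\tensor{A}g(u_{\Sscript{(3)}})1_{\Sscript{y}}$. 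You instead run two independent equivalences through Yoneda, reading off the algebra maps from natural transformations of the representable presheaves $\pP_{\Sscript{x}}$ and $\mathscr{G}^{\Sscript{x}}$; this is more conceptual and makes the groupoid picture of subsection~\ref{ssec:basic} do the work.

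What each approach buys: the paper's explicit formulas make the Hopf-algebra verification of $\Sf{g}$ and of \eqref{Eq:Z} a mechanical check, at the cost of opacity; your Yoneda argument explains \emph{why} the formulas exist, but the step you flag---that the arrow $g$ really is the $2$-cell witnessing $\Sf{g}\circ\Sf{x}\cong\Sf{y}$---still has to be cashed out. That step is fine: dually, a $2$-cell in the Hopf-algebroid $2$-category is a natural transformation of the induced functors of presheaves of groupoids, and at each fibre $C$ the naturality square for $g_{\Sscript{C}}:x_{\Sscript{C}}\to y_{\Sscript{C}}$ reads $h'\cdot g_{\Sscript{C}}=g_{\Sscript{C}}\cdot(g_{\Sscript{C}}^{-1}h'g_{\Sscript{C}})$, which is tautological. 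Translating this back through the explicit dictionary of Lemma~\ref{lem:iso} reproduces precisely the identity \eqref{Eq:Z}. Your secondary worry about $(ii)\Rightarrow(i)$ is also unfounded for the reason you give: $\alpha_{\Sscript{x}}$ is the unit of the monoid $P_{\Sscript{x}}$ in left $\cH$-comodules, so any comodule-algebra morphism preserves it. Note finally that, as in the paper, the geometrically transitive hypothesis is not actually invoked anywhere in the argument.
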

\begin{proof}
We first check the equivalences between these conditions. So, let $x \in A(\Bbbk)=\hH_{\Sscript{0}}(\Bbbk)$ and denote as before by $P_{\Sscript{x}}:=\cH\tensor{A}\Bbbk_{\Sscript{x}}$ the stated left $\cH$-comodule algebra.

$(i) \Rightarrow (ii)$. Given $x, y \in A(\Bbbk)$, by assumption there is an algebra map $h:\cH \to \Bbbk$ such that $h \circ \Sf{s}= x$ and $h \circ \Sf{t}=y$. So we can define the following map 
$$
F: P_{\Sscript{x}} \longrightarrow  P_{\Sscript{y}}, \qquad \Big( u\tensor{A}k 1_{\Sscript{x}} \longmapsto u_{\Sscript{(1)}}\tensor{A}h\big(\mathscr{S}(u_{\Sscript{(2)}})\big)k 1_{\Sscript{y}}\Big).
$$ 
Clearly $F$ is an $A$-algebra map, and so it is  left $A$-linear. The fact that $F$ is left $\cH$-colinear is also clear, and this shows condition  $(ii)$, since $F$ is obviously bijective.

$(ii) \Rightarrow (iii)$. Assume for a given $x, y \in A(\Bbbk)$, there is  a left $\cH$-comodule algebra isomorphism $F: P_{\Sscript{x}} \to  P_{\Sscript{y}}$. For any $u \in \cH$, we denote by  $F(u\tensor{A}1_{\Sscript{x}})=u^{\Sscript{-}}\tensor{A}u^{\Sscript{+}} $ (summation understood). Consider the $\Bbbk$-linear map $g: \cH \to \Bbbk$ which sends $u \mapsto y\big(\varepsilon(u^{\Sscript{-}})\big)u^{\Sscript{+}}$. This is a $\Bbbk$-algebra map since $F$ it is so. For any $a \in A$, we have 
$$
g\big(\Sf{s}(a)\big)\,=\, y\big(\varepsilon(\Sf{s}(a)^{\Sscript{-}})\big)\Sf{s}(a)^{\Sscript{+}}\,=\, y\big(\varepsilon(\Sf{s}(a))\big) 1\,=\, y(a)
$$
and 
$$
g\big(\Sf{t}(a)\big)\,=\, y\big(\varepsilon(\Sf{t}(a)^{\Sscript{-}})\big)\Sf{t}(a)^{\Sscript{+}}\,=\, y\big(\varepsilon(1_{\Sscript{\cH}})\big) x(a) 1\,=\, x(a),
$$
as $F$ is $\Bbbk$-linear. Define the map
$$
\Sf{g}: (\Bbbk_{\Sscript{x}}, \cH_{\Sscript{x}}) \longrightarrow  (\Bbbk_{\Sscript{y}}, \cH_{\Sscript{y}}),\quad  \Big( (k1_{\Sscript{x}}, 1_{\Sscript{x}}\tensor{A}u\tensor{A}1_{\Sscript{x}}) \longmapsto (k1_{\Sscript{y}}, g(\mathscr{S}(u_{\Sscript{(1)}}))1_{\Sscript{y}}\tensor{A}u_{\Sscript{(2)}}\tensor{A} g(u_{\Sscript{(3)}})1_{\Sscript{y}})\Big).
$$
By  using the characterization given in Lemma \ref{lem:iso}, or a direct computation, one can shows that this map is an isomorphism of Hopf algebras.  Furthermore, it is easily seen that  the pair $(g,\Sf{g})$ satisfies the equalities of equation \eqref{Eq:Z}. Thus,  $(\Bbbk_{\Sscript{x}}, \cH_{\Sscript{x}})$ and $(\Bbbk_{\Sscript{y}}, \cH_{\Sscript{y}})$ are conjugated, which means condition $(iii)$. 

$(iii) \Rightarrow (i)$. This implication follows immediately from equation \eqref{Eq:Z}. 

Let us check that condition $(i)$ is fulfilled under assumption. 
For a given element $u \in \cH$ there exists, by the isomorphism of Remark \ref{rem:generators},  a finite family $M_{1},\cdots,M_{k}$  of dualizable right $\cH$-comodules and finite set of elements $\{(p^{l},\varphi^{l})\}_{1\leq\, l\,\leq k}$, $p^{l} \in M_{l}$ and $\varphi^{l} \in M_{l}^{*}$, such that $u$ is uniquely written as $u = \sum_{l} \Sf{s}\big(\varphi^{l}(p^{l}_{\Sscript{(0)}})\big) p^{l}_{\Sscript{(1)}} $ (see \cite[Section 4]{K/GT:2004} for more details on the map $\Sf{can}$ quoted in Remark \ref{rem:generators}). Given now two objects $x, y \in A(\Bbbk)$, we define $g :\cH \to \Bbbk$ by $g(u):= y\Big( \Sf{s}\big(\varphi^{l}(p^{l}_{\Sscript{(0)}})\big)\Big) x\Big( \varepsilon(p^{l}_{\Sscript{(1)}})\Big)$. It turns out that $g$ is a well defined algebra map, which satisfies $g \circ \Sf{s}= y$ and $g \circ \Sf{t}= x$. This shows that $\hH(\Bbbk)$ is transitive and finishes the proof.
\end{proof}

\subsection{GT Hopf algebroids and principal bundles over Hopf algebras}\label{ssec:GTHpbH}
Parallel to subsection \ref{ssec:GPS} we study here the relationship between GT Hopf algebroids and principal bundles over Hopf algebras (i.e.,  commutative Hopf Galois extensions  \cite[\S 8]{Montgomery:1993}, \cite{Schau:HGABGE}, or $\Bbbk$-\emph{torsor}  as in  \cite{Giraud:1971} and \cite{DemGab:GATIGAGGC}). This is a restricted notion of principal bundle, as defined in subsection \ref{ssec:W}, to the case of Hopf algebras. 

To be precise, let $B$ be a commutative Hopf algebra over $\Bbbk$, a pair $(P,\alpha)$ consisting of  an algebra extension $\alpha : A\to P$ and a right $B$-comodule algebra $P$ with left $A$-linear coaction, is said to be a right \emph{principal $B$-bundle} provided
$\alpha$ is faithfully flat and the canonical map $\Sf{can}_{\Sscript{P}}: P\tensor{A}P \to P\tensor{}H$, $x\tensor{A}y \mapsto xy_{\Sscript{(0)}} \tensor{} y_{\Sscript{(1)}}$ is  bijective. Notice that if we  translate this definition to the associated affine $\Bbbk$-schemes, then the outcome characterizes in fact the  notion of torsors as it was shown in  \cite[Corollaire 1.7, page 362]{DemGab:GATIGAGGC}, see also \cite[D\'efinition 1.4.1, page 117]{Giraud:1971}. 

Let $(A,\cH)$ be a Hopf algebroid as in subsection \ref{ssec:isotropy} and $\hH$ its associated presheaf of groupoids.  Take an object $x \in A(\Bbbk)$ and consider as before  $P_{\Sscript{x}}=\cH\tensor{A}\Bbbk_{\Sscript{x}}$ the right comodule algebra  over the isotropy Hopf algebra $(\Bbbk_{\Sscript{x}},\cH_{\Sscript{x}})$ with the algebra extension $\alpha_{\Sscript{x}}: A \to P_{\Sscript{x}}$ of equation \eqref{Eq:alphaBetax}. On the other hand denote by $\pP_{\Sscript{x}}$ the presheaf of sets which associated to each algebra $C$ the set  $\pP_{\Sscript{x}}(C) :=\Sf{t}^{-1}\big( \{1_{\Sscript{C}} \circ x\}\big)$ where $\Sf{t}$ is the target of the groupoid $\hH(C)$. 

\begin{lemma}\label{lema:P}
For any $x \in A(\Bbbk)$, the presheaf of sets $\mathscr{P}_{\Sscript{x}}$ is affine, and up to a natural isomorphism, is  represented by the  algebra $P_{\Sscript{x}}$. Furthermore, if $(A,\cH)$ is geometrically transitive, then $(P_{\Sscript{x}}, \alpha_{\Sscript{x}})$ is a principal right $\cH_{\Sscript{x}}$-bundle. 
\end{lemma}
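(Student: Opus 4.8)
The plan is to treat the two assertions separately: the representability is a formal consequence of the universal property of the tensor product, while the principal-bundle claim reduces to Theorem \ref{thm:A} together with an explicit description of the canonical map.

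First I would establish the natural isomorphism $\pP_{\Sscript{x}} \cong {\rm Alg}_{\Sscript{\Bbbk}}(P_{\Sscript{x}}, -)$. Since $P_{\Sscript{x}} = \cH_{\Sscript{\Sf{t}}}\tensor{A}{}_{\Sscript{x}}\Bbbk$ is the pushout in the category of commutative $\Bbbk$-algebras of the pair $\Sf{t}: A \to \cH$ and $x: A \to \Bbbk$, an algebra map $P_{\Sscript{x}} \to C$ is the same datum as a pair consisting of an algebra map $g: \cH \to C$ and an algebra map $h: \Bbbk \to C$ with $g \circ \Sf{t} = h \circ x$. As $\Bbbk$ is initial, the only such $h$ is the unit $1_{\Sscript{C}}$, so the pair amounts to an element $g \in \cH(C)$ with $g \circ \Sf{t} = 1_{\Sscript{C}} \circ x$, that is, precisely an element of $\pP_{\Sscript{x}}(C) = \Sf{t}^{-1}(\{1_{\Sscript{C}}\circ x\})$. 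Naturality in $C$ is immediate, yielding the desired isomorphism of presheaves of sets.

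For the second assertion I would verify the two defining conditions of a right principal $\cH_{\Sscript{x}}$-bundle. The faithful flatness of $\alpha_{\Sscript{x}}: A \to P_{\Sscript{x}}$ is immediate: geometric transitivity gives condition $(iii)$ of Theorem \ref{thm:A}, and applying it to the extension $x: A \to \Bbbk_{\Sscript{x}}$ says exactly that $\alpha_{\Sscript{x}}$ is faithfully flat. It remains to prove bijectivity of the canonical map. Here I would first record the right $\cH_{\Sscript{x}}$-coaction on $P_{\Sscript{x}}$, namely $u \tensor{A} 1_{\Sscript{x}} \mapsto (u_{\Sscript{(1)}}\tensor{A}1_{\Sscript{x}})\tensor{}(1_{\Sscript{x}}\tensor{A}u_{\Sscript{(2)}}\tensor{A}1_{\Sscript{x}})$, so that the canonical map reads
$$
\Sf{can}_{\Sscript{P_{\Sscript{x}}}}\big((u\tensor{A}1_{\Sscript{x}})\tensor{A}(v\tensor{A}1_{\Sscript{x}})\big) \,=\, (uv_{\Sscript{(1)}}\tensor{A}1_{\Sscript{x}})\tensor{}(1_{\Sscript{x}}\tensor{A}v_{\Sscript{(2)}}\tensor{A}1_{\Sscript{x}}).
$$
I would then exhibit the two-sided inverse
$$
(w\tensor{A}1_{\Sscript{x}})\tensor{}(1_{\Sscript{x}}\tensor{A}v\tensor{A}1_{\Sscript{x}}) \,\longmapsto\, (w\mathscr{S}(v_{\Sscript{(1)}})\tensor{A}1_{\Sscript{x}})\tensor{A}(v_{\Sscript{(2)}}\tensor{A}1_{\Sscript{x}}),
$$
and check that both composites are the identity by a direct computation: the antipode identity $\mathscr{S}(v_{\Sscript{(1)}})v_{\Sscript{(2)}} = \Sf{t}(\varepsilon(v))$, the defining relations $\Sf{t}(a)u\tensor{A}1_{\Sscript{x}} = x(a)(u\tensor{A}1_{\Sscript{x}})$ in $P_{\Sscript{x}}$ and $1_{\Sscript{x}}\tensor{A}\Sf{s}(a)u\tensor{A}1_{\Sscript{x}} = x(a)(1_{\Sscript{x}}\tensor{A}u\tensor{A}1_{\Sscript{x}})$ in $\cH_{\Sscript{x}}$, together with the counit property, collapse each composite back to the identity.

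The main obstacle is precisely the bijectivity of the canonical map, which is where the antipode and counit of the Hopf algebroid genuinely enter; the faithful flatness and the representability are comparatively formal. As an alternative that bypasses the explicit computation, one may invoke that geometric transitivity yields condition $(vi)$ of Theorem \ref{thm:A} (via \cite[Proposition 4.1]{Kaoutit/Kowalzig:14}), so that the trivial bundle $P_{\Sscript{x}} = \cH\tensor{A}\Bbbk_{\Sscript{x}}$ is a principal bi-bundle, hence in particular right principal; since the base algebra of $\cH_{\Sscript{x}}$ is $\Bbbk_{\Sscript{x}} = \Bbbk$, the Hopf-algebroid notion of right principal bundle specialises to the Hopf-algebra notion appearing in the statement, completing the argument.
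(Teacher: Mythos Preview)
Your proof is correct and follows the same strategy as the paper, which records only that ``the first claim is an immediate verification'' and ``the last one is a consequence of Theorem~\ref{thm:A}.'' You have simply filled in the details the paper omits: the pushout description for the representability, the appeal to condition~$(iii)$ of Theorem~\ref{thm:A} for faithful flatness of $\alpha_{\Sscript{x}}$, and an explicit antipode-based inverse for the canonical map (which, as your computation shows, is bijective for any trivial bundle without invoking geometric transitivity). Your closing alternative via condition~$(vi)$ is in fact the cleanest way to read the paper's one-line proof.
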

\begin{proof}
The first claim is an immediate verification. The last one is a consequence of Theorem \ref{thm:A}.
\end{proof}

In contrast with the case of transitive groupoids described in subsection \ref{ssec:GPS}, the converse in Lemma \ref{lema:P} is not obvious. Specifically,  it is not automatic to construct a  GT Hopf algebroid from a principal bundle over a Hopf algebra.  In more details, let $(P,\alpha)$ be a right principal $B$-bundle over a Hopf algebra $B$ with extension $\alpha:A \to P$, and 
consider $P\tensor{}P$ as a right $B$-comodule algebra via the diagonal coaction and set $$
\cH:=(P\tensor{}P)^{\Sscript{coinv_{B}}}\,=\, \Big\{ u \in P\tensor{}P|\,\, \varrho_{\Sscript{P\tensor{}P}}(u) = u\tensor{} 1_{\Sscript{B}} \Big\}
$$ 
its coinvariant subalgebra. 
The map $\alpha$ induces two maps $\Sf{s},\Sf{t}: A \to \cH$ which going to be the source and the target. The counity is induced by the multiplication of $P$. The comultiplication is derived from that of $(P,P\tensor{}P)$, however,  not in an immediate way, because  slightly technical assumptions are needed for this. 

Precisely,  consider $\cM:=(P\tensor{}P)\tensor{A}(P\tensor{}P)$ as a right $B$-comodule algebra with the coaction 
$$
\varrho: \cM \longrightarrow \cM \tensor{}B, \quad (x\tensor{}y)\tensor{A}(u\tensor{}v) \longmapsto (x_{\Sscript{(0)}}\tensor{}y_{\Sscript{(0)}}) \tensor{A} (u_{\Sscript{(0)}}\tensor{}v_{\Sscript{(0)}}) \tensor{} x_{\Sscript{(1)}}y_{\Sscript{(1)}} u_{\Sscript{(1)}}v_{\Sscript{(1)}}.
$$
This is a well defined coaction since we know that $P^{\Sscript{coinv_B}}\cong A$. Clearly we have that $\cH\tensor{A}\cH \subseteq \cM^{\Sscript{coinv_B}}$, and under the assumption of  equality we obtain:

\begin{proposition}\label{prop:GTPB}
Let $(P,\alpha)$ be a right principal $B$-bundle over a Hopf algebra $B$ with extension $\alpha: A \to P$. Denote by $\upsilonup: \cH:=(P\tensor{}P)^{\Sscript{coin_{B}}} \to P\tensor{}P$ the canonical injection where $P\tensor{}P$ is a right $B$-comodule algebra via the diagonal coaction. Assume that $\upsilonup$ is a faithfully flat extension and that $\cH\tensor{A}\cH = \cM^{\Sscript{coin_B}}$. 

Then $(A,\cH)$ admits a unique  structure of Hopf algebroid such that $(\alpha,\upsilonup):(A,\cH) \to (P,P\tensor{}P)$ is a morphism of GT Hopf algebroids. 
\end{proposition}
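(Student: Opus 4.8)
The plan is to descend the Hopf algebroid structure of the (trivially geometrically transitive) Hopf algebroid $(P,P\tensor{}P)$ of Example \ref{exam:HAlgd1} onto $(A,\cH)$ along the two injective maps $\alpha$ and $\upsilonup$; concretely $(A,\cH)$ is the algebraic incarnation of the gauge (Ehresmann) groupoid $(P\times P)/\bB$ of the principal bundle $P$, in the spirit of subsection \ref{ssec:GPS}. All structure maps of $(A,\cH)$ will be obtained by (co)restricting those of $(P,P\tensor{}P)$, so that $(\alpha,\upsilonup)$ is a morphism of Hopf algebroids essentially by construction, and uniqueness will follow from the injectivity of $\alpha$ and $\upsilonup$ (both faithfully flat, hence injective).

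First I would set up the source, target, counit and antipode. Since $\alpha$ identifies $A$ with $P^{\Sscript{coinv_{B}}}$, the elements $\alpha(a)\tensor{}1_{\Sscript{P}}$ and $1_{\Sscript{P}}\tensor{}\alpha(a)$ are coinvariant for the diagonal coaction, so I set $\Sf{s}(a):=\alpha(a)\tensor{}1_{\Sscript{P}}$ and $\Sf{t}(a):=1_{\Sscript{P}}\tensor{}\alpha(a)$, both landing in $\cH$. A short computation shows that the multiplication $m_{\Sscript{P}}$ sends any coinvariant element of $P\tensor{}P$ into $P^{\Sscript{coinv_{B}}}=\alpha(A)$, which lets me define $\varepsilon:=\alpha^{-1}\circ m_{\Sscript{P}}\circ\upsilonup:\cH\to A$; and since $B$ is commutative the flip $x\tensor{}y\mapsto y\tensor{}x$ preserves coinvariants, defining $\sS$. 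The delicate point, and the one where the hypothesis $\cH\tensor{A}\cH=\cM^{\Sscript{coinv_{B}}}$ is used, is the comultiplication: I would introduce $\Phi:P\tensor{}P\to\cM=(P\tensor{}P)\tensor{A}(P\tensor{}P)$, $x\tensor{}y\mapsto(x\tensor{}1_{\Sscript{P}})\tensor{A}(1_{\Sscript{P}}\tensor{}y)$, and check it is right $B$-colinear from $(P\tensor{}P,\varrho_{\Sscript{P\tensor{}P}})$ to $(\cM,\varrho)$, the verification reducing to $\varrho_{\Sscript{P}}(1_{\Sscript{P}})=1_{\Sscript{P}}\tensor{}1_{\Sscript{B}}$. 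Colinearity forces $\Phi(\cH)\subseteq\cM^{\Sscript{coinv_{B}}}$, and the standing assumption $\cM^{\Sscript{coinv_{B}}}=\cH\tensor{A}\cH$ then allows me to set $\Delta:=\Phi|_{\Sscript{\cH}}:\cH\to\cH\tensor{A}\cH$.

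By construction $\upsilonup$ intertwines $\Delta$ with $\Delta_{\Sscript{P\tensor{}P}}$ through the canonical $\chi:\cM\to(P\tensor{}P)\tensor{P}(P\tensor{}P)$, and likewise for the remaining maps, so $(\alpha,\upsilonup)$ is a morphism once the Hopf algebroid axioms for $(A,\cH)$ are known. Those axioms (coassociativity, counitality and the antipode identities \eqref{Eq:antipode}) I would obtain by transporting the identities of $(P,P\tensor{}P)$: each structure map of $(A,\cH)$ is the restriction of one of $(P,P\tensor{}P)$, $\upsilonup$ is injective, and the faithful flatness of $\upsilonup$ (together with the established $A$-flatness of $\cH$) ensures that $\cH\tensor{A}\cH$ and $\cH\tensor{A}\cH\tensor{A}\cH$ embed into the corresponding tensor powers of $P\tensor{}P$, so every identity descends. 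Uniqueness is then formal: the compatibility relations $\upsilonup\circ\Sf{s}=\Sf{s}\circ\alpha$, $\upsilonup\circ\Sf{t}=\Sf{t}\circ\alpha$, $\alpha\circ\varepsilon=\varepsilon\circ\upsilonup$, $\upsilonup\circ\sS=\sS\circ\upsilonup$, and $\Delta_{\Sscript{P\tensor{}P}}\circ\upsilonup=\chi\circ(\upsilonup\tensor{A}\upsilonup)\circ\Delta$, combined with the injectivity of $\alpha$, $\upsilonup$ and $\upsilonup\tensor{A}\upsilonup$, pin each map down; the only mildly subtle case is $\Delta$, where faithful flatness of $\upsilonup$ is exactly what guarantees that $\chi\circ(\upsilonup\tensor{A}\upsilonup)$ is injective on $\cH\tensor{A}\cH=\cM^{\Sscript{coinv_{B}}}$.

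Finally, geometric transitivity is the cleanest step and I would prove it via Theorem \ref{thm:A}: it suffices to show $\etaup=\Sf{s}\tensor{}\Sf{t}:A\tensor{}A\to\cH$ is faithfully flat. Composing with $\upsilonup$ gives $\upsilonup\circ\etaup=\alpha\tensor{}\alpha:A\tensor{}A\to P\tensor{}P$, faithfully flat because $\alpha$ is. Since $\upsilonup$ is faithfully flat, the functor $-\tensor{\cH}(P\tensor{}P)$ is faithfully exact; testing an injection $M\hookrightarrow N$ of $(A\tensor{}A)$-modules after this functor, which computes $M\tensor{A}\tensor{A}(P\tensor{}P)\hookrightarrow N\tensor{A}\tensor{A}(P\tensor{}P)$ and is injective as $P\tensor{}P$ is $(A\tensor{}A)$-flat, shows $\etaup$ is flat; and the surjection $\Spec{P\tensor{}P}\to\Spec{A\tensor{}A}$ factoring through $\Spec{\cH}$ shows $\Spec{\cH}\to\Spec{A\tensor{}A}$ is surjective. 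Hence $\etaup$ is faithfully flat; in particular $\cH$ is $A$-flat through $A\to A\tensor{}A\xrightarrow{\etaup}\cH$, so $(A,\cH)$ is a flat Hopf algebroid and Theorem \ref{thm:A} gives that it is geometrically transitive. As $(P,P\tensor{}P)$ is geometrically transitive by Example \ref{exam:HAlgd1}, $(\alpha,\upsilonup)$ is a morphism of GT Hopf algebroids. The main obstacle throughout is the well-definedness of $\Delta$, i.e.\ ensuring $\Phi$ maps $\cH$ into $\cH\tensor{A}\cH$ rather than merely into $\cM^{\Sscript{coinv_{B}}}$, which is precisely the role of the hypothesis $\cH\tensor{A}\cH=\cM^{\Sscript{coinv_{B}}}$.
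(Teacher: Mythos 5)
Your construction is correct and follows the same skeleton as the paper's proof: source, target, counit and antipode are obtained by (co)restriction from $(P,P\tensor{}P)$; the comultiplication is the restriction of your $\Phi$ (the paper's $\Delta'$) using the hypothesis $\cH\tensor{A}\cH=\cM^{\Sscript{coinv_B}}$; and geometric transitivity is deduced from Theorem \ref{thm:A} once $\etaup$ is seen to be faithfully flat via $\upsilonup\circ\etaup=\alpha\tensor{}\alpha$ and a two-out-of-three descent argument (which you spell out in detail, whereas the paper asserts it in one line). The genuine divergence is in how the coring axioms are verified. The paper proves counitality by an explicit computation: it rewrites $(p\tensor{}1)\tensor{A}(1\tensor{}q)$ through the translation map $\delta=\can{P}^{-1}$, so that $\Delta(u)$ becomes a sum of elementary tensors with \emph{both} legs in $\cH$, to which $\varepsilon\tensor{A}\id$ applies termwise. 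You instead descend the identities along embeddings. This works, but as stated it hides the actual content: the identities of $(P,P\tensor{}P)$ live over $P$ (through $\chi$), and the legs of $\Phi(\upsilonup(u))=\sum (x\tensor{}1)\tensor{A}(1\tensor{}y)$ are \emph{not} individually in $\cH$, so $\varepsilon\tensor{A}\id$ cannot be applied to them; to descend counitality one must exhibit a globally defined balanced map on $\cM$, e.g.\ $E\colon\cM\to P\tensor{}P$, $(x\tensor{}y)\tensor{A}(u\tensor{}v)\mapsto xyu\tensor{}v$, check $E\circ\Phi=\id_{P\tensor{}P}$ and that $E$ restricts on $\cH\tensor{A}\cH$ to $v\tensor{A}w\mapsto\upsilonup\big(\Sf{s}(\varepsilon(v))w\big)$, and then invoke injectivity of $\upsilonup$. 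This is precisely the role played by the paper's translation-map identity, packaged differently (your version avoids $\can{P}$ at this step); analogous balanced maps handle coassociativity and the antipode identities.

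Two smaller points. First, in your uniqueness argument the injectivity of $\chi\circ(\upsilonup\tensor{A}\upsilonup)$ on $\cH\tensor{A}\cH$ is true but does \emph{not} follow from faithful flatness of $\upsilonup$ alone: factoring it as $(\theta\tensor{}\id_{P})\circ(\id_{\cH}\tensor{A}\upsilonup)$ through $\cH\tensor{A}(P\tensor{}P)\cong(\cH\tensor{A}P)\tensor{}P$, where $\theta(v\tensor{A}u)=\upsilonup(v)\,(1\tensor{}u)$, one proves $\theta$ injective from the identity $\Lambda\circ\theta=(\id_{P}\tensor{}\can{P})\circ(\upsilonup\tensor{A}\id_{P})$, $\Lambda$ being the diagonal $B$-coaction of $P\tensor{}P$; so the Galois bijectivity of $\can{P}$ enters alongside the purity coming from faithful flatness of $\upsilonup$. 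Second, you use ``the established $A$-flatness of $\cH$'' before establishing it; this is harmless, since your flatness argument is independent of the coring axioms, but it should be performed first, exactly as the paper does at the start of its proof.
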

\begin{proof}
First observe that the map $\Sf{s}: A \to \cH$  is a flat extension (and so is $\Sf{t}$) since $\alpha$ and $\upsilonup$ are faithfully flat extension and we have a commutative diagram:
$$
\xymatrix@R=20pt{0 \ar@{->}^-{}[r] & \cH  \ar@{->}^-{\upsilonup}[r] & P\tensor{}P   \\ 0 \ar@{->}^-{}[r] & A \ar@{->}^-{\alpha}[r] \ar@<0.5ex>@{->}^-{\Sf{s}}[u] \ar@<-0.5ex>@{->}_-{\Sf{t}}[u]  & P  \ar@<0.5ex>@{->}^-{}[u] \ar@<-0.5ex>@{->}^-{}[u]  }
$$
of algebra maps. The fact that $(A,\cH)$ admits a coassociative comultiplication follows essentially form the second assumption. Indeed, let $\Delta': P\tensor{}P \to \cM$ be the map which sends $x\tensor{}y \mapsto (x\tensor{}1)\tensor{A}(1\tensor{}y)$, so  it is easily checked that, under the stated assumption, there is a map $\cH \to \cH\tensor{A} \cH$ which completes the diagram:
$$
\xymatrix@R=20pt{ 0 \ar@{->}^-{}[r] & \cH\tensor{A}\cH  \ar@{->}^-{}[r] & \cM \ar@<0.5ex>@{->}^-{\varrho}[rr] \ar@<-0.5ex>@{->}_-{-\tensor{}1}[rr]   & & \cM \tensor{}B \\ 0 \ar@{->}^-{}[r] & \cH \ar@{->}^-{\upsilonup}[r]  \ar@{.>}^-{\Delta}[u] & P\tensor{}P \ar@{->}^-{\Delta'}[u] & & }
$$
This gives a coassociative comultiplication on the $A$-bimodule $\cH$ using the structure of $A$-bimodule derived from the above source and the target $\Sf{s}$, $\Sf{t}$. To check that $\Delta$ is counital one uses the following equalities 
$$
(p\tensor{}1)\tensor{A}(1\tensor{}q)\,=\, \big(p_{\Sscript{(0)}}\tensor{}p_{\Sscript{(1)}}^{\Sscript{-}} q_{\Sscript{(1)}}^{\Sscript{-}}\big) \tensor{A} \big(  p_{\Sscript{(1)}}^{\Sscript{+}}q_{\Sscript{(1)}}^{\Sscript{+}}\tensor{} q_{\Sscript{(0)}}\big) \; \,\in \,(P\tensor{}P) \tensor{A}(P\tensor{}P),
$$
together with the properties of the translation map $\delta: B \to P\tensor{A}P$, $b \mapsto b^{\Sscript{-}}\tensor{A}b^{\Sscript{+}}$  given by the inverse of the canonical map $\Sf{can}_{\Sscript{P}}$. 

With the previous structure maps, $(A,\cH)$ is now a Hopf algebroid such that the pair of maps $(\alpha,\upsilonup): (A,\cH) \to (P,P\tensor{} P)$  is a morphism of Hopf algebroids with codomain a GT Hopf algebroid. 
Lastly, since $\alpha\tensor{}\alpha$ is a faithfully flat extension, $\Sf{s}\tensor{}\Sf{t}: A\tensor{}A \to \cH$ is also faithfully flat, and hence $(A,\cH)$ is  by Theorem \ref{thm:A} a GT Hopf algebroid as well.
\end{proof}

\subsection{GT Hopf algebroids with empty character groupoid}\label{ssec:AK}
Let $(A,\cH)$ be a flat Hopf algebroid over $\Bbbk$ with $A\neq 0$ and $A(\Bbbk) = \emptyset$. For example,  taking any non zero algebra $A$ with $A(\Bbbk) = \emptyset$  and consider the Hopf algebroids given in Examples \ref{exam:HAlgd1} and \ref{exam:HAlgd3}. Now, let $L$ be a field extension of $\Bbbk$ such that $A(L) \neq \emptyset$ and denote by $\oOL: \Algl \to \Algk$ the forgetful functor from the category of commutative $L$-algebras to $\Bbbk$-algebras. Next, we will use the notations of Example \ref{exam:Scalars}. So, fix an algebra map $q \in A(L)$ and denote by $\td{q}  \in \AL(L)=\Algl(\AL, L)$ its image, that is, the $L$-algebra map $\td{q}: \AL \to L$  sending $a\tensor{}l \mapsto q(a)l$. Consider the base extension Hopf algebroid $(\Lq, \Lq\tensor{A}\cH\tensor{A}\Lq):=(\Lq, \Hq)$ over $\Bbbk$, where $\Lq$ is considered as an algebra extension of $A$ via the map $q$.  The associated presheaf of groupoids is denoted by $\hHq$ and its composition with $\oOL$ by $\td{\hHq}:=\hHq \circ \oOL$. In this way, we get  a presheaf of groups
\begin{equation}\label{Eq:Hq}
\td{\hHq}^{\star}: \Algl \longrightarrow \Sf{Grps},\quad  \Big(  R \longrightarrow \td{\hHq}(R)^{\Sscript{z}} \Big)
\end{equation}
where $z : L\to R$ is the $\Bbbk$-algebra map defining $R$ as an object in $\Algl$ and where $\td{\hHq}(R)^{\Sscript{z}}$ is the isotropy group of  the groupoid $\td{\hHq}(R)$ attached to the object $z$. Thus,  for any pair $(R,z)$ as before,  we have  by Example \ref{exam:Base change}  that 
$$
\td{\hHq}(R)^{\Sscript{z}} :=\LR{  (z,g,z) |\,\, g \in \cH(R) \text{ such that }  g \Sf{s} = g \Sf{t} = z q  }
$$
where the multiplication is given as in Example \ref{exam:induced} and the unit is the element $(z,zq\varepsilon,z)$.  

On the other hand, following Lemma \ref{lem:iso}  we can define the presheaf of groups attached to  the Hopf algebroid  $(\AL, \HL)$ over $L$, at the point $\td{q}$. That is, we can consider the presheaf  $\hHL$ associated to $(\AL, \HL)$, and denote by 
\begin{equation}\label{Eq:HLq}
\gGL^{\Sscript{\td{q}}}: \Algl \longrightarrow \Sf{Grps},\quad  \Big(  R \longrightarrow \gGL^{\Sscript{\td{q}}}(R) \Big)
\end{equation}
where $\gGL^{\Sscript{\td{q}}} (R)$ is the isotropy group of the groupoid $\hHL(R)$ at the point $\td{q} \in \AL(L)$. Thus 
$$
\gGL^{\Sscript{\td{q}}} (R):=\LR{ h \in \HL(R) =\Algl(\HL, R)|\,\;  h \sL = h \tL = \td{q} },
$$ 
where the group structure comes from the groupoid $\hHL(R)$.

\begin{proposition}\label{prop:Lq}
Given $(R,z)$ as above, then the following morphisms of groups 
$$
\xymatrix@R=0pt{  \gGL^{\Sscript{\td{q}}} (R) \ar@{->}^-{\phiup_{\Sscript{q, \, R}}}[rr]  & &  \td{\hHq}(R)^{\Sscript{z}} 
\\ h \ar@{|->}[rr] & & (z,\hat{h},z), }
$$
where $\hat{h} \in \cH(R)$ is the $\Bbbk$-algebra map sending $u \mapsto h(u\tensor{}1)$, establish a natural isomorphism 
$$
\phiup_{\Sscript{q}} : \gGL^{\Sscript{\td{q}}} \longrightarrow \td{\hHq}^{\star}
$$ 
of presheaves of groups. In particular, up to a natural isomorphism,  $\td{\hHq}^{\star}$ is represented by the isotropy Hopf $L$-algebra $(L, \hH_{\Sscript{L,\, \td{q}}})$ of the Hopf algebroid $(\AL, \HL)$ at the point $\td{q}$.
\end{proposition}
\begin{proof}
Let us first check that $\phiup_{\Sscript{q, \, R}}$  is a well defined map. Take $h \in \gGL^{\Sscript{\td{q}}} $, then, for every $a \in A$,  we have 
$$
\hat{h} \circ \Sf{s}(a)= h(\Sf{s}(a)\tensor{}1) = h \circ \sL(a\tensor{}1) = h \circ \tL(a\tensor{}1)= h \circ (\Sf{t}(a)\tensor{}1) = \hat{h} \circ \Sf{t}(a) = \td{q}(a) = q(a). 1_{\Sscript{L}} = zq(a). 
$$
Hence $(z, \hat{h}, z) \in  \td{\hHq}(R)^{\Sscript{z}} $. The image by $\phiup_{\Sscript{q, \, R}}$ of the identity  element  $\td{q}\eL$ is $(z, \widehat{\td{q}\eL}, z)=(z, z q \varepsilon , z)$ which is the identity  element of the group $\td{\hHq}(R)^{\Sscript{z}} $. Now, given  $h, h' \in \gGL^{\Sscript{\td{q}}}(R) $ and  $u \in \cH$,  we have that 
$$
\widehat{h h'} (u)  = (h h') (u\tensor{}1) = h'\big((u\tensor{}1)_{\Sscript{(1)}}\big) \, h\big((u\tensor{}1)_{\Sscript{(2)}}\big) = h'\big(u_{\Sscript{(1)}} \tensor{}1\big) \, h\big(u_{\Sscript{(2)}} \tensor{}1\big) = \hat{h'}(u_{\Sscript{(1)}}) \,  \hat{h}(u_{\Sscript{(2)}})  = (\hat{h} \, \hat{h'}) (u)
$$
which implies that $\widehat{h h'} = \hat{h} \, \hat{h'}$.  Therefore, 
$$
(z, \hat{h}, z) \,  (z, \hat{h'}, z) = (z, \hat{h} \hat{h'}, z) = (z, \widehat{hh'}, z), 
$$ 
which shows that $\phiup_{\Sscript{q, \, R}}$ is a morphism of groups.  On the other hand, $\phiup_{\Sscript{q, \, R}}$ is clearly injective and if we take an element  $(z, g, z) \in \td{\hHq}(R)^{\Sscript{z}} $ and set $h = \hat{g}: \HL \to R$ sending $u\tensor{}l \mapsto g(u) \tensor{}l$, then we have that $(z,g, z) = \phiup_{\Sscript{q, \, R}}(h)$.  This shows that $\phiup_{\Sscript{q, \, R}}$ is also surjective, and thus an isomorphism of groups. Lastly, it is immediate to see that $\phiup_{\Sscript{q, \, -}}$ is a natural transformation and so a natural isomorphism as desired.  The particular statement follows directly from Lemma \ref{lem:iso}. 
\end{proof}

\begin{remark}\label{rem:Hq}
Let $(\Lq, \Lq\tensor{A}\cH\tensor{A}\Lq)$ be as above the  base change Hopf algebroid of  $(A,\cH)$ and denote by $(L,\bara{\Hq})$ its quotient Hopf $L$-algebra where $\bara{\Hq}: = \Lq\tensor{A}\cH\tensor{A}\Lq /\langle  \Sf{s} - \Sf{t}\rangle$ is the quotient modulo the Hopf ideal generated by the set 
$\LR{  \Sf{s}(l) - \Sf{t}(l) }_{l \, \in \, L}$. Then the following map of $L$-vector spaces
$$
\xymatrix@R=0pt{  \bara{\Hq}  \ar@{->}[rr]  & &  \hH_{\Sscript{L,\, \td{q}}}  \\ \bara{l\tensor{A}u \tensor{A} l'} \ar@{|->}[rr] && 1\tensor{\AL} (u\tensor{}ll') \tensor{\AL} 1   }
$$
is a surjective morphism of Hopf $L$-algebras.  On the other hand, the presheaf of set $\pP_{\Sscript{\td{q}}}: \Algl \to \Sf{Sets}$  defined as in Lemma \ref{lema:P} for the Hopf algebroid $(\AL, \HL)$ is, up to a natural isomorphisms, represented by the left $\cH$-comodule $L$-algebra $\cH \tensor{A}\Lq$ which under condition $(i)$ of Theorem \ref{thm:A}, becomes a principal $(\cH, \Hq)$-bibundle.
\end{remark}

\begin{proposition}\label{prop:Lk}
Let $(A,\cH)$ be a flat Hopf algebroid over $\Bbbk$ with $A\neq 0$ and $A(\Bbbk) = \emptyset$,  denote by $\hH$ its associated presheaf of groupoids. Consider $L$ a field extension of $\Bbbk$ such that $A(L) \neq \emptyset$.  Assume that  the unit map $\etaup=\Sf{s}\tensor{}\Sf{t}: A\tensor{}A \to \cH$ is a faithfully flat extension. Then
\begin{enumerate}[(1)]
\item $\hH(L)$ is a transitive groupoid;
\item For every $p, q \in A(L)$,  the base change Hopf algebroids $(L_{\Sscript{q}}, \Hq)$ and $(L_{\Sscript{p}},\Hp)$ are weakly equivalent. 
\end{enumerate}
\end{proposition}
\begin{proof}
As we have observed in Remark \ref{rem:Lk}, if $\etaup$ is a faithfully flat extension, then so is $\etaup_{\Sscript{L}}: \AL\tensor{L}\AL \to \HL$. Since $\AL(L) \neq \emptyset$, we have by Theorem \ref{thm:A}, that $(\AL, \HL)$ is geometrically transitive Hopf algebroid. Therefore,  by applying  Proposition \ref{prop:conjugation}, we know that $\HL(L)$ is a transitive groupoid. Now, given $p,q \in A(L)$ we obtain  two objects $\td{p}, \td{q} \in \AL(L)$ of this groupoid. Hence, there exists an $L$-algebra map $h \in \HL(L)$ such that $h \circ \sL = \td{p}$ and $h \circ \tL =\td{q}$. Consider the algebra map $g=\hat{h}: \cH \to L$ sending $u \mapsto h(u\tensor{}1)$, so we have that $g \circ \Sf{s} = p$ and $g \circ \Sf{t} = q$. This proves part $(1)$. As for part $(2)$, we know by Remark  \ref{rem:Lk} that $(A, \cH)$ satisfies condition $(iii)$ of Theorem \ref{thm:A}. Henceforth, the canonical base change maps  $(A, \cH) \to (L_{\Sscript{p}}, \Hp)$  and $(A, \cH) \to (L_{\Sscript{q}}, \Hq)$ are weak equivalences by \cite[Proposition 5.1]{Kaoutit/Kowalzig:14}. Therefore,  $(L_{\Sscript{p}}, \Hp)$ and $(L_{\Sscript{q}}, \Hq)$ are weakly equivalent by applying \cite[Theorem A]{Kaoutit/Kowalzig:14} and this finishes the proof.
\end{proof}

\subsection{Dualizable comodules over GT Hopf algebroids are locally free of constant rank}\label{ssec:VB}
The aim of this subsection is to apply Theorem \ref{thm:A} in order to give an elementary proof of the well know  fact sated in \cite[page 114]{Deligne:1990} which implicitly asserts  that over a GT Hopf algebroid with non empty character groupoid, any comodule which has a locally free fibre with rank $n$, then so are other fibres. An important consequence of this fact is that any dualizable comodule over such a Hopf algebroid is locally free with constant rank.  This is  an algebraic  interpretation of a well known property on representations of transitive groupoid in vector spaces. Namely, if a given representation over such a groupoid  has a finite dimensional fibre,  then so are all other fibres and all the fibres have  the same dimension.
We start by the following general lemma which will be needed below.

\begin{lemma}\label{lema: DFR}
Let $\varphi: R \to T$ be a faithfully flat extension of commutative algebras. Then, for any $R$-module $P$, the following conditions are equivalent.
\begin{enumerate}[(i)]
\item $P$ is locally free $R$-module of constant rank $n$;
\item $P_{\Sscript{\varphi}}:=P\tensor{R}T$ is locally free $T$-module of constant rank $n$.
\end{enumerate}
\end{lemma}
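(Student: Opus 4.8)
The plan is to treat the two implications separately: $(i)\Rightarrow(ii)$ is a routine base-change computation, while $(ii)\Rightarrow(i)$ is a faithfully flat descent argument assembled from three standard descent principles plus a fibrewise dimension count. Throughout I would use the characterization that a module $P$ over a commutative ring $R$ is locally free of constant rank $n$ precisely when it is finitely generated and projective and $\dim_{k(\mathfrak{p})}(P\otimes_{R}k(\mathfrak{p}))=n$ for every prime $\mathfrak{p}$ of $R$.

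For $(i)\Rightarrow(ii)$, base change along $\varphi$ preserves finite generation and projectivity, so $P_{\Sscript{\varphi}}=P\tensor{R}T$ is finitely generated and projective over $T$. To compute its rank at a prime $\mathfrak{q}$ of $T$, I would set $\mathfrak{p}:=\varphi^{-1}(\mathfrak{q})$ and use the isomorphisms $P_{\Sscript{\varphi}}\tensor{T}k(\mathfrak{q})\cong P\tensor{R}k(\mathfrak{q})\cong \big(P\tensor{R}k(\mathfrak{p})\big)\tensor{k(\mathfrak{p})}k(\mathfrak{q})$, whose $k(\mathfrak{q})$-dimension equals $\dim_{k(\mathfrak{p})}(P\tensor{R}k(\mathfrak{p}))=n$. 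This gives $(ii)$.

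For the converse $(ii)\Rightarrow(i)$, the bulk of the work is to descend the property \emph{finitely generated projective} along the faithfully flat extension $\varphi$, after which the rank is immediate. I would proceed by three descent steps. \emph{Descent of finite generation}: writing $P$ as the union of its finitely generated submodules $P_{\Sscript{i}}$, flatness of $T$ yields $P_{\Sscript{\varphi}}=\varinjlim(P_{\Sscript{i}}\tensor{R}T)$ with injective transition maps; since $P_{\Sscript{\varphi}}$ is finitely generated over $T$ the colimit stabilizes at some $P_{\Sscript{i_0}}$, so $(P/P_{\Sscript{i_0}})\tensor{R}T=0$, and faithful flatness forces $P=P_{\Sscript{i_0}}$. \emph{Descent of finite presentation}: choosing a surjection $R^{\Sscript{k}}\twoheadrightarrow P$ with kernel $K$ and tensoring with the flat module $T$, the module $K\tensor{R}T$ is finitely generated (being the kernel of a map from a finitely generated module onto the finitely presented module $P_{\Sscript{\varphi}}$), whence $K$ is finitely generated by the first step, so $P$ is finitely presented. \emph{Descent of flatness}: using the flat base-change isomorphism $\mathrm{Tor}_{\Sscript{1}}^{\Sscript{R}}(P,N)\tensor{R}T\cong \mathrm{Tor}_{\Sscript{1}}^{\Sscript{T}}(P_{\Sscript{\varphi}},N\tensor{R}T)$, flatness of $P_{\Sscript{\varphi}}$ kills the right-hand side, and faithful flatness gives $\mathrm{Tor}_{\Sscript{1}}^{\Sscript{R}}(P,N)=0$ for all $N$, so $P$ is flat. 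A finitely presented flat module over a commutative ring is projective, hence $P$ is finitely generated and projective over $R$. Finally, for the constant rank, surjectivity of $\operatorname{Spec}(T)\to\operatorname{Spec}(R)$ (faithful flatness) realizes every $\mathfrak{p}$ of $R$ as $\varphi^{-1}(\mathfrak{q})$, and the fibre identity already used gives $\dim_{k(\mathfrak{p})}(P\tensor{R}k(\mathfrak{p}))=\dim_{k(\mathfrak{q})}(P_{\Sscript{\varphi}}\tensor{T}k(\mathfrak{q}))=n$; thus $P$ is locally free of constant rank $n$.

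I expect the main obstacle to be organizing the descent of finitely generated projectivity cleanly, particularly the first step, where one must combine commutation of tensor product with the filtered colimit of finitely generated submodules and faithful flatness, and the third step, where the correct flat base-change formula for $\mathrm{Tor}$ has to be invoked. Everything else reduces to the elementary fibrewise dimension computation.
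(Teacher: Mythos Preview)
Your proof is correct. The paper's own proof takes a somewhat different, more citation-heavy route: it invokes Bourbaki (specifically \cite[Proposition~12, p.~53 and Th\'eor\`eme~1, p.~138]{Bou:AC12}) to dispose at once of the faithfully flat descent of the property ``finitely generated projective'', thereby reducing the entire lemma to checking that the rank function is constant with value $n$ on one side if and only if it is on the other. That final rank check is then carried out by localizing at primes (rather than passing to residue fields as you do) and observing that $\mathfrak{r}^{\Sscript{R}}_{\Sscript{P}} \circ \varphi_{\Sscript{*}} = \mathfrak{r}^{\Sscript{T}}_{\Sscript{P_{\varphi}}}$, together with surjectivity of $\varphi_{\Sscript{*}}$ from faithful flatness.

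What you do differently is to \emph{unpack} the Bourbaki citation into the three explicit descent steps (finite generation via a filtered-colimit argument, finite presentation, and flatness via base change of $\mathrm{Tor}$), then deduce projectivity from ``finitely presented $+$ flat $\Rightarrow$ projective''. This buys self-containment and makes the role of faithful flatness transparent at each stage, at the cost of some length. The paper's approach is shorter and keeps the focus on the rank comparison, which is the only part specific to the ``constant rank $n$'' statement. Your fibrewise dimension computation via residue fields and the paper's via local ranks of free modules are equivalent formulations of the same identity.
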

\begin{proof}
By \cite[Proposition 12, page 53, and Th\'eor\`eme 1, page 138]{Bou:AC12}, we only need to check that $P$ is of a constant rank $n$  if and only if so is $P_{\Sscript{\varphi}}$. So let us first denote by $\varphi_{\Sscript{*}}:\Spec{T} \to \Spec{R}$ the associated continuous map of $\varphi$. Denote by $\mathfrak{r}^{\Sscript{R}}_{\Sscript{P}}: \Spec{R} \to \mathbb{Z}$ and $\mathfrak{r}^{\Sscript{T}}_{\Sscript{P_{\varphi}}}: \Spec{T} \to \mathbb{Z}$, the rank functions corresponding, respectively, to $P$ and $P_{\Sscript{\varphi}}$.  

It suffices to check that $\mathfrak{r}^{\Sscript{R}}_{\Sscript{P}}$ is a constant function with value $n$ if and only if $\mathfrak{r}^{\Sscript{T}}_{\Sscript{P_{\varphi}}}$ is a constant function with the same value. 
Given a prime ideal $\fk{a} \in \Spec{T}$, consider the localising algebras $T_{\Sscript{\fk{a}}}$ and $R_{\Sscript{\varphi_{\Sscript{*}}(\fk{a})}}$ at the prime ideals  $\fk{a}$ and $\varphi_{\Sscript{*}}(\fk{a})$. It is clear that we have an isomorphism of $T_{\Sscript{\fk{a}}}$-modules $P\tensor{R}T_{\Sscript{\fk{a}}}\,\cong \, P_{\Sscript{\varphi_*(\fk{a})}} \tensor{R_{\Sscript{\varphi_*(\fk{a})}}}T_{\Sscript{\fk{a}}}$, where $\varphi_{\Sscript{\fk{a}}}: R_{\Sscript{\varphi_*(\fk{a})}} \to T_{\Sscript{\fk{a}}}$ is the associated localisation map of the extension $\varphi$. Therefore, the free modules $P\tensor{R}T_{\Sscript{\fk{a}}}$ and $P_{\Sscript{\varphi_*(\fk{a})}}$ have the same rank.    Hence, we have 
$\mathfrak{r}^{\Sscript{R}}_{\Sscript{P}} \big(\varphi_{\Sscript{*}}(\fk{a}) \big) \,=\,  \mathfrak{r}^{\Sscript{T}}_{\Sscript{P_{\varphi}}}(\fk{a})$, for any $\fk{a} \in \Spec{T}$, and so $\mathfrak{r}^{\Sscript{R}}_{\Sscript{P}} \circ \varphi_{\Sscript{*}} \,=\,  \mathfrak{r}^{\Sscript{T}}_{\Sscript{P_{\varphi}}}$.  
This shows that if $\mathfrak{r}^{\Sscript{R}}_{\Sscript{P}}$ is a constant function with value $n$, then so is $\mathfrak{r}^{\Sscript{T}}_{\Sscript{P_{\varphi}}}$. The converse also hods true since we know that $\varphi_{\Sscript{*}}$ is surjective, because of the faithfully flatness of $\varphi$, and this finishes the proof. 
\end{proof}

\begin{proposition}\label{prop:DFR}
Let $(A,\cH)$ be a flat Hopf algebroid with $A\neq 0$ and $A(\Bbbk) \neq \emptyset$. Assume that $(A,\cH)$ is  geometrically transitive, and let $M$ be a (right) $\cH$-comodule whose underlying $A$-module is finitely generated and projective. Given two objects $x \in A(S)$ and $y \in A(T)$, then the following are equivalent
\begin{enumerate}[(i)]
\item $M_{\Sscript{x}}:=M\tensor{A}S$ is locally free $S$-module of constant rank $n$;
\item $M_{\Sscript{y}}:=M\tensor{A}T$ is locally free $T$-module of constant rank $n$.
\end{enumerate}
\end{proposition}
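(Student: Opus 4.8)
The plan is to reduce the statement to two ingredients already available in the excerpt: the faithfully flat descent of the property ``locally free of constant rank $n$'' recorded in Lemma~\ref{lema: DFR}, and the comparison isomorphism between fibres at locally isomorphic objects that was produced inside the proof of the implication $(ii)\Rightarrow(iii)$ of Theorem~\ref{thm:A}. Since $(A,\cH)$ is geometrically transitive, Theorem~\ref{thm:A} guarantees that any two objects of $\hH$ are locally isomorphic in the sense of Definition~\ref{def:1}. Moreover, as $M$ is finitely generated and projective over $A$, each extension $M_{\Sscript{x}}=M\tensor{A}{}_{\Sscript{x}}S$ is finitely generated and projective, hence automatically locally free of finite rank; so the only content of the claim is the constancy and the coincidence of the rank, not finiteness.

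First I would dispose of the case of two distinct algebras by base changing to a common one. Set $R:=S\tensor{}T$ and consider the two canonical maps $i_{\Sscript{S}}\colon S\to R$ and $i_{\Sscript{T}}\colon T\to R$; as $\Bbbk$ is a field and $S,T$ are nonzero, $R$ is free over each factor, so both maps are faithfully flat extensions. Composing yields two objects $\bara{x}:=i_{\Sscript{S}}\circ x$ and $\bara{y}:=i_{\Sscript{T}}\circ y$ in $A(R)$, together with the canonical identifications $M_{\Sscript{x}}\tensor{S}R\cong M\tensor{A}{}_{\Sscript{\bara{x}}}R$ and $M_{\Sscript{y}}\tensor{T}R\cong M\tensor{A}{}_{\Sscript{\bara{y}}}R$. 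Applying Lemma~\ref{lema: DFR} to $i_{\Sscript{S}}$ and to $i_{\Sscript{T}}$, the module $M_{\Sscript{x}}$ (respectively $M_{\Sscript{y}}$) is locally free of constant rank $n$ if and only if $M\tensor{A}{}_{\Sscript{\bara{x}}}R$ (respectively $M\tensor{A}{}_{\Sscript{\bara{y}}}R$) is. Hence it suffices to treat the case where both objects live in $A(R)$ for a single algebra $R$.

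In that single-algebra case I would invoke local isomorphism directly: there is a faithfully flat extension $p\colon R\to R'$ and an arrow $g\in\cH(R')$ with $\td{x}:=p\circ x=g\circ\Sf{s}$ and $\td{y}:=p\circ y=g\circ\Sf{t}$. The crucial point is the $R'$-linear isomorphism
\[
M\tensor{A}{}_{\Sscript{\td{x}}}R'\;\longrightarrow\;M\tensor{A}{}_{\Sscript{\td{y}}}R',\qquad m\tensor{A}r'\longmapsto m_{\Sscript{(0)}}\tensor{A}g^{-1}(m_{\Sscript{(1)}})r',
\]
whose well-definedness and bijectivity, with inverse $m\tensor{A}r'\mapsto m_{\Sscript{(0)}}\tensor{A}g(m_{\Sscript{(1)}})r'$, were already checked in the proof of Theorem~\ref{thm:A} and rest on the antipode identities and the comodule counit. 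Under the identifications $M\tensor{A}{}_{\Sscript{\td{x}}}R'\cong M_{\Sscript{x}}\tensor{R}R'$ and $M\tensor{A}{}_{\Sscript{\td{y}}}R'\cong M_{\Sscript{y}}\tensor{R}R'$ this isomorphism transports constant rank $n$ from one side to the other, and a final application of Lemma~\ref{lema: DFR} to the faithfully flat $p$ descends the property from $R'$ back to $R$, which gives the equivalence $(i)\Leftrightarrow(ii)$.

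The routine part is merely the bookkeeping of the successive base changes; the one genuinely delicate step is the comparison isomorphism at locally isomorphic objects. I expect this to be the main obstacle in principle, but since exactly this isomorphism is the one manufactured in the proof of Theorem~\ref{thm:A}, the difficulty is already resolved there and I would simply reuse it. The only new observation needed here is that being locally free of constant rank $n$ both ascends along and descends along faithfully flat extensions, which is the precise content of Lemma~\ref{lema: DFR}.
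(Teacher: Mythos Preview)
Your proposal is correct and follows essentially the same approach as the paper's own proof: reduce to a common base ring via $R=S\tensor{}T$ together with Lemma~\ref{lema: DFR}, and in the single-algebra case use the comparison isomorphism $M\tensor{A}{}_{\Sscript{\td{x}}}R'\cong M\tensor{A}{}_{\Sscript{\td{y}}}R'$ coming from local isomorphism (Theorem~\ref{thm:A}) followed by faithfully flat descent. The only cosmetic difference is that the paper treats the single-algebra case first and the reduction second, whereas you reverse the order.
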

\begin{proof}
Let us first show that the stated conditions are equivalent when $R=S=T$. In this case we know by Theorem \ref{thm:A}, that the objects $x, y \in A(R)$ are locally isomorphic. Therefore,  there exists a faithfully flat extension $p: R\to R'$  such that $M_{\Sscript{\td{x}}}=M\tensor{\td{x}}R'$ is isomorphic as $R'$-module to $M_{\Sscript{\td{y}}}=M\tensor{\td{y}}R'$ , where $\td{x}= p \circ x$ and $\td{y}= p \circ y$. Thus, $M_{\Sscript{\td{x}}}$ and $M_{\Sscript{\td{y}}}$ they have the same rank function. 

On the other hand,  by  applying  Lemma \ref{lema: DFR} to $M_{\Sscript{x}}$, we get that $M_{\Sscript{\td{x}}}$ is locally free $R'$-module of constant rank $n$ if and only if $M_{\Sscript{x}}$ is locally free $R$-module of constant rank $n$.  The same result hold true using $M_{\Sscript{y}}$ and $M_{\Sscript{\td{y}}}$.
Therefore, $M_{\Sscript{x}}$  is locally free $R$-module of constant rank $n$ if and only if so is $M_{\Sscript{y}}$.

For the general case $S \neq T$, consider $R:=T\tensor{}S$ and set the algebra maps $\bara{x}:= \iota_{\Sscript{S}} \circ x$, $\bara{y}:= \iota_{\Sscript{T}}\circ y$, where $\iota_{\Sscript{S}}: S \rightarrow R \leftarrow T: \iota_{\Sscript{T}}$ are the obvious maps. By the previous case, we know that $M_{\Sscript{\bara{x}}}$ is locally free $R$-module of constant rank $n$ if and only if so is $M_{\Sscript{\bara{y}}}$. 
Now by Lemma \ref{lema: DFR}, we have, from one hand, that $M_{\Sscript{\bara{x}}}$ is locally free $R$-module of constant rank $n$ if and only if $M_{\Sscript{x}}$ is locally free $S$-module of constant rank $n$, and from the other, we have that $M_{\Sscript{\bara{y}}}$ is locally free $R$-module of constant rank $n$ if and only if $M_{\Sscript{y}}$ is locally free $T$-module of constant rank $n$. Therefore, $M_{\Sscript{x}}$ is locally free $S$-module of constant rank $n$ if and only if $M_{\Sscript{y}}$ is so as $T$-module.
\end{proof}

As a corollary of Proposition \ref{prop:DFR}, we have:
\begin{corollary}\label{coro:DFR}
Let $(A,\cH)$ be a flat Hopf algebroid with $A\neq 0$ and $A(\Bbbk) \neq \emptyset$. Assume that $(A,\cH)$ is  geometrically transitive. Then every dualizable (right) $\cH$-comodule is a locally free $A$-module of constant rank.  In particular, given a dualizable right $\cH$-comodule $M$ and two distinct object $x\neq y \in A(\Bbbk)$, then $M_{\Sscript{x}}$ and $M_{\Sscript{y}}$ have the same dimension as $\Bbbk$-vector spaces. 
\end{corollary}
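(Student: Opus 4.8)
The plan is to derive this corollary directly from Proposition \ref{prop:DFR} by exploiting the hypothesis $A(\Bbbk) \neq \emptyset$. The key observation is that over the base field $\Bbbk$, every module is automatically locally free of a well-defined (finite) rank, so the field provides a ``test point'' from which constancy of rank propagates to all of $\Spec(A)$.

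First I would fix a dualizable right $\cH$-comodule $M$. By Lemma \ref{lema:dualizable}, its underlying $A$-module is finitely generated and projective, hence locally free of finite rank; what must be shown is that this rank is \emph{constant} on $\Spec(A)$. Since $A(\Bbbk) \neq \emptyset$, choose any object $x \in A(\Bbbk)$, i.e.~an algebra map $x : A \to \Bbbk$. The extended module $M_{\Sscript{x}} = M\tensor{A}\Bbbk$ is a finite-dimensional $\Bbbk$-vector space, say of dimension $n$; as a module over the field $\Bbbk$ it is trivially locally free of constant rank $n$. The strategy is then to transport this constancy back to $A$ and across to every other point.

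The main step is to apply Proposition \ref{prop:DFR}. To show $M$ is of constant rank, I would argue that for \emph{any} prime $\fk{a} \in \Spec(A)$ the localization $M_{\Sscript{\fk{a}}}$ has rank $n$; equivalently, that the rank function $\mathfrak{r}^{\Sscript{A}}_{\Sscript{M}}$ is constantly $n$. The clean way is this: for $x \in A(\Bbbk)$ as above we have $M_{\Sscript{x}} = M \tensor{A} \Bbbk$ locally free of constant rank $n$ over $\Bbbk = S$. Now take any second object $y \in A(\Bbbk)$ (even $y=x$ suffices for the comparison between $A$ and $\Bbbk$ if one inspects the proof of Proposition \ref{prop:DFR}, but working with two points is cleanest): Proposition \ref{prop:DFR}, applied with $S = T = \Bbbk$, yields that $M_{\Sscript{y}} = M\tensor{A}\Bbbk$ is likewise locally free of constant rank $n$. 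This already gives the ``In particular'' clause: for distinct $x \neq y \in A(\Bbbk)$, the fibres $M_{\Sscript{x}}$ and $M_{\Sscript{y}}$ are $\Bbbk$-vector spaces of the same dimension $n$.

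For the global statement that $M$ itself is locally free of \emph{constant} rank $n$ over $A$, I would invoke Lemma \ref{lema: DFR} together with the local isomorphism property. Concretely, localizing at an arbitrary prime $\fk{a}$ and comparing with the fibre at a $\Bbbk$-point: since $A(\Bbbk)\neq\emptyset$, the map $x:A\to\Bbbk$ realizes one closed point, and the constancy of $\mathfrak{r}^{\Sscript{A}}_{\Sscript{M}}$ follows because any two objects of $\hH$ are locally isomorphic (condition $(ii)$ of Theorem \ref{thm:A}), so the rank cannot jump between connected fibres. The cleanest packaging is to observe that the rank function $\mathfrak{r}^{\Sscript{A}}_{\Sscript{M}}$ is locally constant (as $M$ is finitely generated projective) and that the argument of Proposition \ref{prop:DFR} forces it to agree with the fibre dimension $n$ at every point, hence to be globally constant. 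The mild obstacle is bookkeeping: one must ensure that a single value $n$ (read off at one $\Bbbk$-point) genuinely controls \emph{all} primes of $A$ and not merely the $\Bbbk$-rational ones; this is exactly what Proposition \ref{prop:DFR} supplies, by allowing the second object $y$ to live over an arbitrary algebra $T$, so that every prime of $A$ is reached as $\varphi_{\Sscript{*}}(\fk{a})$ for a suitable faithfully flat $\varphi$. Thus the corollary reduces to a direct citation of Proposition \ref{prop:DFR} and Lemma \ref{lema: DFR}, with $A(\Bbbk)\neq\emptyset$ furnishing the anchoring finite-dimensional fibre.
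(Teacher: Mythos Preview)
Your proposal is correct and follows the paper's (implicit) approach: the paper gives no proof at all for this corollary, simply presenting it as an immediate consequence of Proposition \ref{prop:DFR}. Your identification of the key ingredients --- Lemma \ref{lema:dualizable} for finite projectivity, a $\Bbbk$-point to anchor the rank, and Proposition \ref{prop:DFR} to propagate it --- is exactly right.

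That said, your argument for the global statement is more circuitous than it needs to be. You spend some effort trying to reach ``every prime of $A$'' via auxiliary faithfully flat maps, but Proposition \ref{prop:DFR} already handles this in one stroke: simply take $T = A$ and $y = \id_{A} \in A(A)$, so that $M_{\Sscript{y}} = M\tensor{A}A \cong M$. Since $M_{\Sscript{x}}$ (with $x \in A(\Bbbk)$, $S = \Bbbk$) is locally free of constant rank $n$ over $\Bbbk$, Proposition \ref{prop:DFR} applied to the pair $(x, \id_{A})$ gives immediately that $M$ is locally free of constant rank $n$ over $A$. No prime-by-prime bookkeeping is required.
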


\noindent\textbf{Aknowledgements:} The author would like to thank the referee for  her/his thorough review and highly appreciate the comments and 
suggestions, which significantly contributed to improving the paper.

\end{document}